\newcommand{\hide}[1]{}
\numberwithin{equation}{section}
\def\Re{{\sf Re}\,}
\newcommand{\D}{\mathbb D}
\newcommand{\R}{\mathbb R}
\newcommand{\Z}{\mathbb Z}
\newcommand{\C}{\mathbb C}
\newcommand{\Aut}{{\sf Aut}(\mathbb D)}
\newcommand{\N}{\mathbb N}
\def\Re{{\sf Re}\,}
\newcommand{\Jacobi}{\mathtt{P}}
\newcommand{\F}{{}_2F_1}
\newcommand{\Fz}[3]{{}_2F_1\left(#1;#2;#3\right)}
\def\Aut{{\sf Aut}}
\def\Re{{\sf Re}\,}
\def\Re{{\sf Re}\,}
\def\1#1{\overline{#1}}
\def\2#1{\widetilde{#1}}
\def\3#1{\widehat{#1}}
\def\4#1{\mathbb{#1}}
\def\5#1{\frak{#1}}
\def\6#1{{\mathcal{#1}}}
\def\Re{{\sf Re}\,}
\newcommand{\mcite}[1]{\csname b@#1\endcsname}
\newtheoremstyle{break}
{8pt}{8pt}%
{\itshape}{}%
{\bfseries}{}%
{\newline}{}%
\theoremstyle{break}
\theoremstyle{theorem}
\def\Aut{{\sf Aut}}
\def\Re{{\sf Re}\,}
\newcommand{\cc}[1]{\overline{{#1}}}
\newcommand{\del}{\mathop{}\!\partial}
\newcommand{\rotdiagonal}{\mathrm{L}_{\widehat{\C}}}
\newcommand{\diagonal}{\mathrm{L}_{\D}}
\theoremstyle{break}
\newtheorem{theorem}{Theorem}[section]
\newtheorem*{theorem*}{Theorem}
\newtheorem{lemma}[theorem]{Lemma}
\newtheorem*{lemma*}{Lemma}
\newtheorem{proposition}[theorem]{Proposition}
\newtheorem{corollary}[theorem]{Corollary}
\theoremstyle{break}
\newtheorem{definition}[theorem]{Definition}
\theoremstyle{remark}
\newtheorem{remark}{Remark}
\numberwithin{equation}{section}
\newcommand*\pFq[6][8]{%
	\begingroup 
	\pFqmuskip=#1mu\relax
	\mathchardef\normalcomma=\mathcode`,
	\mathcode`\,=\string"8000
	\begingroup\lccode`\~=`\,
	\lowercase{\endgroup\let~}\pFqcomma
	{}_{#2}F_{#3}{\left[\genfrac..{0pt}{}{#4}{#5};#6\right]}%
	\endgroup
}
\newcommand{\pFqcomma}{{\normalcomma}\mskip\pFqmuskip}
\newcommand*\FF[3][8]{%
	\begingroup 
	\pFqmuskip=#1mu\relax
	\mathchardef\normalcomma=\mathcode`,
	\mathcode`\,=\string"8000
	\begingroup\lccode`\~=`\,
	\lowercase{\endgroup\let~}\pFqcomma
	{}_{3}F_{2}{\left[\genfrac..{0pt}{}{#2}{#3}\right]}%
	\endgroup
}
\newcommand{\FFz}[3]{\pFq{3}{2}{#1}{#2}{#3}}
\def\blfootnote{\xdef\@thefnmark{}\@footnotetext}
\author[A.~Moucha]{Annika Moucha$^\dag$}
\address{A. Moucha: Department of Mathematics, University of W\"urzburg, Emil Fischer Strasse 40, 97074, W\"urzburg, Germany.} \email{annika.moucha@uni-wuerzburg.de}
\title{Spectral synthesis of the invariant Laplacian and complexified spherical harmonics}
\thanks{$^\dag\,$Partially supported by the Alexander von Humboldt Stiftung}
\begin{document}

	\maketitle


	\blfootnote{2020 \textit{Mathematics Subject Classification.} Primary 30H50; Secondary 46A35, 35P10, 33C20, 30F45}
	\blfootnote{\textit{Key words and phrases.} Schauder basis, Spectral decomposition, Spherical harmonics, Combinatorial identity, Spherical and hyperbolic Laplacian}

	\begin{abstract}
		We show that the space $\mathcal{H}(\Omega)$ of holomorphic functions $F:\Omega\to\C$, where ${\Omega=\{(z,w)\in\widehat{\C}^2\,:\, z\cdot w\neq 1\}}$, possesses an orthogonal Schauder basis consisting of distinguished eigenfunctions of the	canonical Laplacian on $\Omega$. Mapping $\Omega$ biholomorphically onto the complex two-sphere, we use the Schauder basis result in order to identify the classical three-dimensional spherical harmonics as restrictions of the elements in $\mathcal{H}(\Omega)$ to the real two-sphere analogue in $\Omega$. In particular, we show that the zonal harmonics correspond to those functions in $\mathcal{H}(\Omega)$ that are invariant under automorphisms of $\Omega$ induced by M\"obius transformations. The proof of the Schauder basis result is based on a curious combinatorial identity which we prove with the help of generalized hypergeometric functions.
	\end{abstract}

	\section{Introduction}\label{sec:intro}\

	Let $\widehat{\C}:=\C\cup\{\infty\}$ be the \emph{Riemann sphere} and define\footnote{We use the convention $\infty\cdot0=0\cdot\infty=1$ and $\infty\cdot z=z\cdot\infty=\infty$ for $z\in\widehat{\C}\setminus\{0\}$.}
	\begin{equation}\label{eq:Omega}
		\Omega:=\{(z,w)\in\widehat{\C}^2\,:\, z\cdot w\neq 1\}\,.
	\end{equation}
	The set $\Omega$ encompasses $\widehat{\C}$ and the \emph{open unit disk} $\D$ in a natural way: there is a one-to-one correspondence between $\widehat{\C}$ and
	the ``rotated diagonal'' in $\Omega$, that is
	\begin{equation}\label{eq:rotdiagonal}
		\rotdiagonal:=\{(z,-\cc{z})\,:\, z\in\widehat{\C}\}\,,
	\end{equation}
	as well as between $\D$	and the ``(half)  diagonal'' in $\Omega$, that is
	\begin{equation}\label{eq:diagonal}
		\diagonal:=\{(z,\cc{z})\,:\, z\in\D\}\,.
	\end{equation}
	\begin{figure}[t]
		\centering
		\includegraphics[width = 0.9\textwidth]{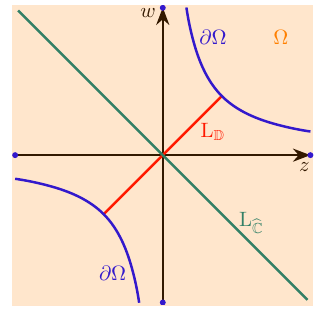}
		\caption{Schematic picture of the domain $\Omega$}
		\label{fig:Omega}
	\end{figure}Consequently, every function $F:\Omega\to\C$ gives rise to its ``shadows'' on $\widehat{\C}$ and $\D$, i.e.\ to the functions
	\[F\big\vert_{\rotdiagonal}:\widehat{\C}\rightarrow\C\quad\text{and}\quad F\big\vert_{\diagonal}:\D\rightarrow\C\,.\]
	If we additionally assume that $F$ is a holomorphic function, i.e.\ $F$ belongs to the set
	\begin{equation}
		\mathcal{H}(\Omega):=\{F:\Omega\rightarrow\C\,:\, F\text{ is holomorphic}\}\,,
	\end{equation}
	then its ``shadows'' ${F\vert}_{\rotdiagonal}$ resp.\ ${F\vert}_{\diagonal}$ belong to $C^\infty(\widehat{\C})$ resp.\ $C^\infty(\D)$, by which we denote the spaces of smooth, that is infinitely real differentiable, functions on $\widehat{\C}$ resp.\ on $\D$.  In fact, this way we even establish a one-to-one correspondence between $\mathcal{H}(\Omega)$ and its ``shadow spaces''
	\begin{align}\label{eq:starproductC}
		\mathcal{A}(\widehat{\C})&:=\{f\in C^\infty(\widehat{\C})\,:\, f={F\vert}_{\rotdiagonal} \text{ for some }F\in\mathcal{H}(\Omega)\}\quad\text{and}\\
		\mathcal{A}(\D)&:=\{f\in C^\infty(\D)\,:\, f={F\vert}_{\diagonal} \text{ for some }F\in\mathcal{H}(\Omega)\}\,.\label{eq:starproductD}
	\end{align}
	Indeed, by a variant of the identity principle (see \cite[p.~18]{Range}), if $U \subseteq \C^2$ is a domain containing a point of the form $(z,-\cc{z})$ or $(z,\cc{z})$ and ${F,G: U \to \C}$ are holomorphic and agree on $\rotdiagonal\cap U$ or $\diagonal\cap U$, then $F\equiv G$ on $U$. Further, the identification of elements in $\mathcal{H}(\Omega)$ with its ``shadows'' allows us to equip $\mathcal{A}(\widehat{\C})$ and $\mathcal{A}(\D)$ with the topology induced by the natural topology of locally uniform convergence on $\mathcal{H}(\Omega)$. This way, $\mathcal{A}(\widehat{\C})$ and $\mathcal{A}(\D)$ become Fr\'echet spaces.

	\medskip

	The ``shadow spaces'' $\mathcal{A}(\widehat{\C})$ and $\mathcal{A}(\D)$ appear in recent work on strict deformation quantization of $\widehat{\C}$ resp.\ $\D$ by Esposito, Schmitt, Waldmann \cite{EspositoSchmittWaldmann2019} resp.\ Kraus, Roth, Sch\"otz, Waldmann \cite{KrausRothSchoetzWaldmann2019}, see also \cite{BeiserWaldmann2014,KrausRothSchleissinger,KrausRothSchoetzWaldmann2019,SchmittSchoetz2022}. Roughly speaking, the goal in \cite{EspositoSchmittWaldmann2019} resp.\ \cite{KrausRothSchoetzWaldmann2019} is the construction of a so-called star product, a distinguished non-commutative product, on $\mathcal{A}(\D)$ resp.\ $\mathcal{A}(\widehat{\C})$. It turns out that, in this context, a pivotal tool are basic invariant differential operators on $C^\infty(\widehat{\C})$ and $C^\infty(\D)$, the so-called Peschl-Minda operators, which were already studied by Peschl \cite{Peschl1955}, Minda \cite{Minda}, Kim and Sugawa \cite{KS07diff} and others, see e.g.\ \cite{Ahar69,Harmelin1982,Schippers2003,Schippers2007,KS09}. This was recently realized by Heins, Roth, Sugawa and the author \cite{HeinsMouchaRoth2}. In particular, they show that these Peschl-Minda operators are, also, ``shadows'' of invariant differential operators on $\mathcal{H}(\Omega)$. This gives a partial explanation for the appearance of $\mathcal{H}(\Omega)$ in strict deformation quantization of $\widehat{\C}$ resp.~$\D$: the Peschl-Minda operators (on $\mathcal{H}(\Omega)$) and the corresponding star product on $\mathcal{H}(\Omega)$ (see \cite[Th.~6.6]{HeinsMouchaRoth2}) provide a unifying framework for the (real) theories of $\mathcal{A}(\widehat{\C})$ and $\mathcal{A}(\D)$.

	\medskip

	Another fruitful connection between $\mathcal{H}(\Omega)$ with its ``shadow spaces'' and smooth functions is given by the spectral theory of the \emph{canonical Laplace operator $\Delta_{zw}$ on $\Omega$} which is defined by
	\begin{equation}\label{eq:Laplacezw}
		\Delta_{zw}
		F(z,w)
		\coloneqq
		4(1-zw)^2
		\del_z \del_w
		F(z,w)\,,\qquad (z,w)\in\Omega
	\end{equation}
	for $F\in\mathcal{H}(\Omega)$:
	the Laplacian $\Delta_{zw}$ is closely related to the \emph{spherical and hyperbolic Laplacians}
	\begin{align}\label{eq:sphericalLaplaceIntro}
		\Delta_{\widehat{\C}} f(z):=\left(1+|z|^2 \right)^2 \partial_z\partial_{\cc{z}} f(z) \, , \quad z \in \C \, ,\\
		\Delta_{\D} f(z):=\left(1-|z|^2 \right)^2 \partial_z\partial_{\cc{z}} f(z) \, , \quad z\in \D \,,\label{eq:hyperbolicLaplaceIntro}
	\end{align}
	acting on $C^\infty(\widehat{\C})$ resp.\ $C^\infty(\D)$. More precisely, given a holomorphic eigenfunction of $\Delta_{zw}$, its ``shadows'' on the (rotated) diagonal are smooth eigenfunctions of $\Delta_{\widehat{\C}}$ and $\Delta_{\D}$. Motivated by work of Rudin \cite{Rudin84} on the structure of the eigenspaces of $\Delta_{\D}$ with regard to invariance under unit disk automorphisms, Heins, Roth and the author \cite{HeinsMouchaRoth3} investigate the function-theoretical approach described above. Using real methods, Rudin gave a classification of the invariant eigenspaces of $\Delta_{\D}$ by distinguishing between exceptional and non-exceptional eigenvalues. In \cite{HeinsMouchaRoth3} it is shown that in the exceptional case there always exists a finite dimensional subspace such that the eigenfunctions in this particular space admit a ``holomorphic extension'' to $\Omega$, i.e.\ belong to $\mathcal{A}(\D)$. In fact, this ``extension'' is natural in the sense that $\Omega$ is indeed the maximal domain where these functions are holomorphic (see \cite[Def.~2.4]{HeinsMouchaRoth3}). The crucial tool for this purpose is the identification of specific eigenfunctions, so-called \emph{Poisson Fourier modes}, which were already implicitly used in \cite{Rudin84}.

	\medskip

	The purpose of the present paper is a deeper analysis of $\mathcal{H}(\Omega)$ and its offspring, $\mathcal{A}(\widehat{\C})$ and $\mathcal{A}(\D)$. We deal with the following two aspects: first, we identify the Poisson Fourier modes as building blocks of $\mathcal{H}(\Omega)$. Second, based on the first aspect, we provide a complex analytic framework for the spectral theory of the spherical Laplacian with an emphasis on the three-dimensional spherical harmonics. Altogether, our results give an intrinsic description of $\mathcal{A}(\widehat{\C})$ and $\mathcal{A}(\D)$ in terms of the exceptional (in the sense of Rudin \cite{Rudin84}) eigenspaces of the natural Laplacians on $\widehat{\C}$ and $\D$: our results link the spaces $\mathcal{A}(\widehat{\C})$ and $\mathcal{A}(\D)$ precisely with the set of exceptional eigenfunctions singled out by Rudin as well as with the space of three-dimensional spherical harmonics.

	\medskip

	More specifically, the first objective of this paper is to show that the Poisson Fourier modes form a Schauder basis of $\mathcal{H}(\Omega)$, or, in other words, $\mathcal{H}(\Omega)$ admits a Schauder basis consisting of exceptional eigenfunctions. Our proof will use the fact that another Schauder basis of $\mathcal{H}(\Omega)$ is already known (see \cite[Th.~3.16]{KrausRothSchoetzWaldmann2019} and \cite[Prop.~6.4]{SchmittSchoetz2022}, see also \cite{HeinsMouchaRoth1} for a more conceptual approach). This enables us to perform a ``change of Schauder basis'' whose key ingredient is the verification of a curious combinatorial identity. For the latter, some rather deep results about hypergeometric functions based on identities obtained by Whipple \cite{whipple} will be required.

	\medskip

	The second objective of this paper is a function-theoretic description of the spectral theory of $\Delta_{\widehat{\C}}$ by developing a version of \emph{complexified spherical harmonics} based on $\mathcal{H}(\Omega)$. This is motivated by two observations: first, the (point) spectrum of $\Delta_{zw}$ and $\Delta_{\widehat{\C}}$ equal each other.\footnote{Here, we interpret $\Delta_{zw}$ as an operator acting on $\mathcal{H}(\Omega)$. Then, given any eigenfunction $f\in C^\infty(\widehat{\C})$ of $\Delta_{\widehat{\C}}$ for the eigenvalue $4m(m+1)$, where $m$ is a non-negative integer, there is a unique eigenfunction $F\in\mathcal{H}(\Omega)$ of $\Delta_{zw}$ to the same eigenvalue such that $F$ restricted to $\rotdiagonal$ equals $f$, see \cite[Th. 2.5]{HeinsMouchaRoth3}. As operator on $\mathcal{H}(\D\times\D)$, the (point) spectrum of $\Delta_{zw}$ is $\C$ which is in accordance with $\Delta_\D$ and allows for a similar result, see \cite[Th. 2.2]{HeinsMouchaRoth3}.}	Second, $\Omega$ is biholomorphically equivalent to the \emph{complex two-sphere}
	\begin{equation}\label{eq:complextwosphere}
		\mathbb{S}_\C^2 := \left\{\mathbf{z}=(z_1,z_2,z_3) \in \C^3 \,\big|\, z_1^2 + z_2^2 + z_3^2 = 1\right\}
	\end{equation}
	whereas the rotated diagonal $\rotdiagonal$, which, in our picture, encodes the Riemann sphere, translates to
	the \emph{real two-sphere}
	\[\mathbb{S}_\R^2 := \left\{\mathbf{x}=(x_1,x_2,x_3) \in \R^3 \,\big|\, x_1^2 + x_2^2 + x_3^2 = 1\right\} \,.\]
	Since this is the natural domain of the space of classical spherical harmonics in three dimensions, we will be able to identify this space with $\mathcal{A}(\widehat{\C})$. Furthermore, the invariance property of $\Delta_{zw}$ with respect to automorphisms on $\Omega$ induced by M\"obius transformations will allow us to include (complexified) zonal harmonics in our approach.

	\medskip

	This paper is organized as follows: first, in Section~\ref{sec:notation} we provide some notations. In Section~\ref{sec:PFM} we introduce the Poisson Fourier modes (PFM) and formulate our first main result, Theorem~\ref{thm:SchauderbasisIntro}, which states that the PFM form a Schauder basis of $\mathcal{H}(\Omega)$. Section~\ref{sec:Jacobi} connects the PFM to Jacobi polynomials and their orthogonality relations and Section~\ref{sec:PFMSchauderBasis} gives a proof of Theorem~\ref{thm:SchauderbasisIntro}. Then, we proceed with developing complexified spherical harmonics in Section~\ref{sec:CSH}. We describe the sphere model of $\Omega$ in Section~\ref{sub:complexsphere}, give a short introduction to classical spherical harmonics in Section~\ref{sub:RSHtoCSH} which then allows us to achieve our second goal: define and investigate complexified spherical harmonics in Section~\ref{sub:CSH}. Further, Section~\ref{sub:zonalharmonic} introduces the M\"obius group of $\Omega$ in Section~\ref{sec:pullbackformula} and describes the $\Omega$-approach to zonal harmonics in Section \ref{sub:zonal}. Finally, Section~\ref{sec:coefficients} deals with the verification of the combinatorial identity crucial for the proof of Theorem~\ref{thm:SchauderbasisIntro}.

	\section{Notation}\label{sec:notation}

	We denote the open unit disk by $\D:=\{z\in\C\,:\,|z|<1\}$, the punctured plane by $\C^* \coloneqq \C \setminus \{0\}$, the Riemann sphere by $\widehat{\C}$ and the punctured sphere by $\widehat{\C}^* \coloneqq \widehat{\C} \setminus \{0\}$. Moreover, we write $\N \coloneqq \{1, 2, \ldots\}$ for the set of positive integers, $\N_0 \coloneqq \N \cup \{0\}$ and $\Z$ for the set of all integers. For an open subset $U$ of $\widehat{\C}$ or $\widehat{\C}^2$, we write $\partial U$ for its boundary and $\cc{U}$ for its closure. The set of all infinitely (real) differentiable functions  ${f:U\to\C}$ is denoted $C^\infty(U)$, the set of all holomorphic functions ${F:U\to\C}$ is denoted $\mathcal{H}(U)$ and $\Aut(U)$ is the set of all biholomorphic mappings or automorphisms $T : U \to U$. Further, we write $\partial_z$ and $\partial_w$ resp.\ $\partial_{\cc{z}}$ for the complex Wirtinger derivatives for functions of two resp.\ one complex variable. Finally, note that the set $\Omega$ from \eqref{eq:Omega} is a complex manifold of complex dimension $2$ and an open submanifold of $\widehat{\C}^2$.

	Further, we will use the \emph{hypergeometric function} $\F$, which is defined as the power series
	\begin{equation}\label{eq:hyp2F1}
		\Fz{a,b}{c}{t}:=\sum_{k=0}^\infty\frac{(a)_k(b)_k}{(c)_k}\frac{t^k}{k!}\,,\qquad \quad t\in\D\,,\;a,b,c\in\C,\;-c\not\in\N_0
	\end{equation}
	where $(a)_k:=a\cdot(a+1)\cdots(a+k-1)$	denotes the \emph{(rising) Pochhammer symbol}.

	\section{Poisson Fourier modes}\label{sec:PFM}

	We now introduce the functions of interest of the present work:
	\begin{definition}
		Let $m\in\N_0$ and $n\in\Z$. The \emph{$n$-th Poisson Fourier mode (PFM) of its $(-m)$-th power} is the function $	P_n^{-m}:\Omega\rightarrow\C$ given by
		\begin{equation}
			\label{eq:PoissonFourier}
		 P_n^{-m}(z,w)
			:=(-1)^n\binom{m}{|n|}(1-zw)^{-m}\Fz{-m,|n|-m}{|n|+1}{zw}\cdot\begin{cases}
				w^n\quad &\textrm{if }n\geq0\,,\\
				z^{|n|}\quad &\textrm{if }n\leq0\,.
			\end{cases}
		\end{equation}
		Here, we define $\binom{m}{|n|}:=0$, if $|n|>m$.
	\end{definition}
	\begin{remark}\label{rem:PFMIntro}
		\begin{enumerate}[(a)]
			\item The name stems from the characterization as Fourier coefficients of powers of a generalized Poisson kernel, i.e. using \cite[Sec. 2.5.1, Formula (10), p. 81]{Erdelyi1953} it holds that
			\begin{equation}
				P_n^{-m}(z,w)
				=
				\frac{1}{2\pi}
				\int\limits_0^{2\pi}
				\left(\frac{1-zw}{(1-ze^{-it})(1-we^{it})}\right)^{-m}
				e^{-int}
				\, dt \, ,\qquad z,w\in\D\, .
			\end{equation}
			In fact, this characterization serves as the definition of the PFM in \cite[Def. 4.1]{HeinsMouchaRoth3}. However, for our purpose the interpretation of $P_n^{-m}$ as hypergeometric function is more important. Further note that in \cite{HeinsMouchaRoth3} PFM of arbitrary complex powers $\mu$ are considered. However, we do not need this generality.

			\item Note that if $|n| > m$, then $P_{n}^{-m} = 0$. Otherwise, using \eqref{eq:hyp2F1}, $P_{n}^{-m}$ is a rational function:
			\begin{equation}
				\label{eq:PoissonFourierExplicit}
				P_{n}^{-m}(z,w)
				=
				(-1)^n
				\sum_{k=0}^{m-n}
				\binom{m}{k + n}
				\binom{m}{k}
				\frac{z^{k} w^{k+n}}{(1 - z w)^m}
				=
				P_{-n}^{-m}(w,z)\qquad (n\geq0)\,.
			\end{equation}

			\item Since the PFM are symmetric in the sense that $P_{n}^{-m}(z,w)=P_{-n}^{-m}(w,z)$, we can simplify our proofs in the following: we will often prove identities for $P^{-m}_{n}$ only which then implies the corresponding result for $P^{-m}_{-n}$.
		\end{enumerate}
	\end{remark}
	The specific form of the $\F$ function in \eqref{eq:PoissonFourier} allows us to relate the PFM to the \emph{Jacobi polynomials} (we will make this precise in Section \ref{sec:Jacobi}) which are orthogonal polynomials. As a consequence, the PFM are mutually orthogonal (in sense of Proposition~\ref{prop:Jacobiorthogonality}), too.
	\begin{proposition}\label{prop:Jacobiorthogonality}
		Let $m,p\in\N_0$, $n,q\in\Z$. The Poisson Fourier modes fulfil the orthogonality property
		\begin{equation}\label{eq:Jacobiorthogonality}
			\frac{i}{\pi}\int\limits_{\widehat{\C}} P_n^{-m}(z,-\cc{z})\cc{P_q^{-p}(z,-\cc{z})}\frac{dz\, d\cc{z}}{(1+|z|^2)^2}=\frac{\binom{m}{|n|}\delta_{m,p}\delta_{n,q}}{(2m+1)\binom{m+|n|}{|n|}}\, .
		\end{equation}
	Here, $\delta_{m,p}:=1$ resp.\ $\delta_{n,q}:=1$ if $m=p$ resp.\ $n=q$ and zero otherwise.
	\end{proposition}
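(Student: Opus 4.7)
The plan is to pass to polar coordinates $z=re^{i\theta}$ and decouple the integral into an angular part (which yields $\delta_{n,q}$ by orthogonality of complex exponentials) and a radial part (which, after a change of variables, reduces to classical Jacobi orthogonality and yields $\delta_{m,p}$ together with the explicit constant on the right-hand side of~\eqref{eq:Jacobiorthogonality}).

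First, evaluating the PFM at $w=-\cc z$ one has $zw=-r^2$, so $(1-zw)^{-m}=(1+r^2)^{-m}$, while the monomial factor $w^n$ (if $n\geq 0$) or $z^{|n|}$ (if $n\leq 0$) supplies the angular dependence $e^{-in\theta}$. A direct computation in both cases gives
\[
P_n^{-m}(z,-\cc z)=c_n\binom{m}{|n|}r^{|n|}(1+r^2)^{-m}\,\F\!\left(-m,|n|-m;|n|+1;-r^2\right)e^{-in\theta}
\]
with $c_n\in\{\pm 1\}$ (these signs are harmless since $c_n c_q=c_n^2=1$ whenever $n=q$). Because $\F$-values are real on the real axis, the conjugate of $P_q^{-p}(z,-\cc z)$ contributes only the angular factor $e^{iq\theta}$, and the $\theta$-integration produces $\int_0^{2\pi}e^{i(q-n)\theta}\,d\theta=2\pi\delta_{n,q}$, yielding the Kronecker symbol $\delta_{n,q}$.

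Next, with $n=q$, I would handle the remaining radial integral by the substitution $\xi=(1-r^2)/(1+r^2)\in(-1,1]$, so that $r^2/(1+r^2)=(1-\xi)/2$. Applying Pfaff's transformation
\[
\F\!\left(-m,|n|-m;|n|+1;-r^2\right)=(1+r^2)^{m-|n|}\,\F\!\left(m+|n|+1,|n|-m;|n|+1;\tfrac{1-\xi}{2}\right),
\]
the right-hand side is, up to the combinatorial prefactor $\tfrac{(m-|n|)!\,|n|!}{m!}$, the ultraspherical Jacobi polynomial $P^{(|n|,|n|)}_{m-|n|}(\xi)$. This identification, which is the substance of Section~\ref{sec:Jacobi}, is the key structural step. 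The excess factors $(1+r^2)^{m-|n|}$ and $(1+r^2)^{p-|n|}$ coming from the two PFMs cancel the $(1+r^2)^{-(m+p)}$ in the integrand, while the Jacobian of the substitution converts $r^{2|n|+1}(1+r^2)^{-2|n|-2}\,dr$ into the Jacobi weight $(1-\xi^2)^{|n|}$ up to an explicit power of $2$.

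What remains is a constant multiple of $\int_{-1}^{1}(1-\xi^2)^{|n|}\,P_{m-|n|}^{(|n|,|n|)}(\xi)\,P_{p-|n|}^{(|n|,|n|)}(\xi)\,d\xi$, which by classical Jacobi orthogonality with $\alpha=\beta=|n|$ vanishes unless $m=p$ and otherwise equals $\frac{2^{2|n|+1}(m!)^2}{(2m+1)(m-|n|)!(m+|n|)!}$. Combining this with the collected prefactors and simplifying via $\binom{m}{|n|}\cdot\tfrac{(m-|n|)!\,|n|!}{m!}=1$ and $\frac{\binom{m}{|n|}}{\binom{m+|n|}{|n|}}=\frac{(m!)^2}{(m-|n|)!(m+|n|)!}$ reproduces the right-hand side of~\eqref{eq:Jacobiorthogonality}. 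The main (though purely computational) obstacle is the careful tracking of powers of $2$, binomial coefficients, and Pochhammer symbols through the Pfaff substitution and Jacobi orthogonality formula to confirm that the final normalising constant is indeed $\binom{m}{|n|}/\bigl((2m+1)\binom{m+|n|}{|n|}\bigr)$.
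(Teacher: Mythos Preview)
Your proposal is correct and follows essentially the same approach as the paper's proof: both pass to polar coordinates, use angular orthogonality for $\delta_{n,q}$, identify the radial factor with an ultraspherical Jacobi polynomial (the paper via Lemma~\ref{lem:Jacobipolynomials}, you by applying Pfaff's transformation directly, which is effectively inlining that lemma), and then reduce to the classical Jacobi orthogonality relation~\eqref{eq:Jacobiintegral} via the substitution $\xi=\tfrac{1-r^2}{1+r^2}$ (the paper uses $x=\tfrac{s-1}{s+1}=-\xi$ with $s=r^2$, which is immaterial since $\alpha=\beta$).
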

	 By definition, the functions $P_n^{-m}$ are $-n$-homogeneous, that is
	\begin{equation}\label{eq:nhomogeniety}
		P_n^{-m}
		\left(\xi z, \frac{w}{\xi}\right)
		=
		\xi^{-n}
		P_n^{-m}(z,w)
		\qquad
		z,w \in \D\,, \; \xi\in\partial\D
		\, .
	\end{equation}
	Moreover, the PFM are holomorphic on $\Omega$ by \eqref{eq:PoissonFourierExplicit}. These two properties are essential for the significant role that the PFM play for the spectral theory of the canonical Laplace operator~$\Delta_{zw}$ on $\Omega$ from \eqref{eq:Laplacezw}.	Given $\lambda\in\C$ we denote the \emph{$\lambda$-eigenspace of $\Delta_{zw}$ on $\Omega$} by
	\begin{equation}\label{eq:eigenspaceIntro}
		X_\lambda(\Omega):=\{F\in\mathcal{H}(\Omega)\,:\, \Delta_{zw}F=\lambda F\}\,.
	\end{equation}
	A computation shows that $P_n^{-m}\in X_{4m(m+1)}(\Omega)$ for every $m\in\N_0$. Moreover, every $(-n)$-homogeneous function in $X_{4m(m+1)}(\Omega)$ is (up to a multiplicative constant) given by $P_n^{-m}$ (see \cite[Th. 4.2 combined with (4.18)]{HeinsMouchaRoth3}), and the set $\{P_n^{-m}\,:\, |n|\leq m\}$ forms a basis of $X_{4m(m+1)}(\Omega)$ (see \cite[Th. 8.1]{HeinsMouchaRoth3}). In combination with Proposition \ref{prop:Jacobiorthogonality} this implies the following result.
	\begin{corollary}\label{cor:eigenspaceHilbertspaceIntro}
			Define the map $\langle\cdot{,}\cdot\rangle_\Omega:\mathcal{H}(\Omega)\times \mathcal{H}(\Omega)$ by
			\begin{equation}\label{eq:scalarproductonOmegaIntro}
				\langle F,G\rangle_\Omega:=\frac{i}{\pi}\int\limits_{\widehat{\C}} F(z,-\cc{z})\cc{G(z,-\cc{z})}\frac{dz\,d\cc{z}}{(1+|z|^2)^2}\,.
			\end{equation}
			Let $m\in\N_0$. Then $(X_{4m(m+1)}(\Omega),\langle\cdot{,}\cdot\rangle_\Omega)$ is a Hilbert space with orthogonal basis $ {\{P_{n}^{-m}\,:\, |n|\leq m\}}$.
		\end{corollary}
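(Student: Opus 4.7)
The plan is to reduce everything to the two ingredients already stated: the orthogonality relation in Proposition~\ref{prop:Jacobiorthogonality} and the cited fact from \cite[Th.~8.1]{HeinsMouchaRoth3} that $\{P_{n}^{-m}:|n|\le m\}$ is a basis of $X_{4m(m+1)}(\Omega)$. In particular $X_{4m(m+1)}(\Omega)$ has finite dimension $2m+1$, so any well-defined inner product on it will automatically turn it into a Hilbert space.

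First I would check that the sesquilinear form $\la\cdot,\cdot\ra_\Omega$ in \eqref{eq:scalarproductonOmegaIntro} makes sense on $X_{4m(m+1)}(\Omega)$: every $F,G\in X_{4m(m+1)}(\Omega)$ can be written as finite sums $F=\sum_{|n|\le m}c_n P_{n}^{-m}$ and $G=\sum_{|q|\le m}d_q P_{q}^{-m}$, and by bilinearity the defining integral decomposes into a finite sum of integrals of the type appearing on the left-hand side of \eqref{eq:Jacobiorthogonality}, each of which is finite by Proposition~\ref{prop:Jacobiorthogonality}. Using that proposition with $p=m$ directly, one obtains the explicit formula
\[
\la F,G\ra_\Omega\;=\;\sum_{|n|\le m} c_n\,\overline{d_n}\,\frac{\binom{m}{|n|}}{(2m+1)\binom{m+|n|}{|n|}}.
\]

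Next, I would verify the axioms of an inner product. Conjugate symmetry and sesquilinearity are immediate from \eqref{eq:scalarproductonOmegaIntro}, and the explicit formula above yields
\[
\la F,F\ra_\Omega\;=\;\sum_{|n|\le m}|c_n|^2\,\frac{\binom{m}{|n|}}{(2m+1)\binom{m+|n|}{|n|}}\,\ge\,0,
\]
with equality if and only if $c_n=0$ for every $|n|\le m$, i.e.\ $F\equiv 0$; here I use that $\binom{m}{|n|}>0$ in the allowed range. Alternatively, positive definiteness can be argued directly: if $\la F,F\ra_\Omega=0$ then continuity forces $F|_{\rotdiagonal}\equiv 0$, and the variant of the identity principle recalled in Section~\ref{sec:intro} implies $F\equiv 0$ on $\Omega$. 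The same formula shows orthogonality of the basis vectors: distinct PFM produce vanishing cross terms.

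Finally, since $(X_{4m(m+1)}(\Omega),\la\cdot,\cdot\ra_\Omega)$ is an inner product space of finite dimension $2m+1$, it is automatically complete, hence a Hilbert space, and $\{P_{n}^{-m}:|n|\le m\}$ is an orthogonal basis with squared norms read off Proposition~\ref{prop:Jacobiorthogonality}. There is essentially no obstacle here; the entire statement is a packaging of Proposition~\ref{prop:Jacobiorthogonality} plus the known basis property, where finite-dimensionality trivializes both the convergence of \eqref{eq:scalarproductonOmegaIntro} on this subspace and the completeness requirement.
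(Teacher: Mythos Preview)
Your proposal is correct and matches the paper's approach: the paper simply states that the corollary follows from the cited basis property of $\{P_n^{-m}:|n|\le m\}$ together with Proposition~\ref{prop:Jacobiorthogonality}, and you have spelled out exactly these details, including the alternative positive-definiteness argument via the identity principle that the paper records in Remark~\ref{rem:scalarproductIntro}.
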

		\begin{remark}\label{rem:scalarproductIntro}
			By our identification of $\widehat{\C}$ with the rotated diagonal $\rotdiagonal\subseteq\Omega$ (see \eqref{eq:rotdiagonal}) we see that \eqref{eq:scalarproductonOmegaIntro} is well-defined as integral of continuous functions over a compact set. Further note that \eqref{eq:scalarproductonOmegaIntro} already defines an inner product on $\mathcal{H}(\Omega)$. This will follow from Theorem \ref{thm:SchauderbasisIntro} below. Alternatively, one can argue directly with a variant of the identity principle (\cite[p.~18]{Range}): if a holomorphic function $F : U \to \C$ on a domain $U \subseteq \C^2$ containing a point of the form $(z,-\cc{z})$ vanishes on $\rotdiagonal\cap U$, then $F\equiv0$ on $U$.
	\end{remark}
	In Remark 15 in \cite{HeinsMouchaRoth3} it is explained that the closure (w.r.t.\ the compact open topology of $\Omega$) of the linear hull of all PFM equals $\mathcal{H}(\Omega)$. Having this in mind, the first main goal of this paper shows that even more is true:
	\begin{theorem}\label{thm:SchauderbasisIntro}
		The set $\{P_{n}^{-m} \, : \, m\in\N_0,\, n\in\Z,\,|n|\leq m\}$ is a Schauder basis of $\mathcal{H}(\Omega)$. In particular, each $F \in \mathcal{H}(\Omega)$ has a unique representation as
		\begin{equation}\label{eq:PFMasSchauder}
			F(z,w)=\sum \limits_{m=0}^{\infty}\sum\limits_{n=-m}^{m} c_{n,m} P_n^{-m}(z,w) \, .
		\end{equation}
		This series converges absolutely and locally uniformly in $\Omega$, and the Schauder coefficients $c_{\pm n,m}$ ($n\geq0$) of $F$ are given by
		\begin{equation}\label{eq:Schaudercoefficient}
			c_{\pm n,m}=(2m+1)\binom{m+n}{n}\binom{m}{n}^{-1}\langle F,P_{\pm n}^{-m}\rangle_\Omega\,.
		\end{equation}
	\end{theorem}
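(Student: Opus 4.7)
My plan is to prove the result in two stages: first I would extract the formula \eqref{eq:Schaudercoefficient} from any hypothetical Schauder expansion (which at once yields uniqueness), and then produce the expansion for an arbitrary $F\in\mathcal{H}(\Omega)$ via a ``change of Schauder basis'' starting from a previously known Schauder basis of $\mathcal{H}(\Omega)$.

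\emph{Uniqueness and the coefficient formula.} Suppose that some $F\in\mathcal{H}(\Omega)$ admits a representation $F=\sum_{m,n}c_{n,m}P_n^{-m}$ which converges locally uniformly on $\Omega$. Because the rotated diagonal $\rotdiagonal\subseteq\Omega$ corresponds to the compact Riemann sphere $\widehat{\C}$ via $z\mapsto(z,-\cc{z})$, the restricted series converges uniformly on $\rotdiagonal$. Pairing both sides with $P_q^{-p}$ using the inner product $\langle\cdot,\cdot\rangle_\Omega$ from \eqref{eq:scalarproductonOmegaIntro}, the uniform convergence permits the interchange of sum and integral; by the orthogonality of Proposition~\ref{prop:Jacobiorthogonality} only the $(m,n)=(p,q)$ term survives and \eqref{eq:Schaudercoefficient} drops out. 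In particular, the coefficients in any such expansion are uniquely determined by $F$, and the coordinate functionals $F\mapsto c_{n,m}$ are continuous on $\mathcal{H}(\Omega)$ in its compact-open topology, since the right-hand side of \eqref{eq:Schaudercoefficient} is.

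\emph{Existence via change of Schauder basis.} As pointed out in the introduction, it is already known from \cite{KrausRothSchoetzWaldmann2019,SchmittSchoetz2022,HeinsMouchaRoth1} that $\mathcal{H}(\Omega)$ possesses a Schauder basis; denote its elements by $(B_i)_{i\in I}$. Given $F\in\mathcal{H}(\Omega)$, I would first write $F=\sum_{i\in I}a_i B_i$, absolutely and locally uniformly on $\Omega$. The core step is then to expand each individual $B_i$ as a series of PFM, $B_i=\sum_{m,n}\beta_{i,m,n}P_n^{-m}$, with $\beta_{i,m,n}$ computed from \eqref{eq:Schaudercoefficient} applied to $B_i$, or equivalently read off from the explicit rational form \eqref{eq:PoissonFourierExplicit}. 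Substituting into the expansion of $F$ and, provisionally, interchanging the order of summation, the collected coefficient of $P_n^{-m}$ should collapse to $c_{n,m}=(2m+1)\binom{m+|n|}{|n|}\binom{m}{|n|}^{-1}\langle F,P_n^{-m}\rangle_\Omega$, in agreement with the uniqueness step.

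\emph{The combinatorial identity and convergence.} The decisive point is to justify the interchange of summation and the absolute, locally uniform convergence of the resulting double series $\sum c_{n,m}P_n^{-m}$. Both reduce to quantitative control of the transition matrix $(\beta_{i,m,n})$ between $(B_i)$ and $(P_n^{-m})$, which in turn comes down to the curious combinatorial identity postponed to Section~\ref{sec:coefficients}. I expect this identity to be the main obstacle: its proof should consist in recognizing the relevant partial sums as values of certain well-poised or balanced ${}_3F_2$ hypergeometric series and reducing them via the transformations of Whipple~\cite{whipple} to the closed form dictated by \eqref{eq:Schaudercoefficient}. Once the identity is in hand, Cauchy-style estimates on $\Omega$ applied to the explicit expression in \eqref{eq:PoissonFourierExplicit} yield the required absolute and locally uniform convergence of the double sum, and the uniqueness argument then identifies its limit with $F$.
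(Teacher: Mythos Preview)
Your proposal is correct and follows essentially the same route as the paper: uniqueness via the orthogonality of Proposition~\ref{prop:Jacobiorthogonality}, existence via a change of basis from the known Schauder basis $(f_{p,q})$ of \cite{HeinsMouchaRoth1}, with the combinatorial identity (proved via ${}_3F_2$ series and Whipple's transformations) controlling the transition coefficients, followed by explicit Cauchy-type estimates on the rational expressions \eqref{eq:PoissonFourierExplicit}. One refinement you will discover is that each $f_{p,q}$ is in fact a \emph{finite} linear combination of PFM (Lemma~\ref{lem:OldSchauderasPFM}), so the transition matrix has only finitely many nonzero entries per row; the combinatorial identity both verifies these coefficients and, via its $d=n$ case, shows they are bounded by $1$ (Corollary~\ref{cor:coefficientsleq1}), which is exactly the quantitative control you anticipate needing.
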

	\begin{remark}\phantomsection\label{rem:PFMSchauderbasisIntro}
		\begin{enumerate}[(a)]
			\item We can interpret Theorem \ref{thm:SchauderbasisIntro} as a spectral decomposition of $\mathcal{H}(\Omega)$ in terms of eigenfunctions of $\Delta_{zw}$.  In this sense, the PFM form a natural Schauder basis of $\mathcal{H}(\Omega)$.

			\item In view of Proposition \ref{prop:Jacobiorthogonality} the PFM are an ``orthogonal Schauder basis''. Moreover, Theorem \ref{thm:SchauderbasisIntro} gives another justification for the inner product $\langle\cdot,\cdot\rangle_\Omega$ on $\mathcal{H}(\Omega)$ in \eqref{eq:scalarproductonOmegaIntro}, see Remark \ref{rem:scalarproductIntro}.
		\end{enumerate}
	\end{remark}
		Theorem \ref{thm:SchauderbasisIntro} immediately implies for the spaces $\mathcal{A}(\widehat{\C})$ and $
		\mathcal{A}(\D)$ from \eqref{eq:starproductC} and \eqref{eq:starproductD}:
		\begin{corollary}\label{cor:starproductspaces}
			\begin{enumerate}[(a)]
				\item The set $\{{P_{n}^{-m}\vert}_{\rotdiagonal} \, : \, m\in\N_0,\, n\in\Z,\,| n|\leq m\}$ is a Schauder basis of $\mathcal{A}(\widehat{\C})$.
				\item The set $\{{P_{n}^{-m}\vert}_{\diagonal} \, : \, m\in\N_0,\, n\in\Z,\,|n|\leq m\}$ is a Schauder basis of $\mathcal{A}(\D)$.
			\end{enumerate}
	\end{corollary}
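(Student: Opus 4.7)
The plan is to deduce this corollary directly from Theorem~\ref{thm:SchauderbasisIntro} by transporting the Schauder basis through the natural restriction maps. Concretely, consider the linear maps
\begin{equation}
R_{\rotdiagonal}:\mathcal{H}(\Omega)\to \mathcal{A}(\widehat{\C}),\ F\mapsto F\vert_{\rotdiagonal}\qquad\text{and}\qquad R_{\diagonal}:\mathcal{H}(\Omega)\to \mathcal{A}(\D),\ F\mapsto F\vert_{\diagonal}.
\end{equation}
The first step is to verify that $R_{\rotdiagonal}$ and $R_{\diagonal}$ are topological isomorphisms of Fr\'echet spaces. Surjectivity is built into the definitions~\eqref{eq:starproductC} and \eqref{eq:starproductD}, while injectivity follows from the variant of the identity principle already cited in the introduction (\cite[p.~18]{Range}): a holomorphic function on a domain $U\subseteq\C^2$ meeting $\rotdiagonal$ (resp.\ $\diagonal$) that vanishes on $\rotdiagonal\cap U$ (resp.\ $\diagonal\cap U$) is identically zero. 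By construction, the Fr\'echet topologies on $\mathcal{A}(\widehat{\C})$ and $\mathcal{A}(\D)$ are \emph{defined} to be the ones that make $R_{\rotdiagonal}$ and $R_{\diagonal}$ homeomorphisms onto their images, so continuity of the maps and their inverses is automatic.

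Next I would invoke the standard general principle that topological isomorphisms of Fr\'echet spaces preserve Schauder bases. Explicitly, by Theorem~\ref{thm:SchauderbasisIntro} every $F\in\mathcal{H}(\Omega)$ admits a unique expansion $F=\sum_{m\ge 0}\sum_{|n|\le m}c_{n,m}P_n^{-m}$ converging in the Fr\'echet topology of $\mathcal{H}(\Omega)$. Applying the continuous linear maps $R_{\rotdiagonal}$ and $R_{\diagonal}$ term by term yields
\begin{equation}
R_{\rotdiagonal}F=\sum_{m=0}^{\infty}\sum_{n=-m}^{m}c_{n,m}\,P_n^{-m}\vert_{\rotdiagonal}\,,\qquad R_{\diagonal}F=\sum_{m=0}^{\infty}\sum_{n=-m}^{m}c_{n,m}\,P_n^{-m}\vert_{\diagonal}\,,
\end{equation}
with convergence in $\mathcal{A}(\widehat{\C})$ and $\mathcal{A}(\D)$ respectively. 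Uniqueness of the coefficients on the shadow side follows from the uniqueness of the coefficients in $\mathcal{H}(\Omega)$ combined with the injectivity of the restriction maps: any alternative expansion on $\rotdiagonal$ or $\diagonal$ would, via the topological isomorphism, produce a competing expansion of $F$ in $\mathcal{H}(\Omega)$, contradicting Theorem~\ref{thm:SchauderbasisIntro}.

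There is effectively no technical obstacle here; the only point that deserves a line is the explicit identification of the topologies of $\mathcal{A}(\widehat{\C})$ and $\mathcal{A}(\D)$ with the pullback topologies from $\mathcal{H}(\Omega)$, which is recorded in the paragraph following~\eqref{eq:starproductD}. Once this is noted, both parts~(a) and~(b) are simultaneous consequences of Theorem~\ref{thm:SchauderbasisIntro}, and the proof is essentially a one-paragraph transport argument.
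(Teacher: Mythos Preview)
Your argument is correct and is precisely what the paper intends: the corollary is stated without proof as an immediate consequence of Theorem~\ref{thm:SchauderbasisIntro}, relying on the fact (recorded after~\eqref{eq:starproductD}) that the restriction maps are topological isomorphisms by definition of the topologies on $\mathcal{A}(\widehat{\C})$ and $\mathcal{A}(\D)$. You have simply written out the one-paragraph transport argument that the paper leaves implicit.
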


	\section{Proof of Proposition \ref{prop:Jacobiorthogonality}: Poisson Fourier modes and orthogonality}\label{sec:Jacobi}

	The \emph{Jacobi polynomials} of parameters $k\in\N_0$, $\alpha,\beta\in\C$ are defined by
		\begin{equation}\label{eq:Jacobi}
			\Jacobi_k^{(\alpha,\beta)}(x):=
			\binom{\alpha+k}{k}\Fz{-k,k+\alpha+\beta+1}{\alpha+1}{\frac{1-x}{2}} ,\qquad x\in[-1,1]\, .
		\end{equation}
	These functions are related to the PFM as follows:
		\begin{lemma}\label{lem:Jacobipolynomials}
			Let $m\in\N_0$, $n\in\Z$ and $|n|\leq m$. Then
		\begin{equation}\label{eq:PoissonisJacobi}
				P_{n}^{-m}(z,w)=\frac{(-1)^m}{(1-zw)^{|n|}}\Jacobi_{ m-|n|}^{(|n|,|n|)}\left(\frac{zw+1}{zw-1}\right)\cdot\begin{cases}
					w^n\quad & \mathrm{if }\, \,  n\geq0\,,\\
					z^{|n|}  \quad & \mathrm{if }\, \, n\leq0\,.
				\end{cases}
		\end{equation}
		\end{lemma}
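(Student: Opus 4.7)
The plan is to match both sides of \eqref{eq:PoissonisJacobi} as explicit rational functions in $z,w$ and verify that the coefficients agree. By the symmetry in Remark \ref{rem:PFMIntro}(c), it suffices to treat $n\geq 0$; the case $n\leq 0$ then follows by swapping $z$ and $w$, since both sides of the identity transform in the obvious corresponding way.

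For the left-hand side of \eqref{eq:PoissonisJacobi}, Remark \ref{rem:PFMIntro}(b) already supplies the closed form
\[
P_n^{-m}(z,w) \;=\; (-1)^n \sum_{k=0}^{m-n}\binom{m}{k+n}\binom{m}{k}\frac{z^k w^{k+n}}{(1-zw)^m}.
\]
For the right-hand side, the natural choice is \emph{not} the hypergeometric definition \eqref{eq:Jacobi}, but rather the classical binomial-convolution representation
\[
\Jacobi_k^{(\alpha,\beta)}(x) \;=\; \sum_{j=0}^{k}\binom{k+\alpha}{k-j}\binom{k+\beta}{j}\left(\frac{x-1}{2}\right)^{j}\left(\frac{x+1}{2}\right)^{k-j}
\]
(this is Szeg\H{o}'s formula (4.3.2) in \emph{Orthogonal Polynomials}). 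Specialising to $\alpha=\beta=n$, $k=m-n$, and $x=\frac{zw+1}{zw-1}$, the two key quantities come out cleanly as $\frac{x-1}{2}=-\frac{1}{1-zw}$ and $\frac{x+1}{2}=-\frac{zw}{1-zw}$, so that pulling the common factor $(-1)^{m-n}(1-zw)^{-(m-n)}$ out of the sum recasts the Jacobi polynomial as a polynomial in $zw$.

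After this substitution, a re-indexing $k=m-n-j$ together with $\binom{m}{m-n-k}=\binom{m}{n+k}$ converts $\Jacobi_{m-n}^{(n,n)}\bigl(\tfrac{zw+1}{zw-1}\bigr)$ into the explicit expression $(-1)^{m-n}(1-zw)^{-(m-n)}\sum_{k=0}^{m-n}\binom{m}{k+n}\binom{m}{k}(zw)^k$. Plugging this into the right-hand side of \eqref{eq:PoissonisJacobi} and multiplying through, the signs collapse via $(-1)^m(-1)^{m-n}=(-1)^n$ and the powers of $1-zw$ combine as $(1-zw)^{-n}(1-zw)^{-(m-n)}=(1-zw)^{-m}$, producing exactly the closed form of $P_n^{-m}(z,w)$ displayed above.

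No step of this argument is deep; the only obstacle is picking the right representation of the Jacobi polynomial. Starting instead from the hypergeometric definition \eqref{eq:Jacobi} evaluates $\Jacobi_{m-n}^{(n,n)}$ in terms of $\F\bigl(n-m,m+n+1;n+1;\frac{1}{1-zw}\bigr)$, whereas the PFM definition \eqref{eq:PoissonFourier} features $\F(-m,n-m;n+1;zw)$ at the point $zw$; bridging these two requires a Pfaff or Euler transformation, which amounts to the same identity in more cumbersome disguise. The binomial-convolution route sidesteps this detour entirely and reduces the verification to elementary algebraic bookkeeping.
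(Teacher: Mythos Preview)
Your argument is correct and complete: Szeg\H{o}'s product formula (4.3.2), after the substitution $x=\tfrac{zw+1}{zw-1}$ and the reindexing $k=m-n-j$, reproduces exactly the rational expansion of $P_n^{-m}$ from Remark~\ref{rem:PFMIntro}(b), and the sign and denominator bookkeeping you indicate checks out.

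The paper takes a different route. It starts from the hypergeometric definition \eqref{eq:PoissonFourier}, applies the Pfaff-type transformation \cite[15.3.4]{abramowitz1984} to change the argument from $zw$ to $\tfrac{zw}{zw-1}$, and thereby recognises $P_n^{-m}$ first as a multiple of $\Jacobi_m^{(n,-n)}\bigl(\tfrac{1+zw}{1-zw}\bigr)$; it then invokes the symmetry $\Jacobi_k^{(\alpha,\beta)}(x)=(-1)^k\Jacobi_k^{(\beta,\alpha)}(-x)$ together with the degree-reduction identity $\binom{k}{\ell}\Jacobi_k^{(-\ell,\beta)}(x)=\binom{k+\beta}{\ell}\bigl(\tfrac{x-1}{2}\bigr)^\ell\Jacobi_{k-\ell}^{(\ell,\beta)}(x)$ from \cite[p.~59,~64]{szego75} to convert this into $\Jacobi_{m-n}^{(n,n)}$. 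Your approach bypasses both the hypergeometric transformation and the two Jacobi identities, reducing the lemma to a single known representation and elementary reindexing; this is shorter and more self-contained. The paper's detour through $\Jacobi_m^{(n,-n)}$ does display an intermediate identity (your equation~\eqref{eq:Jacobi1} in the paper's numbering) that may have independent interest, but for the purpose of establishing \eqref{eq:PoissonisJacobi} your direct verification is the cleaner argument.
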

		\begin{proof}
			Let $n\geq0$. Using a transformation formula for $\F$ functions, see \cite[Eq. 15.3.4]{abramowitz1984}, we have
			\begin{equation}
				P_n^{-m}(z,w)=\binom{m}{n}
				(-1)^n w^n
				\Fz{-m,1+m}{n+1}{\frac{zw}{zw-1}}\,.
			\end{equation}
			Hence,
			\begin{align}\label{eq:Jacobi1}
				P_{n}^{-m}(z,w)&=\binom{m}{n}\binom{m+n}{n}^{-1}(-1)^n w^n\Jacobi_m^{(n,-n)}\left(\frac{1+zw}{1-zw}\right)\, .
			\end{align}
			The Jacobi polynomials fulfil the symmetry property $\Jacobi_k^{(\alpha,\beta)}(x)=(-1)^k\Jacobi_k^{(\beta,\alpha)}(-x)$ as well as
			\begin{equation*}
				\binom{k}{\ell}\Jacobi_k^{(-\ell,\beta)}(x)=\binom{k+\beta}{\ell}\left(\frac{x-1}{2}\right)^\ell\Jacobi_{k-\ell}^{(\ell,\beta)}(x)
			\end{equation*}
			for all $\ell=1,2,...,k$, see \cite[p.~59, 64]{szego75}. Inserting both identities into \eqref{eq:Jacobi1} yields \eqref{eq:PoissonisJacobi}.
		\end{proof}
		The Jacobi polynomials are called orthogonal polynomials due to the following property:
		\begin{equation}\label{eq:Jacobiintegral}
			\int\limits_{-1}^1(1-x)^\alpha(1+x)^\beta\Jacobi_k^{(\alpha,\beta)}(x)\Jacobi_\ell^{(\alpha,\beta)}(x)\, dx=\mathcal{A}_{\alpha,\beta,k}\delta_{k,\ell}
		\end{equation}
		with $\alpha,\beta>-1$, see \cite[Eq.~22.2.1]{abramowitz1984}. Here, $\delta_{k,\ell}:=1$ if $k=\ell$ and $\delta_{k,\ell}:=0$ otherwise, and
		\begin{equation*}\label{eq:Jacobiintegralcoefficient}
			\mathcal{A}_{\alpha,\beta,k}:=\frac{2^{\alpha+\beta+1}\Gamma(k+\alpha+1)\Gamma(k+\beta+1)}{k!(2k+\alpha+\beta+1)\Gamma(k+\alpha+\beta+1)} \, .
		\end{equation*}
		\begin{proof}[Proof of Proposition \ref{prop:Jacobiorthogonality}]
			Let $n,q\geq0$. Using polar coordinates and Lemma \ref{lem:Jacobipolynomials} we can rewrite the left-hand side of \eqref{eq:Jacobiorthogonality} and compute
			\begin{align*}
				&\int\limits_0^{\infty}\frac{1}{2\pi}\int\limits_0^{2\pi} P_n^{-m}(e^{it}\sqrt{s},-e^{-it}\sqrt{s})\cc{P_q^{-p}(e^{it}\sqrt{s},-e^{-it}\sqrt{s})}\frac{dtds}{(1+s)^2} \\
				&\overset{\text{Lem. }\ref{lem:Jacobipolynomials}}{=}\int\limits_0^{\infty}\frac{1}{2\pi}\int\limits_0^{2\pi}(-1)^{m+p}e^{-i(n-q)t}\left(\frac{\sqrt{s}}{1+s}\right)^{n+q}\Jacobi_{m-n}^{(n,n)}\left(\frac{s-1}{s+1}\right)\Jacobi_{p-q}^{(q,q)}\left(\frac{s-1}{s+1}\right)\frac{dtds}{(1+s)^2} \\
				&=(-1)^{m+p}\int\limits_0^{\infty}\underbrace{\frac{1}{2\pi}\int\limits_0^{2\pi}e^{-i(n-q)t}\,dt}_{=\delta_{n,q}}\left(\frac{\sqrt{s}}{1+s}\right)^{n+q}\Jacobi_{m-n}^{(n,n)}\left(\frac{s-1}{s+1}\right)\Jacobi_{p-q}^{(q,q)}\left(\frac{s-1}{s+1}\right)\frac{ds}{(1+s)^2} \\
				&=(-1)^{m+p}\delta_{n,q}\int\limits_0^{\infty}\left(\frac{s}{(1+s)^2}\right)^{n}\Jacobi_{m-n}^{(n,n)}\left(\frac{s-1}{s+1}\right)\Jacobi_{p-n}^{(n,n)}\left(\frac{s-1}{s+1}\right)\frac{ds}{(1+s)^2} \\
				&\overset{x=\tfrac{s-1}{s+1}}{=}(-1)^{m+p}\frac{\delta_{n,q}}{2}\int\limits_{-1}^1\frac{(1+x)^n(1-x)^n}{4^n}\Jacobi_{m-n}^{(n,n)}\left(x\right)\Jacobi_{p-n}^{(n,n)}\left(x\right)\,dx\\
				&\overset{\eqref{eq:Jacobiintegral}}{=}\frac{\mathcal{A}_{n,n,m-n}}{4^n}\frac{\delta_{n,q}\delta_{m,p}}{2}\\
				&=\frac{1}{2m+1}\binom{m}{n}\binom{m+n}{n}^{-1}\delta_{n,q}\delta_{m,p}\, .
			\end{align*}
			For $n,q<0$ only the global signs of the exponential functions in the second and third line change. Similarly, one shows that the integral always vanishes if the subscripts of the PFM in \eqref{eq:Jacobiorthogonality} have different signs.
		\end{proof}

			\section{Proof of Theorem \ref{thm:SchauderbasisIntro}: Poisson Fourier modes as Schauder basis of $\boldsymbol{\mathcal{H}(\Omega)}$}\label{sec:PFMSchauderBasis}\

			The proof of Theorem \ref{thm:SchauderbasisIntro} is divided into several steps because there are some technical considerations involved. However, as this mainly concerns the existence part, we give a proof of uniqueness right away.

			\begin{proof}[Proof of Theorem \ref{thm:SchauderbasisIntro}: uniqueness]\

				By linearity it suffices to show that $H:=\sum_{m=0}^{\infty}\sum_{n=-m}^{m}c_{n,m}P_{n}^{-m}=0$ implies $c_{n,m}=0$ for all $n,m$. Applying Proposition \ref{prop:Jacobiorthogonality} yields
				\begin{align*}
					0=\frac{i}{\pi}\int\limits_{\widehat{\C}}H(z,-\cc{z})\cc{P_n^{-m}(z,-\cc{z})}\frac{dz\, d\cc{z}}{(1+|z|^2)^2}&=\sum_{p=0}^{\infty}\sum_{q=-p}^{p}c_{q,p}
					\frac{i}{\pi}\int\limits_{\widehat{\C}}P_{q}^{-p}(z,-\cc{z})\cc{P_n^{-m}(z,-\cc{z})}\frac{dz\, d\cc{z}}{(1+|z|^2)^2}\\
					&=\sum_{p=0}^{\infty}\sum_{q=-p}^{p}c_{q,p}
					\frac{1}{2m+1}\binom{m}{n}\binom{m+n}{n}^{-1}\delta_{m,p}\delta_{n,q}\\
					&=\frac{1}{2m+1}\binom{m}{n}\binom{m+n}{n}^{-1}c_{n,m} \, .
				\end{align*}
				Hence $c_{n,m}=0$. Note that the computation also shows \eqref{eq:Schaudercoefficient}.
			\end{proof}

		In \cite{HeinsMouchaRoth1} the structure of $\mathcal{H}(\Omega)$ is investigated and a Schauder basis is identified.
			\begin{lemma}[Corollary 4.8 in \cite{HeinsMouchaRoth1}] \label{lem:OldSchauder}
				The set $(f_{p,q})_{p,q \in \N_0}$ of functions where
				\begin{equation*}
					f_{p,q}(z,w)=\frac{z^pw^q}{\left(1-zw\right)^{\max\{p,q\}}}\, ,\quad  p,q\in\N_0  \, ,
				\end{equation*}
				is a Schauder basis of $\mathcal{H}(\Omega)$. In particular, each $F \in \mathcal{H}(\Omega)$ has a unique representation as
				\begin{equation}\label{eq:OldSchauder}
					F(z,w)=\sum \limits_{p,q=0}^{\infty} b_{p,q} f_{p,q}(z,w) \, .
				\end{equation}
				This series converges absolutely and locally uniformly in $\Omega$, and the Schauder coefficients $b_{p,q}$ of $F$ are given by
				\begin{equation} \label{eq:OldSchauderCoeff}
					b_{p,q}= \begin{cases}
						\displaystyle   -\frac{1}{4 \pi^2} \int\limits_{\partial \D} \int
						\limits_{\partial \D} F\left(z, \frac{w}{1+zw} \right) \frac{dzdw}{z^{p+1}
							w^{q+1}} & p<q \, , \\[2mm]
						\displaystyle -\frac{1}{4 \pi^2} \int \limits_{\partial \D} \int
						\limits_{\partial \D} F\left( \frac{z}{1+zw} ,w\right) \frac{dzdw}{z^{p+1}
							w^{q+1}} & p \ge q\, .
					\end{cases}
				\end{equation}
			\end{lemma}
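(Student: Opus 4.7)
The plan is to prove Lemma~\ref{lem:OldSchauder} directly via a reproducing-kernel argument built around the two fiberwise Möbius substitutions $w \mapsto w/(1+zw)$ and $z \mapsto z/(1+zw)$ that already appear in formula~\eqref{eq:OldSchauderCoeff}. First I would verify that each $f_{p,q}$ lies in $\mathcal{H}(\Omega)$: in the chart $\zeta = 1/z$ at $z = \infty$, one computes $f_{p,q}(1/\zeta, w) = \zeta^{m-p} w^q/(\zeta - w)^m$ with $m = \max(p,q)$, which extends holomorphically because $m \geq p$ and the convention $(\infty, 0) \notin \Omega$ forces $w \neq 0$ whenever $\zeta = 0$; symmetry handles $w = \infty$. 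Next I would define tentative dual functionals $\lambda_{p,q}\colon\mathcal{H}(\Omega) \to \C$ by the right-hand side of~\eqref{eq:OldSchauderCoeff} and verify the biorthogonality $\lambda_{p,q}(f_{P,Q}) = \delta_{p,P}\delta_{q,Q}$ by a direct residue calculation. Here the crucial simplification is
\begin{equation*}
1 - z\cdot\frac{w}{1+zw} = \frac{1}{1+zw}, \qquad f_{P,Q}\!\left(z, \frac{w}{1+zw}\right) = z^P w^Q (1+zw)^{\max(P,Q) - Q}.
\end{equation*}
Since $(1+zw)^{\max(P,Q)-Q}$ contributes only nonnegative powers of $zw$, the iterated residue on $\partial\D\times\partial\D$ produces $\delta_{p,P}\delta_{q,Q}$ in the regime $p<q$ exactly when $P<Q$, and vanishes when $P\geq Q$; the opposite regime is handled symmetrically. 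This biorthogonality delivers linear independence of $(f_{p,q})$, the coefficient formula~\eqref{eq:OldSchauderCoeff}, and uniqueness of any convergent expansion $F = \sum b_{p,q} f_{p,q}$.

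For existence and convergence, the core step is to establish a Cauchy-type reproducing identity of the shape
\begin{equation*}
F(z_0, w_0) = -\frac{1}{4\pi^2}\int_{\partial\D}\!\!\int_{\partial\D} F\!\left(z, \frac{w}{1+zw}\right) K_{<}(z,w;z_0,w_0)\,dz\,dw \;+\; (\text{symmetric } K_{\geq}\text{ term}),
\end{equation*}
where formally $K_{<}(z,w;z_0,w_0) = \sum_{p<q} f_{p,q}(z_0, w_0)/(z^{p+1} w^{q+1})$ and $K_{\geq}$ is defined analogously. Summing each as a bivariate geometric series in $z_0/z$ and $w_0/[w(1-z_0 w_0)]$ produces explicit rational kernels, and once the Möbius substitutions are undone, the combined integral collapses to the iterated Cauchy integral formula for $F$ around a torus cycle in $\Omega$ enclosing $(z_0, w_0)$. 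Interchanging summation with integration — justified by uniform bounds on the partial sums of $K_{<}$ and $K_{\geq}$ against $\sup_{\partial\D\times\partial\D}|F(z, w/(1+zw))|$ when $(z_0, w_0)$ stays in a fixed compact subset of $\Omega$ — then yields absolute and locally uniform convergence of $\sum b_{p,q} f_{p,q}$ to $F$.

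The main obstacle will be making the reproducing identity precise. Geometrically, the two substitutions are fiberwise Möbius maps that push $\partial\D\times\partial\D$ off the singular hypersurface $\{zw = 1\}$ into two transverse sheets inside $\Omega$, and the decomposition $p<q$ versus $p\geq q$ reflects the splitting of the boundary cycle between these two sheets. Verifying that the sum of the two integral contributions actually represents $F(z_0,w_0)$ — equivalently, that the two pieces together bound a region containing $(z_0, w_0)$ inside $\Omega$, so that no residue from the singular set $\{zw = 1\}$ is picked up — is the homological and analytic heart of the argument; once this is secured, the series expansion together with its uniform convergence follows by term-by-term expansion and straightforward domination.
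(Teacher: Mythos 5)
First, note that the paper does not prove this lemma at all: it is imported verbatim as Corollary~4.8 of \cite{HeinsMouchaRoth1}, so your proposal is not competing with an in-paper argument but with the external proof, which proceeds via a Laurent-type decomposition of $\mathcal{H}(\Omega)$ into two summands, each biholomorphically identified with $\mathcal{H}(\C^2)$ through the substitutions $w\mapsto w/(1+zw)$ and $z\mapsto z/(1+zw)$. The first half of your proposal is correct and complete in outline: the chart computation $f_{p,q}(1/\zeta,w)=\zeta^{m-p}w^q/(\zeta-w)^m$ does show $f_{p,q}\in\mathcal{H}(\Omega)$, the simplification $1-z\cdot\tfrac{w}{1+zw}=\tfrac{1}{1+zw}$ gives $f_{P,Q}(z,w/(1+zw))=z^Pw^Q(1+zw)^{\max(P,Q)-Q}$, and the resulting biorthogonality $\lambda_{p,q}(f_{P,Q})=\delta_{p,P}\delta_{q,Q}$ (including the cross cases, where the sign of $P-Q$ forces the relevant Taylor coefficient to vanish) does deliver uniqueness and the coefficient formula \eqref{eq:OldSchauderCoeff}, \emph{conditional on} the expansion existing.

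The existence half, however, has a genuine gap exactly where you locate it, and the way you propose to close it would not work as stated. Two concrete problems. (i) The kernel series $K_<$ requires $|w_0|<|w|\,|1-z_0w_0|$ and $|z_0|<|z|$ to converge; on $\partial\D\times\partial\D$ this fails for general $(z_0,w_0)\in\Omega$ (e.g.\ $|w_0|\ge 1$, or $z_0w_0$ near $1$). One must dilate the contours to radii depending on the compact set --- legitimate because $(z,w)\mapsto F(z,w/(1+zw))$ and $(z,w)\mapsto F(z/(1+zw),w)$ are entire on $\C^2$, so the coefficient integrals are contour-independent --- but this has to be said, and it already signals that the two integrals live on different cycles after the substitutions are undone. (ii) More seriously, the claim that the combined integral ``collapses to the iterated Cauchy integral formula for $F$ around a torus cycle in $\Omega$'' cannot be right as stated: there is no single product cycle in $\Omega$ whose iterated Cauchy integral reproduces $F$ at a general point, precisely because $F$ is not holomorphic on any polydisk-like neighbourhood crossing the hypersurface $zw=1$. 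What is actually needed is the additive splitting $F=F_1(z,w/(1+zw))^{-\text{pullback}}+F_2(z/(1+zw),w)^{-\text{pullback}}$ with $F_1,F_2$ entire, i.e.\ a Laurent/Mittag--Leffler decomposition of $\mathcal{H}(\Omega)$ subordinate to the two charts; this is the content of \cite{HeinsMouchaRoth1} that your sketch presupposes rather than proves. Without it, summing the two closed-form kernels does not visibly produce $F(z_0,w_0)$, and the ``homological heart'' you flag remains open. A secondary omission: to conclude that $(f_{p,q})$ is a Schauder basis of the Fr\'echet space $\mathcal{H}(\Omega)$ you also need continuity of the functionals $\lambda_{p,q}$ and of the partial-sum projections, which your biorthogonality argument gives only pointwise.
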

			Note that similar to the PFM the functions $f_{p,q}$ also have a symmetry property, namely $f_{p,q}(z,w)=f_{q,p}(w,z)$. The following result will not be needed in the remainder of this paper. However, it is interesting in its own right: we already know by definition of a Schauder basis, that every PFM can be expressed using the functions $(f_{p,q})_{p,q \in \N_0}$, but, in fact, every PFM can be expressed using only finitely many of the functions $(f_{p,q})_{p,q \in \N_0}$.
			\begin{lemma}\label{lem:PFMwithOldSchauder}
				Let $m \in \N_0$, $n\in\Z$ and $|n|\leq m$. We have
				\begin{equation}\label{eq:PFMwithOldSchauder}
					P_{n}^{-m}=(-1)^n\sum_{j=0}^{m-|n|}\binom{m}{|n|+j}\binom{m+|n|+j}{j}\cdot\begin{cases}
						f_{j,j+n} \quad & \mathrm{if }\, \, n\geq0\,,\\
						f_{j+|n|,j}  \quad & \mathrm{if }\, \, n\leq0\,.
					\end{cases}
				\end{equation}
			\end{lemma}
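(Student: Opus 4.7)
\textbf{Proof proposal for Lemma~\ref{lem:PFMwithOldSchauder}.}

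By the symmetry property $P_{-n}^{-m}(z,w)=P_{n}^{-m}(w,z)$ (Remark~\ref{rem:PFMIntro}(c)) and $f_{j+|n|,j}(z,w)=f_{j,j+|n|}(w,z)$, it suffices to prove the formula for $n\geq 0$. Substituting the definition of $f_{j,j+n}$ (for $n\geq 0$ one has $\max\{j,j+n\}=j+n$), the right-hand side of \eqref{eq:PFMwithOldSchauder} becomes
\begin{equation*}
(-1)^n\sum_{j=0}^{m-n}\binom{m}{n+j}\binom{m+n+j}{j}\frac{z^{j}w^{j+n}}{(1-zw)^{j+n}}
= (-1)^n\frac{w^{n}}{(1-zw)^{n}}\sum_{j=0}^{m-n}\binom{m}{n+j}\binom{m+n+j}{j}u^{j}\,,
\end{equation*}
where I set $u:=\tfrac{zw}{1-zw}$. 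The plan is to identify this finite sum with a ${}_2F_1$ and then match it with a hypergeometric representation of $P_{n}^{-m}$ via one of the standard transformations of $\F$.

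For the sum, a direct Pochhammer computation using $\binom{m}{n+j}=\binom{m}{n}\tfrac{(-1)^j(n-m)_j}{(n+1)_j}$ and $\binom{m+n+j}{j}=\tfrac{(m+n+1)_j}{j!}$ produces
\begin{equation*}
\sum_{j=0}^{m-n}\binom{m}{n+j}\binom{m+n+j}{j}u^{j}
=\binom{m}{n}\sum_{j=0}^{m-n}\frac{(n-m)_j(m+n+1)_j}{(n+1)_j}\frac{(-u)^{j}}{j!}
=\binom{m}{n}\Fz{n-m,m+n+1}{n+1}{-u}\,,
\end{equation*}
the series terminating because $(n-m)_j=0$ for $j>m-n$. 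Setting $x:=-u=\tfrac{zw}{zw-1}$ and noting $1-x=\tfrac{1}{1-zw}$, the right-hand side of \eqref{eq:PFMwithOldSchauder} reads
\begin{equation*}
(-1)^n\binom{m}{n}w^{n}(1-x)^{n}\Fz{n-m,m+n+1}{n+1}{x}\,.
\end{equation*}

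On the other hand, the Pfaff transformation (as already used in the proof of Lemma~\ref{lem:Jacobipolynomials}) gives the companion representation
\begin{equation*}
P_{n}^{-m}(z,w)=(-1)^n\binom{m}{n}w^{n}\Fz{-m,m+1}{n+1}{x}\,.
\end{equation*}
Thus the identity reduces to the purely hypergeometric assertion
\begin{equation*}
\Fz{-m,m+1}{n+1}{x}=(1-x)^{n}\Fz{n-m,m+n+1}{n+1}{x}\,,
\end{equation*}
which is precisely Euler's transformation $\F(a,b;c;x)=(1-x)^{c-a-b}\F(c-a,c-b;c;x)$ with $a=-m$, $b=m+1$, $c=n+1$, so $c-a-b=n$, $c-a=m+n+1$, $c-b=n-m$. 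This completes the argument. No deep obstacle is anticipated: the only slightly non-obvious step is recognizing that the change of variable $u\mapsto x=-u$ converts the finite binomial sum into a ${}_2F_1$ whose argument matches the Pfaff-transformed expression for $P_n^{-m}$, so that one application of Euler's transformation closes the gap.
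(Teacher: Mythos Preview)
Your argument is correct but follows a genuinely different route from the paper. The paper brings every $f_{j,j+n}$ over the common denominator $(1-zw)^{m}$ by expanding $(1-zw)^{m-n-j}$ with the binomial theorem, then compares the coefficients of each monomial $z^{\ell}w^{\ell+n}$; the resulting finite identity is verified with the Chu--Vandermonde formula $\Fz{-k,b}{c}{1}=(c-b)_k/(c)_k$. You instead keep the natural variable $u=zw/(1-zw)$, recognise the sum on the right-hand side directly as $\binom{m}{n}\Fz{n-m,m+n+1}{n+1}{-u}$, invoke the Pfaff-type representation of $P_n^{-m}$ already recorded in the proof of Lemma~\ref{lem:Jacobipolynomials}, and close with a single application of Euler's transformation $\F(a,b;c;x)=(1-x)^{c-a-b}\F(c-a,c-b;c;x)$. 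Your version is slightly more streamlined: it avoids the intermediate coefficient comparison and trades Chu--Vandermonde for Euler, which is equally standard. The paper's version, on the other hand, is marginally more self-contained in that the only special-function input is the elementary Chu--Vandermonde sum.
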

			\begin{proof}
				Let $n\geq0$ and fix $(z,w)\in\Omega\cap(\C\times\C)$. In view of the global factor $(1-zw)^{-m}$ in \eqref{eq:PoissonFourierExplicit}, we rewrite each of the $f_{j,j+n}$ as
				\begin{equation*}
					f_{j,j+n}(z,w)
					=
					\frac{z^j w^{j+n}}{(1-zw)^{m}}
					(1-zw)^{m-n-j}
					=
					\frac{1}{(1-zw)^{m}}
					\sum_{k=0}^{m-n-j}
					\binom{m-n-j}{k}
					(-1)^k
					z^{j+k} w^{j+n+k}.
				\end{equation*}
				Dropping this common prefactor, \eqref{eq:PFMwithOldSchauder} is equivalent to
				\begin{equation*}
					\label{eq:BasisChangeProof}
					\sum_{j=0}^{m-n}
					\binom{m}{j}
					\binom{m}{n+j}
					z^j w^{j + n}
					=
					\sum_{j=0}^{m-n}
					c_{j,m,n}
					\sum_{k=0}^{m-n-j}
					\binom{m-n-j}{k}
					(-1)^k
					z^{j+k} w^{j+n+k}\,,
				\end{equation*}
				where $c_{j,m,n} = \binom{m}{n+j} \binom{m+n+j}{j}$. Using the linear independence of the polynomials $z^j w^{j+n}$ thus yields a system of linear equations for the coefficients $c_{j,m,n}$. Comparing coefficients for $z^\ell w^{\ell+n}$ leads to the identity
				\begin{equation*}
					\binom{m}{\ell}\binom{m}{n+\ell}
					=
					\sum_{j=0}^\ell
					c_{j,m,n}
					\binom{m-n-j}{\ell-j}
					(-1)^{\ell-j}\, .
				\end{equation*}
				Our choice of $c_{j,m,n}$ indeed satisfies this identity: first, we rewrite
				\begin{align*}
					\sum_{j=0}^{\ell}
					c_{j,m,n}
					\binom{m-n-j}{\ell-j}
					(-1)^{\ell-j}
					&=
					\sum_{j=0}^{\ell}
					\binom{m}{n+j}
					\binom{m+n+j}{j}
					\binom{m-n-j}{\ell-j}
					(-1)^{\ell-j} \\
					&=
					\binom{m}{n+\ell}
					\sum_{j=0}^{\ell}
					\binom{m+n+j}{j}
					\binom{n+\ell}{n+j}
					(-1)^{\ell-j}.
				\end{align*}
				Thus, it remains to show that the sum on the right-hand side equals $\binom{m}{\ell}$. Note that we can replace the sum by a hypergeometric $\F$ function. Moreover, investing the \emph{Chu-Vandermonde identity} \cite[Cor.~2.2.3]{askey}
				\begin{equation*}
					\Fz{-k,b}{c}{1}=\frac{(c-b)_k}{(c)_k}\,, \qquad k\in\N_0\,,\; b,c\in\C\,, \; -c\not\in \N_0
				\end{equation*}
				yields
				\begin{align*}
					\sum_{j=0}^{\ell}
					\binom{m+n+j}{j}
					\binom{n+\ell}{n+j}
					(-1)^{\ell-j}
					&=
					(-1)^{\ell}
					\binom{n+\ell}{n}
					\Fz{
						-\ell, m+n+1}{n+1}{1} \\
					&=
					(-1)^{\ell}
					\binom{n+\ell}{n}
					\frac{(-m)_{\ell}}{(n+1)_{\ell}}
					=
					\binom{m}{\ell}\, .\qedhere
				\end{align*}
			\end{proof}

			An essential observation that allows us to prove the existence part of Theorem \ref{thm:SchauderbasisIntro} is that Lemma \ref{lem:PFMwithOldSchauder} has a converse.

			\begin{lemma}\label{lem:OldSchauderasPFM}
				Let $n,m \in \N_0$, $n\leq m$. For all $(z,w)\in\Omega$ we have
					\begin{align}\label{eq:OldSchauderasPFM}
						f_{n,m}(z,w)&=(-1)^{m-n}
						\sum_{s=0}^{n}
						a_{s,m,n}P_{m-n}^{-(m-s)}(z,w)=f_{m,n}(w,z)
					\end{align}
				where
				\begin{equation}\label{eq:OldSchauderasPFMcoefficients}
					a_{s,m,n}=(-1)^s\binom{m}{s}\binom{2m-s}{n}^{-1}\frac{2m-2s+1}{2m-s+1}\,.
				\end{equation}
			\end{lemma}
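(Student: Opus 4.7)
The second equality $f_{n,m}(z,w)=f_{m,n}(w,z)$ in \eqref{eq:OldSchauderasPFM} is immediate from the definition $f_{p,q}(z,w)=z^pw^q(1-zw)^{-\max\{p,q\}}$ together with $\max\{n,m\}=\max\{m,n\}=m$, so only the first equality requires work. My plan is to convert this first equality into a classical Jacobi-polynomial expansion via Lemma \ref{lem:Jacobipolynomials}. For $0\leq s\leq n$ we have $|m-n|\leq m-s$, so \eqref{eq:PoissonisJacobi} yields
\[
P_{m-n}^{-(m-s)}(z,w)=\frac{(-1)^{m-s}w^{m-n}}{(1-zw)^{m-n}}\,\Jacobi_{n-s}^{(m-n,m-n)}\!\left(\frac{zw+1}{zw-1}\right).
\]
Substituting this into \eqref{eq:OldSchauderasPFM}, dividing both sides by the common factor $w^{m-n}(1-zw)^{-(m-n)}$, and using the parity $\Jacobi_k^{(\alpha,\alpha)}(-v)=(-1)^k\Jacobi_k^{(\alpha,\alpha)}(v)$ to swap $(zw+1)/(zw-1)$ for $v:=(1+zw)/(1-zw)$ (the overall sign $(-1)^{m-n}(-1)^{m-s}(-1)^{n-s}=1$ being harmless) reduces the claim to the polynomial identity
\[
\left(\frac{v-1}{2}\right)^n=\sum_{s=0}^n a_{s,m,n}\,\Jacobi_{n-s}^{(m-n,m-n)}(v), \qquad v\in\C,
\]
where the left-hand side comes from $zw/(1-zw)=(v-1)/2$.

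Since the Jacobi polynomials $\{\Jacobi_{n-s}^{(m-n,m-n)}\}_{s=0}^{n}$ have pairwise distinct degrees, they form a basis of polynomials in $v$ of degree at most $n$, and the coefficients of any such expansion are uniquely determined via the orthogonality relation \eqref{eq:Jacobiintegral}. I plan to compute the coefficient
\[
\widetilde{a}_s=\mathcal{A}_{m-n,m-n,n-s}^{-1}\int\limits_{-1}^{1}(1-v)^{m-n}(1+v)^{m-n}\!\left(\frac{v-1}{2}\right)^{\!n}\Jacobi_{n-s}^{(m-n,m-n)}(v)\,dv
\]
and check that $\widetilde{a}_s=a_{s,m,n}$. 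Writing $((v-1)/2)^n=(-1)^n 2^{-n}(1-v)^n$, inserting the Rodrigues representation
\[
\Jacobi_{k}^{(\alpha,\beta)}(v)=\frac{(-1)^k}{2^k k!}(1-v)^{-\alpha}(1+v)^{-\beta}\frac{d^k}{dv^k}\!\left[(1-v)^{\alpha+k}(1+v)^{\beta+k}\right]
\]
with $(\alpha,\beta,k)=(m-n,m-n,n-s)$, and integrating by parts $n-s$ times (all boundary terms vanish, since $m\geq n$ keeps the factors $(1\pm v)^{m-n+\cdots}$ sufficiently high) reduces the problem to the Beta integral
\[
\int\limits_{-1}^{1}(1-v)^{m}(1+v)^{m-s}\,dv=\frac{2^{2m-s+1}\,m!\,(m-s)!}{(2m-s+1)!},
\]
yielding $\widetilde{a}_s$ as an explicit product of factorials.

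The main obstacle I anticipate is the final factorial bookkeeping needed to identify $\widetilde{a}_s$ with $a_{s,m,n}=(-1)^s\binom{m}{s}\binom{2m-s}{n}^{-1}\tfrac{2m-2s+1}{2m-s+1}$. The distinctive prefactor $\tfrac{2m-2s+1}{2m-s+1}$ emerges precisely by pairing the term $2(n-s)+2(m-n)+1=2m-2s+1$ appearing in the denominator of $\mathcal{A}_{m-n,m-n,n-s}$ with the $(2m-s+1)!$ produced by the Beta integral, while the binomials $\binom{m}{s}\binom{2m-s}{n}^{-1}$ reassemble from the remaining factorials $m!,n!,s!,(m-s)!,(2m-n-s)!$. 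No deep new combinatorial identity is invoked: the whole computation collapses to the standard expansion of $((v-1)/2)^n$ in Jacobi polynomials with parameters $(m-n,m-n)$, but the algebra has to be executed carefully.
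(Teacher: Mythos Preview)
Your approach is correct and genuinely different from the paper's. After the same preliminary reduction (dividing out the common factor $w^{m-n}(1-zw)^{-(m-n)}$), the paper expands each $P_{m-n}^{-(m-s)}$ via the explicit rational formula \eqref{eq:PoissonFourierExplicit}, compares coefficients of $(zw)^d$, and is led to the double-sum combinatorial identity \eqref{eq:combinatorialID}; verifying \eqref{eq:combinatorialID} then occupies all of Section~\ref{sec:coefficients}, requiring ${}_3F_2$ transformations and Whipple's theory. You instead pass through Lemma~\ref{lem:Jacobipolynomials} and recognise the resulting statement as the classical expansion of $((v-1)/2)^n$ in the orthogonal family $\{\Jacobi_{k}^{(m-n,m-n)}\}_{k=0}^n$, whose coefficients are read off via Rodrigues' formula, $(n-s)$-fold integration by parts, and a single Beta integral. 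Carrying out your computation indeed gives
\[
\widetilde a_s=(-1)^s\binom{m}{s}\binom{2m-s}{n}^{-1}\frac{2m-2s+1}{2m-s+1}=a_{s,m,n},
\]
so the argument closes without any hypergeometric machinery. What your route buys is a substantially shorter and more conceptual proof that bypasses \eqref{eq:combinatorialID} entirely; what the paper's route buys is the identity \eqref{eq:combinatorialID} itself as a by-product (used later, e.g.\ in Corollary~\ref{cor:coefficientsleq1}), though its $d=n$ case---which is all that corollary needs---is immediate from your Jacobi expansion evaluated at $v=1$. One small point to make explicit: your polynomial identity in $v$ is established on $[-1,1]$ by orthogonality and hence holds for all $v\in\C$, which covers all $(z,w)\in\Omega\cap\C^2$; the extension to the remaining chart points of $\Omega$ follows by holomorphy.
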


			\begin{proof}
				The denominator degree of each term of the sum in \eqref{eq:OldSchauderasPFM} differs. Therefore, we rewrite
				\begin{align*}
					P_{m-n}^{-(m-s)}&=(-1)^{m-n}\sum_{k=0}^{n-s}\binom{m-s}{k+m-n}\binom{m-s}{k}\frac{z^kw^{k+m-n}}{(1-zw)^{m-s}}\\
					&=(-1)^{m-n}\sum_{k=0}^{n-s}\binom{m-s}{k+m-n}\binom{m-s}{k}\frac{z^kw^{k+m-n}}{(1-zw)^{m}}(1-zw)^{s}\\
					&=(-1)^{m-n}\sum_{k=0}^{n-s}\binom{m-s}{k+m-n}\binom{m-s}{k}\frac{z^kw^{k+m-n}}{(1-zw)^{m}}\sum_{t=0}^s(-1)^t\binom{s}{t}(zw)^{t}\, .
				\end{align*}
				Hence, we can drop the factor $w^{m-n}(1-zw)^{-m}$ in \eqref{eq:OldSchauderasPFM}, and we have to prove
				\begin{align*}
					(zw)^n=
					\sum_{s=0}^{n}\sum_{k=0}^{n-s}\sum_{t=0}^s
					(-1)^{s+t}\binom{m}{s}\binom{s}{t}\binom{2m-s}{n}^{-1}\binom{m-s}{k+m-n}\binom{m-s}{k}\frac{2m-2s+1}{2m-s+1}(zw)^{k+t} \, .
				\end{align*}
				Now, we reorder the right-hand side
				\begin{align*}
					(zw)^n=\sum_{s=0}^n\sum_{d=0}^n\sum_{t=0}^{d}	(-1)^{s+t}\binom{m}{s}\binom{s}{t}\binom{2m-s}{n}^{-1}\binom{m-s}{d-t+m-n}\binom{m-s}{d-t}\frac{2m-2s+1}{2m-s+1}(zw)^{d} \, .
				\end{align*}
				Thus, it suffices to show
				\begin{align*}\label{eq:combinatorialID}
					\delta_{n,d}=\sum_{s=0}^n\sum_{t=0}^{d}	(-1)^{s+t}\binom{m}{s}\binom{s}{t}\binom{2m-s}{n}^{-1}\binom{m-s}{d-t+m-n}\binom{m-s}{d-t}\frac{2m-2s+1}{2m-s+1} \, .\tag{CID}
				\end{align*}
				Since \eqref{eq:combinatorialID} is ``just'' a combinatorial identity, we postpone its proof to Section \ref{sec:coefficients}, because the techniques needed for its proof are not required elsewhere in this section.
			\end{proof}

			\begin{corollary}\label{cor:coefficientsleq1}
				The absolute values of the coefficients $a_{s,m,n}$ from \eqref{eq:OldSchauderasPFMcoefficients} are less than or equal to 1.
			\end{corollary}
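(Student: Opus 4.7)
The plan is to factor $|a_{s,m,n}|$ as a product of two quantities, each bounded by $1$. Recall that by the hypothesis of Lemma \ref{lem:OldSchauderasPFM} we have $0 \leq s \leq n \leq m$. The rational factor $(2m-2s+1)/(2m-s+1)$ is immediately $\leq 1$, since rewriting it as $1 - s/(2m-s+1)$ and noting $s \geq 0$ with positive denominator makes this transparent. The remaining task is to establish the binomial inequality $\binom{m}{s} \leq \binom{2m-s}{n}$.

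For this I would invoke Vandermonde's convolution applied to the splitting $2m-s = m + (m-s)$,
\[
\binom{2m-s}{n}
=
\sum_{k=0}^{n}
\binom{m}{k}\binom{m-s}{n-k}\,.
\]
All summands are non-negative, so it suffices to extract a single well-chosen term. The range constraints $0 \leq s \leq n$ and $0 \leq n-s \leq m-s$ (both following from $s \leq n \leq m$) guarantee that the term for $k=s$ lies in the valid range and is non-zero. Consequently
\[
\binom{2m-s}{n} \geq \binom{m}{s}\binom{m-s}{n-s} \geq \binom{m}{s}\,,
\]
where the last step uses $\binom{m-s}{n-s} \geq 1$, valid since $0 \leq n-s \leq m-s$. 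Combining with the bound on the rational factor yields the claim.

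I do not foresee any genuine obstacle; the only thing worth checking is that the constraints $s \leq n \leq m$ coming from the indexing in \eqref{eq:OldSchauderasPFM} and from Lemma \ref{lem:OldSchauderasPFM} are precisely what is required to legalize the $k=s$ term of the Vandermonde sum and to make $\binom{m-s}{n-s}$ a positive integer. The bound is sharp: e.g.\ $a_{0,m,0}=1$, so nothing stronger than $\leq 1$ can be expected.
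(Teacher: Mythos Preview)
Your proof is correct and takes a genuinely different, more elementary route than the paper's. The paper argues via the special case $d=n$ of the combinatorial identity \eqref{eq:combinatorialID}: after observing that the $t$-sum collapses to the single term $t=s$, one obtains
\[
1=\sum_{s=0}^{n}\binom{m-s}{n-s}\,|a_{s,m,n}|\,,
\]
and since each summand is non-negative with $\binom{m-s}{n-s}\geq 1$, the bound follows. This route leans on \eqref{eq:combinatorialID}, whose full proof occupies Section~\ref{sec:coefficients} and requires Whipple's theory of ${}_3F_2$ transformations. By contrast, your Vandermonde argument is entirely self-contained and avoids that machinery. It is worth noting that your inequality $\binom{2m-s}{n}\geq\binom{m}{s}\binom{m-s}{n-s}$ in fact also yields the sharper bound $|a_{s,m,n}|\leq\binom{m-s}{n-s}^{-1}$ mentioned in Remark~\ref{rem:PFMSchauderbasis}, so nothing is lost relative to the paper's approach; what the paper's identity additionally supplies is the exact value $1$ of the weighted sum, which your argument does not recover.
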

			\begin{proof}
				For $d=n$ we know from \eqref{eq:combinatorialID} that
				\begin{align*}
					1&=\sum_{s=0}^n\sum_{t=0}^{n}(-1)^{s+t}\binom{m}{s}\binom{s}{t}\binom{2m-s}{n}^{-1}\binom{m-s}{m-t}\binom{m-s}{n-t}\frac{2m-2s+1}{2m-s+1}\\
					&=\sum_{s=0}^n\binom{m}{s}\binom{2m-s}{n}^{-1}\binom{m-s}{n-s}\frac{2m-2s+1}{2m-s+1}=\sum_{s=0}^{n}\binom{m-s}{n-s}|a_{s,m,n}| \, .
				\end{align*}
				Thus, as a sum of non-negative scalars and with
				$\binom{m-s}{n-s}\geq1$ it follows that $|a_{s,m,n}|\leq1$ for all $s=0,...,n$.
			\end{proof}

			Now we can turn to the proof of Theorem \ref{thm:SchauderbasisIntro}. Our strategy is as follows: we use the Schauder representation of $F$ with respect to the ``old'' Schauder basis $(f_{m,n})_{m,n\in\N_0}$ given by \eqref{eq:OldSchauder}, i.e.\
			\begin{equation*}
				F(z,w)=\sum_{m,n=0}^\infty b_{m,n}f_{m,n}(z,w)=\sum_{m=0}^\infty\left[\sum_{n=0}^{m} b_{m,n}f_{m,n}(z,w)+\sum_{n=1}^{m-1} b_{n,m}f_{n,m}(z,w)\right]\, .
			\end{equation*}
			Here, we can replace every $f_{m,n}$ function with a sum of PFM by Lemma \ref{lem:OldSchauderasPFM} which yields
			\begin{align}\label{eq:absolutconvergentseries}
				F(z,w)=\sum_{m=0}^\infty\Bigg[\sum_{n=0}^{m} (-1)^{m-n}b_{m,n}\sum_{s=0}^{n}a_{s,m,n}&P_{n-m}^{-(m-s)}(z,w)\\
				+&\sum_{n=1}^{m-1} (-1)^{m-n}b_{n,m}\sum_{s=0}^{n}a_{s,m,n}P_{m-n}^{-(m-s)}(z,w)\Bigg]\, .\nonumber
			\end{align}
			Thus, we show \eqref{eq:PFMasSchauder} if we show absolute and uniform convergence on compact subsets of the series \eqref{eq:absolutconvergentseries}, because this fact allows us to interchange the order of summation. In the following, given a set $K\subseteq\Omega$ we write $\|F\|_K:=\sup\{|F(z)|\,:\, z\in K\}$.

			\begin{proof}[Proof of Theorem \ref{thm:SchauderbasisIntro}: existence]\

				Let $K\subseteq\Omega$ be compact. Without loss of generality, we can assume that there exists $R>1$ such that $K$ is contained in the intersection of $\Omega$ with one of the sets $K_i$, $i=a,b,c,d$, where
					\begin{align*}
						\mathrm{(a)}\,\, K_a&:=\cc{K_R(0)}\times \cc{K_R(0)}\subseteq\C^2\\
						\mathrm{(b)}\,\, K_b&:=\left(\widehat{\C}\setminus K_R(0)\right)\times\left(\widehat{\C}\setminus K_R(0)\right)\subseteq\widehat{\C}^*\times\widehat{\C}^*\\
						\mathrm{(c)}\,\, K_c&:=\cc{K_R(0)}\times\left(\widehat{\C}\setminus K_R(0)\right)\subseteq\C\times\widehat{\C}^*\\
						\mathrm{(d)}\,\, K_d&:=\left(\widehat{\C}\setminus K_R(0)\right)\times\cc{K_R(0)}\subseteq\widehat{\C}^*\times\C\,.
					\end{align*}
					Here, ${K_R(0):=\{z\in\C\,:\,|z|<R\}}$. To see that this is true note that we can cover $K$ with a finite collection of sets of the form $K_i^\circ$, $i=a,b,c,d$, where $K_i^\circ$ denotes the interior of $K_i$.

				Using \eqref{eq:absolutconvergentseries} we obtain			\begin{align}\label{eq:auxSchaudereistence}
					\|F\|_{K}&\leq\sum_{m=0}^\infty\left[\sum_{n=0}^{m}\sum_{s=0}^{n} |b_{m,n}||a_{s,m,n}|\|P_{n-m}^{-(m-s)}\|_{K}+\sum_{n=1}^{m-1}\sum_{s=0}^{n} |b_{n,m}||a_{s,m,n}|\|P_{m-n}^{-(m-s)}\|_{K}\right]\nonumber\\
					&\overset{\text{Corollary} \ref{cor:coefficientsleq1}}{\leq}\sum_{m=0}^\infty\left[\sum_{n=0}^{m}\sum_{s=0}^{n} |b_{m,n}|\|P_{n-m}^{-(m-s)}\|_{K}+\sum_{n=1}^{m-1}\sum_{s=0}^{n} |b_{n,m}|\|P_{m-n}^{-(m-s)}\|_{K}\right]\, .
				\end{align}
				In order to show absolute and uniform convergence of \eqref{eq:absolutconvergentseries} on $K$, we need to estimate the absolute values of the coefficients $b_{n,m}$ and the values of the PFM $P_{m-n}^{-(m-s)}$ on $K$.

				Let $r_1,r_2>0$. Cauchy's integral formula applied to \eqref{eq:OldSchauderCoeff} implies for $n< m$ that
				\begin{align*}
					|b_{n,m}|&=\left\vert\frac{-1}{4\pi^2}\int\limits_{\partial\D}\int\limits_{\partial\D}F\left(z,\frac{w}{1+zw}\right)\frac{dzdw}{z^{n+1}w^{m+1}}\right\vert\\
					&=\left\vert\frac{1}{4\pi^2}\int\limits_{|z|=r_1}\int\limits_{|w|=r_2}F\left(z,\frac{w}{1+zw}\right)\frac{dzdw}{z^{n+1}w^{m+1}}\right\vert\\
					&\leq \frac{1}{r_1^{n}r_2^{m}}\sup\limits_{|z|=r_1,\, |w|=r_2}\left\vert F\left(z,\frac{w}{1+zw}\right)\right\vert \, .
				\end{align*}
				Analogously, it holds for $n\leq m$ that
				\begin{align*}
					|b_{m,n}|\leq \frac{1}{r_1^{m}r_2^{n}}\sup\limits_{|z|=r_1,\, |w|=r_2}\left\vert F\left(\frac{z}{1+zw},w\right)\right\vert \, .
				\end{align*}
				Note that in both cases we used that the functions
				\[(z,w)\mapsto F\left(z,\frac{w}{1+zw}\right)\quad\text{and}\quad(z,w)\mapsto F\left(\frac{z}{1+zw},w\right) \]
				define holomorphic functions on $\C^2$. Since $F\in\mathcal{H}(\Omega)$, it follows that $F$ is continuous on $\Omega$ and attains its maximum modulus on every compact set. Thus, there is some $M>0$ (depending on $r_1$ and $r_2$) such that
				\begin{equation}\label{eq:bestimate}
					|b_{n,m}|\leq \frac{1}{r_1^{n}r_2^{m}}\cdot M\quad\text{and}\quad|b_{m,n}|\leq \frac{1}{r_1^{m}r_2^{n}}\cdot M \, .
				\end{equation}
				\begin{enumerate}[(a)]
					\item Assume $K\subseteq K_a$. Since $K$ is compact, we know that
					\[\delta_K:=\inf\{|1-zw|\, : \, (z,w)\in K\}>0\,.\]
					Thus, also $\delta:=\min\{\delta_K,4\}>0$. Recall that we can express $P_{n-m}^{-(m-s)}$ as a rational function by \eqref{eq:PoissonFourierExplicit}, i.e.\
					\begin{equation}
						P_{n-m}^{-(m-s)}(z,w)
						=
						(-1)^n
						\sum_{k=0}^{n-s}
						\binom{m-s}{k + m-n}
						\binom{m-s}{k}
						\frac{z^{k+m-n} w^{k}}{(1 - z w)^{m-s}}\,.
					\end{equation}
					Hence, we compute
					\begin{align}\label{eq:Poissonestimate}
						\left\|P_{n-m}^{-(m-s)}\right\|_{K}
						&\leq\sup_{(z,w)\in K} \sum_{k=0}^{n-s}
						\binom{m-s}{k+m-n}
						\binom{m-s}{k}\nonumber
						\left\vert\frac{z^{k+m-n} w^{k}}{(1-zw)^{m-s}}\right\vert
						\\
						&\leq \sup_{(z,w)\in K} \sum_{k=0}^{n-s}
						\binom{m-s}{k+m-n}
						\binom{m-s}{k}
						\frac{\left\vert z^{k+m-n} w^{k}\right\vert}{\delta^{m-s}}\nonumber
						\\
						&\leq \sup_{(z,w)\in K} \sum_{k=0}^{n-s}
						\binom{m-s}{k+m-n}
						\binom{m-s}{k}
						\frac{R^{2k+m-n}}{\delta^{m-s}}\nonumber
						\\
						&\leq\frac{4^{m-s}R^{m-n}}{\delta^{m-s}}\sum_{k=0}^{n-s}(R^2)^k =\frac{4^{m-s}R^{m-n}}{\delta^{m-s}}\frac{1-(R^2)^{n-s+1}}{1-R^2}
					\end{align}
					and the same estimate holds for $\left\|P_{m-n}^{-(m-s)}\right\|_{K}$. Together, \eqref{eq:bestimate} and \eqref{eq:Poissonestimate} imply for \eqref{eq:auxSchaudereistence}
					\begin{align*}\label{eq:auxSchaudereistence2}
						&\sum_{m=0}^\infty\left[\sum_{n=0}^{m}\sum_{s=0}^{n} |b_{m,n}|\|P_{n-m}^{-(m-s)}\|_{K}+\sum_{n=1}^{m-1}\sum_{s=0}^{n} |b_{n,m}|\|P_{m-n}^{-(m-s)}\|_{K}\right]\nonumber
						\\
						&\leq\sum_{m=0}^\infty\left[\sum_{n=0}^{m}\sum_{s=0}^{n} \frac{M}{r_1^{m}r_2^{n}}\frac{4^{m-s}R^{m-n}}{\delta^{m-s}}\frac{1-(R^2)^{n-s+1}}{1-R^2}+\sum_{n=1}^{m-1}\sum_{s=0}^{n} \frac{M}{r_1^{n}r_2^{m}}\frac{4^{m-s}R^{m-n}}{\delta^{m-s}}\frac{1-(R^2)^{n-s+1}}{1-R^2}\right]\nonumber
						\\
						&\leq M\sum_{m=0}^\infty\Bigg[\sum_{n=0}^{m} \left(\frac{4R}{r_1\delta}\right)^m\frac{1-(R^2)^{n+1}}{1-R^2}\frac{1}{r_2^n}\sum_{s=0}^{n}\left(\frac{\delta}{4}\right)^s\nonumber\\
						&\hspace{7cm}+\sum_{n=1}^{m-1}\left(\frac{4R}{r_2\delta}\right)^m\frac{1-(R^2)^{n+1}}{1-R^2}\frac{1}{r_1^n}\sum_{s=0}^{n}\left(\frac{\delta}{4}\right)^s\Bigg]\nonumber
						\\
						&\leq M\sum_{m=0}^\infty\left[\sum_{n=0}^{m} \left(\frac{4R}{r_1\delta}\right)^m\frac{1-(R^2)^{n+1}}{1-R^2}\frac{n+1}{r_2^n}+\sum_{n=1}^{m-1}\left(\frac{4R}{r_2\delta}\right)^m\frac{1-(R^2)^{n+1}}{1-R^2}\frac{n+1}{r_1^n}\right]\,.
					\end{align*}

					Our assumption $R>1$ implies
						\begin{align*}
							\sum_{m=0}^\infty&\left[\sum_{n=0}^{m}\sum_{s=0}^{n} |b_{m,n}|\|P_{n-m}^{-(m-s)}\|_{K}+\sum_{n=1}^{m-1}\sum_{s=0}^{n} |b_{n,m}|\|P_{m-n}^{-(m-s)}\|_{K}\right]
							\\
							&\leq \frac{MR^2}{R^2-1}\left[\sum_{m=0}^\infty \left(\frac{4R}{r_1\delta}\right)^m\sum_{n=0}^{\infty}\left(\frac{R^2}{r_2}\right)^n(n+1)+\sum_{m=0}^\infty\left(\frac{4R}{r_2\delta}\right)^m\sum_{n=0}^{\infty}\left(\frac{R^2}{r_1}\right)^n(n+1)\right]\,.
						\end{align*}
						Now choose $r_1,r_2>\max\{4R/\delta,R^2\}>1$ to guarantee convergence.

					\item Assume $K\subseteq K_b$. Then $\widetilde{K}:=\{(1/w,1/z)\,:\, (z,w)\in K\}$ is a compact set and contained in {$K_R(0)\times K_R(0)$}. Now, it follows immediately from \eqref{eq:PoissonFourierExplicit} that
					\begin{equation}
						P_n^{-m}(z,w)=(-1)^mP_n^{-m}\left(\frac{1}{w},\frac{1}{z}\right)
					\end{equation}
					for all $(z,w)\in\Omega$. Hence,
					\[\left\Vert P_{\pm(n-m)}^{-(m-s)}\right\Vert_K=\left\Vert P_{\pm(n-m)}^{-(m-s)}\right\Vert_{\widetilde{K}}\, .\]
					Therefore, we can apply Case (a).

					\item Assume $K\subseteq K_c$. Again, since $K$ is compact, we have
					\[\delta_{K}:=\inf\{|1-zw|\, : \, (z,w)\in K\}>0\,.\]
					It follows that
					\[\delta_{K}':=\inf\left\{\left\vert\frac{1}{w}-z\right\vert\,:\,(z,w)\in K\right\}>0\, .\]
					Moreover, $\widetilde{K}=\{(1/w,1/z)\,:\, (z,w)\in K\}$ is compact, and thus also
					\[\delta_{\widetilde{K}}:=\inf\left\{\left\vert 1-\frac{1}{zw}\right\vert\,:\,(z,w)\in K\right\}>0\,.\]
					Define $\delta=\min\{\delta_K,\delta_K',\delta_{\widetilde{K}},4\}$. Using
					\[\left\vert\frac{zw}{1-zw}\right\vert=\left\vert\frac{1}{1-\frac{1}{zw}}\right\vert\leq\frac{1}{\delta_{\widetilde{K}}}\]
					we compute
					\begin{align}\label{eq:Poissonestimatec1}
						\left\|P_{n-m}^{-(m-s)}\right\|_{K}
						&\leq\sup_{(z,w)\in K} \sum_{k=0}^{n-s}
						\binom{m-s}{k+m-n}
						\binom{m-s}{k}
						\left\vert\frac{z^{k+m-n} w^{k}}{(1-zw)^{m-s}}\right\vert\nonumber
						\\
						&\leq \sup_{(z,w)\in K} \sum_{k=0}^{n-s}
						\binom{m-s}{k+m-n}
						\binom{m-s}{k}
						\cdot\begin{cases}
							\left\vert\frac{(zw)^{m-s}}{(1-zw)^{m-s}}\right\vert\cdot R^{m-n},\quad \text{if } |zw|> 1\\
							\left\vert\frac{1}{(1-zw)^{m-s}}\right\vert\cdot R^{m-n},\quad \text{if } |zw|\leq 1\\
						\end{cases}\nonumber
						\\
						&\leq \sup_{(z,w)\in K} \sum_{k=0}^{n-s}
						\binom{m-s}{k+m-n}
						\binom{m-s}{k}
						\cdot\begin{cases}
							\frac{1}{\delta_{\widetilde{K}}^{m-s}} R^{m-n},\quad \text{if } |zw|> 1\\
							\frac{1}{\delta_K^{m-s}} R^{m-n},\quad \text{if } |zw|\leq 1\\
						\end{cases}\nonumber
						\\
						&\leq \sum_{k=0}^{n-s}\frac{4^{m-s}R^{m-n}}{\delta^{m-s}}\nonumber
						\\
						&\leq n\frac{4^{m-s}R^{m-n}}{\delta^{m-s}}\, .
					\end{align}

					Similarly, we compute
					\begin{align}\label{eq:Poissonestimatec2}
						\left\|P_{m-n}^{-(m-s)}\right\|_{K}&\leq\sup_{(z,w)\in K} \sum_{k=0}^{n-s}
						\binom{m-s}{k+m-n}
						\binom{m-s}{k}
						\left\vert\frac{z^{k} w^{k+m-n}}{(1-zw)^{m-s}}\right\vert\nonumber
						\\
						&\leq\sup_{(z,w)\in K} \sum_{k=0}^{n-s}
						\binom{m-s}{k+m-n}
						\binom{m-s}{k}
						\left\vert\frac{z^{k} }{w^{n-s-k}(1/w-z)^{m-s}}\right\vert\nonumber
						\\
						&\leq\sup_{(z,w)\in K} \sum_{k=0}^{n-s}
						\binom{m-s}{k+m-n}
						\binom{m-s}{k}
						R^{n-s}\frac{1}{\delta^{m-s}}\nonumber
						\\
						&\leq \sum_{k=0}^{n-s}\frac{4^{m-s}R^{n-s}}{\delta^{m-s}}\nonumber
						\\
						&\leq n\frac{4^{m-s}R^{n-s}}{\delta^{m-s}}\, .
					\end{align}
					Inserting \eqref{eq:bestimate}, \eqref{eq:Poissonestimatec1} and \eqref{eq:Poissonestimatec2} in \eqref{eq:auxSchaudereistence} yields
					\begin{align*}
						\sum_{m=0}^\infty&\left[\sum_{n=0}^{m}\sum_{s=0}^{n} |b_{m,n}|\|P_{n-m}^{-(m-s)}\|_{K}+\sum_{n=1}^{m-1}\sum_{s=0}^{n} |b_{n,m}|\|P_{m-n}^{-(m-s)}\|_{K}\right]
						\\
						&\leq\sum_{m=0}^\infty\left[\sum_{n=0}^{m}\sum_{s=0}^{n} \frac{M}{r_1^mr_2^n}n\frac{4^{m-s}R^{m-n}}{\delta^{m-s}}+\sum_{n=1}^{m-1}\sum_{s=0}^{n} \frac{M}{r_1^nr_2^m}n\frac{4^{m-s}R^{n-s}}{\delta^{m-s}}\right]
						\\
						&\leq M\sum_{m=0}^\infty\left[\sum_{n=0}^{m} \left(\frac{4R}{r_1\delta}\right)^{m}\frac{n}{(r_2R)^n}\sum_{s=0}^{n}\left(\frac{\delta}{4}\right)^s+\sum_{n=1}^{m-1}\left(\frac{4}{r_2\delta}\right)^{m}\left(\frac{R}{r_1}\right)^n n\sum_{s=0}^{n}\left(\frac{\delta}{4}\right)^s\right]
						\\
						&\leq M\sum_{m=0}^\infty\left[\sum_{n=0}^{m} \left(\frac{4R}{r_1\delta}\right)^{m}\frac{n(n+1)}{(r_2R)^n}+\sum_{n=1}^{m-1}\left(\frac{4}{r_2\delta}\right)^{m}\left(\frac{R}{r_1}\right)^n n(n+1)\right]
						\\
						&\leq M\left[\sum_{m=0}^\infty \left(\frac{4R}{r_1\delta}\right)^{m}\sum_{n=0}^{\infty}\frac{n(n+1)}{(r_2R)^n}+\sum_{m=0}^\infty\left(\frac{4}{r_2\delta}\right)^{m}\sum_{n=0}^{\infty}\left(\frac{R}{r_1}\right)^n n(n+1)\right]\,.
					\end{align*}
					Choose $r_1>\max\{4R/\delta,R\}$ and $r_2>\max\{4/\delta,1/R\}$ to guarantee convergence.

					\item Assume $K\subseteq K_d$. Then we can argue similarly as in Case (c).\qedhere
				\end{enumerate}
			\end{proof}

			\begin{remark}\label{rem:PFMSchauderbasis}
				By slight modification of the previous proof we obtain an \emph{orthonormal} Schauder basis of $\mathcal{H}(\Omega)$: in view of Corollary \ref{cor:eigenspaceHilbertspaceIntro} defining
						\begin{equation}
							Y_n^{-m}:=\sqrt{2m+1}\binom{m}{|n|}^{-1/2}\binom{-m-1}{|n|}^{1/2}P_n^{-m}
						\end{equation}
						yields mutually \emph{orthonormal} functions with respect to $\langle\cdot{,}\cdot\rangle_\Omega$. Therefore, we obtain an orthonormal set ${\{Y_n^{-m}\,:\, m\in\N_0,\, n\in\Z,\, |n|\leq m\}}$. This set is a Schauder basis of $\mathcal{H}(\Omega)$, too, because a closer look at the proof of Corollary \ref{cor:coefficientsleq1} reveals that the absolute values of the Schauder coefficients $a_{s,m,n}$ from Lemma \ref{lem:OldSchauderasPFM} are in fact less than or equal to $\binom{m-s}{n-s}$. Using this, one can copy the proof of Theorem \ref{thm:SchauderbasisIntro}.
			\end{remark}

			\section{Complex spherical harmonics}\label{sec:CSH}\

		Before we introduce the \emph{complex spherical harmonics} in Section \ref{sub:CSH}, we discuss a different model of $\Omega$ first: the complex two-sphere $\mathbb{S}_\C^2$
		\begin{equation}
			\mathbb{S}_\C^2 := \left\{\mathbf{z}=(z_1,z_2,z_3) \in \C^3 \,\big|\, z_1^2 + z_2^2 + z_3^2 = 1\right\}
		\end{equation}
		 from \eqref{eq:complextwosphere}. After that, in Section \ref{sub:RSHtoCSH}, we give some background about classical spherical harmonics. In general, for a solid introduction to that theory, see e.g.\ \cite[Ch.~5]{axler2001} and \cite[Ch.~3]{simon15}.

			\subsection{The complex two-sphere}\label{sub:complexsphere}\

			\medskip

			The complex two-sphere $\mathbb{S}_\C^2$ is a closed but unbounded subset of $\C^3$, thus not compact. The domain $\Omega$ is biholomorphically equivalent to $\mathbb{S}_\C^2$ by way of the biholomorphic map ${S \colon \Omega \longrightarrow \mathbb{S}_{\C}^2}$,
			\begin{equation}
				\label{eq:Stereographic}
				S(z,w)
				=
				\begin{cases}\displaystyle
					\left( \frac{z - w}{1 - zw}, -i \frac{z + w}{1 - zw}, -\frac{1 + z w}{1 - z w} \right) & \phantom{ if \quad } (z,w) \in \C^2\,,\; zw\not=1\,, \\[4mm]
					\displaystyle  \left(1/z ,i/z, 1 \right) & \text{ if \quad } z \in \widehat{\C}^*\,, \;w=\infty\,,\\[4mm]
					\displaystyle      \left( -1/w, i/w,1 \right) & \phantom{ if \quad } w \in \widehat{\C}^*\,, \;z=\infty\,.
				\end{cases}
			\end{equation}
			Note the difference in our notation between $\mathbf{z}\in\C^3$ and $z\in\widehat{\C}$. The inverse map of  $S \colon \Omega \longrightarrow \mathbb{S}_{\C}^2$ is given by $S^{-1} \colon \mathbb{S}_{\C}^2 \to \Omega$,
			\begin{equation}\label{eq:inversecomplexstereo}
				S^{-1}(z_1,z_2,z_3)
				=
				\begin{cases}\displaystyle
					\left( \frac{z_1+i z_2}{1-z_3}, - \frac{z_1 -i z_2}{1-z_3} \right) & \phantom{ if \quad } z_3 \not=1\,,  \\[4mm]
					\displaystyle  \left(1/z_1,\infty \right) & \text{ if \quad } z_3=1\,, \;z_2=i z_1\,,\\[4mm]
					\displaystyle      \left( \infty, -1/z_1  \right)& \phantom{ if \quad } z_3=1\,, \;z_2=-i z_1\,.
				\end{cases}
			\end{equation}
			It is natural to call $S^{-1} \colon \mathbb{S}_{\C}^2 \to \Omega$ the ``complex stereographic projection'', since if we denote by
			\[  \mathbb{S}_{\R}^2 \coloneqq \mathbb{S}^2_{\C} \cap \R^3 = \big\{ \mathbf{x}=(x_1, x_2, x_3) \in \R^3 \,|\, x_1^2 + x_2^2 + x_2^3 = 1\big\}\]
			the euclidean two-sphere in $\R^3$, then
			\[
			S^{-1}(\mathbf{x})=\left( \frac{x_1+i x_2}{1-x_3}, -\frac{x_1-i x_2}{1-x_3} \right) \quad \text{ for all } \mathbf{x}=(x_1,x_2,x_3) \in \mathbb{S}_{\R}^2 \, \]
			and
			\[ \mathbb{S}^2_{\R} \to \widehat{\C} \, , \qquad \mathbf{x} \mapsto  \frac{x_1+i x_2}{1-x_3}\]
			is the standard stereographic projection of $\mathbb{S}^2_{\R}$ onto $\widehat{\C}$. In particular,
			$S^{-1}$ maps $\mathbb{S}_{\R}^2$ onto the ``rotated diagonal'' $\rotdiagonal$ (see \eqref{eq:rotdiagonal}).
			\begin{remark}\label{rem:hyperboloid}
				Let $H_{\R}^2$ be the image of the hyperboloid ${\{ \mathbf{x}=(x_1,x_2,x_3) \in \R^3 \, : \, -x_1^2-x_2^2+x_3^2=1\}}$ under the biholomorphic map given by $(z_1,z_2,z_3) \mapsto (iz_1,iz_2,z_3)$. Then $S^{-1}$ maps ``half of'' $H_\R^2$ onto the diagonal $\diagonal$. More precisely, the lower half $\{\mathbf{x}=(x_1,x_2,x_3) \in H_{\R}^2  \, : \,  x_3 \le -1\}$
				is mapped onto $\diagonal$ and the upper half $\{\mathbf{x}=(x_1,x_2,x_3) \in H_{\R}^2  \, : \,  x_3 \ge 1\}$
				onto $\{(z, \overline{z}) \, : \, z \in \widehat{\C} \setminus \overline{\D}\}$.
			\end{remark}
			Moreover, for $\mathbf{z}\in\C^3$ with $z_1^2+z_2^2+z_3^2\in\C\setminus(-\infty,0]$ we define the ``complexified length of vectors in $\C^3$'' by
				\begin{equation*}
					\vert\mathbf{z}\vert_\C:=\sqrt{z_1^2+z_2^2+z_3^2}\,.
				\end{equation*}
				Here, the square root is to be understood as the principal branch of the square root with branch cut along the non-positive real axis. It follows that for every such $\mathbf{z}$ we have $\mathbf{z}/\vert\mathbf{z}\vert_\C\in\mathbb{S}_\C^2$.

			\subsection{From real to complex spherical harmonics}\label{sub:RSHtoCSH}\

			\medskip

			When studying the real euclidean Laplace equation in $\R^d$, $d\geq2$, i.e
			\begin{equation}\label{eq:harmonic}
				\Delta_{\R^d}f(\mathbf{x}):=\sum_{k=1}^d\frac{\partial^2}{\partial x_k^2}f(\mathbf{x})=0\, \quad \mathbf{x}=(x_1,\dots,x_d)\in\R^d\, ,
			\end{equation}
			where $f:\R^d\to\C$ is a twice differentiable function, one common approach is to work with spherical harmonics. The name ``spherical'' stems from the fact that spherical harmonics ``live'' on the real $d-1$ sphere, i.e.\ they are functions defined on
			\begin{equation*}
				\mathbb{S}_\R^{d-1} := \left\{\mathbf{x}=(x_1,\dots,x_d) \in \R^d \,\big|\, x_1^2 + \dots + x_d^2 = 1\right\} \, .
			\end{equation*}
			More precisely, for $m\in\N_0$ and an \emph{harmonic} and \emph{$m$-homogeneous} polynomial $p:\R^d\to\C$, i.e.\ $p$ is a polynomial fulfilling \eqref{eq:harmonic} and
			\begin{equation}\label{eq:sphericalhomogeneous} 
				p(r\mathbf{x})=r^m p(\mathbf{x})\, ,\quad\text{for all }r>0\; ,
			\end{equation}
			the restriction $q:=p\vert_{\mathbb{S}^{d-1}_\R}$ is called a (real or classical) \emph{spherical harmonic} of degree $m$.
			\begin{remark}
				Recall \eqref{eq:nhomogeniety} where we identified the PFM as $n$-homogeneous functions. Homogeneity in the sense of \eqref{eq:sphericalhomogeneous} is different from the understanding in \eqref{eq:nhomogeniety}. Throughout this section and the following we will use the term \emph{homogeneous function} for functions fulfilling \eqref{eq:sphericalhomogeneous} because this definition is standard in the theory of (real) spherical harmonics. Moreover, homogeneity in the sense of \eqref{eq:nhomogeniety} will be referred to as \emph{$\Omega$-homogeneity}.
			\end{remark}
			For functions defined on the real sphere, it is useful to work in spherical coordinates by which we mean identifying each $\mathbf{x}\in\R^{d}\setminus\{0\}$ with a tuple $(r,\sigma)\in\R_+\times\mathbb{S}^{d-1}_\R$. This point of view enables one to always construct a harmonic $m$-homogeneous polynomial $p$ to a given spherical harmonic $q$ of degree $m$ by setting $p(r,\sigma)=r^mq(\sigma)$. In particular, if we consider the spherical part of the real euclidean Laplacian, i.e.\ the restriction of $\Delta_{\R^d}$ to $\mathbb{S}_\R^{d-1}$ which we denote by $\Delta_{\mathbb{S}_{\R}^{d-1}}$, then it is an easy calculation to verify the following characterization:
			\begin{equation}\label{eq:realsphericalharmcharacterization}
				\Delta_{\R^d}p=0\quad\iff\quad \Delta_{\mathbb{S}_\R^{d-1}}q=-m(m+d-2)q\, .
			\end{equation}
			Now note that for the special case $d=3$ a different description of $\mathbb{S}_\R^2$ is given by means of the stereographic projection, i.e.\ we can identify the real two-sphere with the Riemann sphere $\widehat{\C}$. It follows immediately that $\Delta_{\mathbb{S}_\R^{2}}$ translates to $\Delta_{\widehat{\C}}$ from \eqref{eq:sphericalLaplaceIntro}. Since there actually is a counterpart of the two-sphere as well as the stereographic projection in our complexified theory, see the previous Section \ref{sub:complexsphere}, we can use this approach together with \eqref{eq:realsphericalharmcharacterization} to define complexified, or in short complex, spherical harmonics.

			\subsection{Complex spherical harmonics}\label{sub:CSH}\

			\medskip

			Let $F \in \mathcal{H}(\Omega)$ and fix $m\in\N_0$. By means of the inverse of the complex stereographic projection we define a holomorphic function by
				\begin{equation}\label{eq:CSHonsphereonly}
					G:\mathbb{S}_\C^2\rightarrow\C\,,\quad G:=F\circ S^{-1}\,.
				\end{equation}
				Using this function we define another holomorphic function
				\begin{equation}\label{eq:complexsphericalharmonic}
					\hat{G}:\{\mathbf{z}\in\C^3\,:\,\mathbf{z}=t\sigma,\, t\in\C,\, \mathrm{Re}\,t>0,\,\sigma\in\mathbb{S}_\C^2\}\rightarrow\C\,,\quad \hat{G}(\mathbf{z})=\vert\mathbf{z}\vert_\C^{m} (F\circ S^{-1})\left(\frac{\mathbf{z}}{\vert\mathbf{z}\vert_\C}\right)\,.
				\end{equation}
				Note that the domain of $\hat{G}$ is biholomorphically equivalent to the cartesian product of the right half plane with $\mathbb{S}_\C^2$. Furthermore, $\hat{G}$ fulfils
				\[\hat{G}(r\sigma)=r^m\hat{G}(\sigma)\quad \text{for all }r>0\,,\]
				and a computation yields
				\begin{equation} \label{eq:sphericalharmonics}
					\Delta_{\C^3} \hat{G}(\mathbf{z}):=\sum_{k=1}^3\frac{\partial^2}{\partial z_k^2}\hat{G}(\mathbf{z})= \vert\mathbf{z}\vert_\C^{2-m} \left(\left( m (m+1)F - \left(1- \, z w \right)^2 \partial_z\partial_w F \right)\circ S^{-1}\right)\left(\frac{\mathbf{z}}{\vert\mathbf{z}\vert_\C}\right) \, .
				\end{equation}
				This identity immediately implies
				\begin{equation*}\label{eq:complexsphericalharmcharacterization}
					\Delta_{\C^3}\hat{G}=0\quad\iff\quad \Delta_{zw}F=4m(m+1)F\, .
				\end{equation*}
				By the definition of the eigenspaces $ X_{4m(m+1)}(\Omega)$ of $\Delta_{zw}$ in \eqref{eq:eigenspaceIntro}, both statements are also equivalent to $F\in X_{4m(m+1)}(\Omega)$. Moreover, in view of Lemma \ref{lem:Jacobipolynomials} we can deduce the following.
				\begin{lemma}\label{lem:inducedharmonicpolynomial}
					Let $m\in\N_0$, $F\in X_{4m(m+1)}(\Omega)$ and define $\hat{G}$ as in \eqref{eq:complexsphericalharmonic}. Then $\hat{G}$ is an $m$-homogeneous polynomial in $\mathbf{z}\in\C^3$. In particular, if $F=P_{\pm n}^{-m}$ for $0\leq n\leq m$, then it holds that
					\begin{align}\label{eq:CSHJacobi}
						\hat{G}\Big\vert_{\mathbb{S}_\C^2}(\mathbf{z})&=(-1)^{m}\left(\frac{\mp z_1+iz_2}{2}\right)^n\Jacobi_{ m-n}^{(n,n)}\left(z_3\right) \, .
					\end{align}
				Here, $\Jacobi_{ m-n}^{(n,n)}$ denotes the Jacobi polynomials from \eqref{eq:Jacobi}.
				\end{lemma}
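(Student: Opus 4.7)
The strategy is to reduce the statement to the case $F=P_{\pm n}^{-m}$ by linearity, to verify the explicit formula \eqref{eq:CSHJacobi} via a direct computation based on Lemma~\ref{lem:Jacobipolynomials}, and finally to pass from $\mathbb{S}_\C^2$ to a genuine polynomial on all of $\C^3$ by exploiting a parity property of Jacobi polynomials with equal parameters. As recalled in the discussion following \eqref{eq:eigenspaceIntro}, the eigenspace $X_{4m(m+1)}(\Omega)$ is $(2m+1)$-dimensional with basis $\{P_{\pm n}^{-m}\,:\,0\le n\le m\}$; since the assignment $F\mapsto\hat G$ is linear in $F$, it is therefore enough to establish \eqref{eq:CSHJacobi}.

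For $F=P_n^{-m}$ with $0\le n\le m$ I would apply Lemma~\ref{lem:Jacobipolynomials} to rewrite $P_n^{-m}$ in terms of $\Jacobi_{m-n}^{(n,n)}$ and then substitute $(z,w)=S^{-1}(\mathbf{z})$ from \eqref{eq:inversecomplexstereo} with $\mathbf{z}\in\mathbb{S}_\C^2$. Using the sphere identity $z_1^2+z_2^2=(1-z_3)(1+z_3)$, a short calculation gives
\[
zw=-\frac{1+z_3}{1-z_3}\,,\qquad 1-zw=\frac{2}{1-z_3}\,,\qquad \frac{zw+1}{zw-1}=z_3\,,\qquad w^n=\left(-\frac{z_1-iz_2}{1-z_3}\right)^n,
\]
so that all $(1-z_3)$-factors cancel and one is left with
\[
(P_n^{-m}\circ S^{-1})(\mathbf{z})=(-1)^m\left(\frac{-z_1+iz_2}{2}\right)^n\Jacobi_{m-n}^{(n,n)}(z_3)\,,
\]
which is \eqref{eq:CSHJacobi} for the upper sign. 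The lower sign follows at once from the symmetry $P_{-n}^{-m}(z,w)=P_n^{-m}(w,z)$ (Remark~\ref{rem:PFMIntro}) together with the observation that exchanging the two arguments of $S^{-1}(\mathbf{z})$ is the same as replacing $\mathbf{z}$ by $(-z_1,z_2,z_3)$.

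Finally, combining \eqref{eq:complexsphericalharmonic} with the formula above yields
\[
\hat G(\mathbf{z})=(-1)^m\,|\mathbf{z}|_\C^{m-n}\left(\frac{\mp z_1+iz_2}{2}\right)^n\Jacobi_{m-n}^{(n,n)}\!\bigl(z_3/|\mathbf{z}|_\C\bigr)\,.
\]
The only genuinely non-routine point, and the main obstacle of the argument, is that the factor $|\mathbf{z}|_\C=\sqrt{z_1^2+z_2^2+z_3^2}$ is a priori branched, so it is not immediate that $\hat G$ extends to a polynomial on $\C^3$. This is resolved by the parity identity $\Jacobi_{m-n}^{(n,n)}(-x)=(-1)^{m-n}\Jacobi_{m-n}^{(n,n)}(x)$, valid because the two Jacobi parameters coincide: only monomials $x^j$ with $j\equiv m-n\pmod 2$ appear in $\Jacobi_{m-n}^{(n,n)}(x)$, so every summand of $\hat G(\mathbf{z})$ carries an even power $|\mathbf{z}|_\C^{m-n-j}=(z_1^2+z_2^2+z_3^2)^{(m-n-j)/2}$ and is therefore a polynomial in $\mathbf{z}$. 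Hence $\hat G$ extends to an $m$-homogeneous polynomial on $\C^3$, completing the proof.
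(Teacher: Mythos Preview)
Your argument is correct and follows the same overall strategy as the paper: reduce to the basis $\{P_{\pm n}^{-m}\}$ by linearity, verify \eqref{eq:CSHJacobi} by combining Lemma~\ref{lem:Jacobipolynomials} with the explicit form of $S^{-1}$, and then check that $|\mathbf{z}|_\C^{m-n}\Jacobi_{m-n}^{(n,n)}(z_3/|\mathbf{z}|_\C)$ is a polynomial in $\mathbf{z}$.

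The one genuine difference lies in this last step. The paper proceeds by induction on $m$, invoking the three-term recursion \cite[Eq.~22.7.1]{abramowitz1984} for Jacobi polynomials to propagate the polynomial property. Your approach is more direct: you exploit the symmetry $\Jacobi_k^{(\alpha,\beta)}(x)=(-1)^k\Jacobi_k^{(\beta,\alpha)}(-x)$ (already quoted in the proof of Lemma~\ref{lem:Jacobipolynomials}) in the special case $\alpha=\beta=n$ to conclude that $\Jacobi_{m-n}^{(n,n)}$ has the parity of $m-n$, so that every occurrence of $|\mathbf{z}|_\C$ in $\hat G$ is to an even non-negative power and hence polynomial. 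This avoids the induction entirely and uses only an identity already present in the paper, so it is a cleaner route to the same conclusion; the paper's inductive argument, on the other hand, would generalise more readily to Jacobi parameters that are not equal.
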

				\begin{proof}
					Denote the domain of $\hat{G}$ given in \eqref{eq:complexsphericalharmonic} by $\Lambda$. By definition, $\hat{G}$ is $m$-homogeneous and holomorphic on $\Lambda$. In view of the PFM being a basis of the finite dimensional space $X_{4m(m+1)}(\Omega)$ it suffices to show that $\hat{G}$ coincides with a polynomial on ${\Lambda}$ whenever $F=P_{\pm n}^{-m}$. However, since this requires some tedious computations, we only stress the important ideas: one verifies \eqref{eq:CSHJacobi} by rewriting $P_{\pm n}^{-m}$ as a Jacobi polynomial (see Lemma \ref{lem:Jacobipolynomials}) and using the explicit formula for $S^{-1}$. Then by induction over $m$ one can check that
					\[(-1)^{m}|\mathbf{z}|_\C^{m-n}\left(\frac{\mp z_1+iz_2}{2}\right)^n\Jacobi_{ m-n}^{(n,n)}\left(\frac{z_3}{|\mathbf{z}|_\C}\right)\]
					is indeed a polynomial. Here, the recursion formula \cite[Eq.~22.7.1]{abramowitz1984} for Jacobi polynomials is useful.
			\end{proof}
			Therefore, the functions $F$ and $\hat{G}$ fulfil all the properties that we have singled out about (real) spherical harmonics $q$ and the corresponding harmonic homogeneous polynomials $p$ in Section~\ref{sub:RSHtoCSH}. More precisely,
			\begin{enumerate}[(i)]
				\item if $\hat{G}$ is an $m$-homogeneous polynomial in $\C^3$ fulfilling $\Delta_{\C^3}\hat{G}=0$, then $\hat{G}|_{\R^3}$ is an \mbox{$m$-homogeneous} polynomial in $\R^3$ fulfilling $\Delta_{\R^3}\hat{G}=0$.
				\item if $F\in X_{4m(m+1)}(\Omega)$, then $f(z):=F(z,-\overline{z})$ fulfils $\Delta_{\widehat{\C}}f=4m(m+1)f$. Moreover, $S^{-1}$ maps $\mathbb{S}_{\R}^2$ bijectively onto the ``rotated diagonal'' $\rotdiagonal$ as was noted in Section \ref{sub:complexsphere}.
			\end{enumerate}
			This justifies the following definition.

			\begin{definition}[Complex spherical harmonics]
				Let $m\in\N_0$ and $F\in X_{4m(m+1)}(\Omega)$. Let $G_F:=G\in \mathcal{H}(\mathbb{S}_\C^2)$ where $G$ is the function induced by $F$ in \eqref{eq:CSHonsphereonly}. We define the \emph{space of complex spherical harmonics of degree $m$} as the linear hull of all $G_F$, i.e.
				\begin{equation*}
					\mathrm{CSH}(m):=\mathrm{span}\{G_F \, : \, F\in X_{4m(m+1)}(\Omega)\}\,.
				\end{equation*}
				Every $Q\in\mathrm{CSH}(m)$ is called \emph{complex spherical harmonic of degree $m$}. Further, define
				\begin{equation*}
					\mathrm{CSH}:=\left\{\sum_{k=1}^N c_k F_k\,:\, N\in\N,\, F_k\in \mathrm{CSH}(m)\text{ for some }m\in\N_0,\, c_k\in\C\right\}\,.
				\end{equation*}
				Every function $Q\in\mathrm{CSH}$ is called \emph{complex spherical harmonic}. Moreover, we denote by $\mathrm{CSH}^{-}$ the closure of $\mathrm{CSH}$ with respect to the locally uniform topology on $\mathbb{S}_\C^2$.
			\end{definition}
			Essentially, since $S^{-1}$ is bijective, we can now use our knowledge about $\mathcal{H}(\Omega)$ and, in particular, the PFM in order to understand the complex spherical harmonics. Let $m\in\N_0$, {$| n|\leq m$}. Motivated by \cite{HeinsMouchaRoth3} where it is shown that the PFM play a special role for the eigenspace theory of $\Delta_{zw}$ we introduce their spherical counterpart.
			\begin{definition}
				Let $m\in\N_0$ and $n\in\Z$. The \emph{$n$-th spherical Poisson Fourier mode (sPFM) of its $(-m)$-th power} is the function $Q_n^{-m}:\mathbb{S}_\C^2\to\C$ given by
				\begin{equation*}\label{eq:sphericalPoissonFouier}
					Q_n^{-m}=P_n^{-m}\circ S^{-1} \; .
				\end{equation*}
			\end{definition}
			Since the PFM $\{P_n^{-m}\,:\,|n|\leq m\}$ span $X_{4m(m+1)}(\Omega)$, see \cite[Th. 8.1]{HeinsMouchaRoth3}, we conclude that $\mathrm{CSH}(m)$ is spanned by the sPFM $\{Q_n^{-m}\,:\,|n|\leq m\}$ and has dimension $2m+1$. Note that this matches precisely the dimension of the space of real spherical harmonics of degree $m$ in $\R^3$, see \cite[Cor.~3.5.6]{simon15}. Further, we can again translate the orthogonality of the Jacobi polynomials to our setting.
			\begin{corollary}\label{cor:Jacobiorthogonalityspherical}
				The sPFM $Q_n^{-m}$ fulfil the following orthogonality property
				\begin{equation*}\label{eq:JacobiorthogonalitySphere}
					\int\limits_{\mathbb{S}_\R^2} Q_n^{-m}(\mathbf{x})\cc{Q_q^{-p}(\mathbf{x})}\,d\mathbf{x}=\binom{m}{n}\binom{m+n}{n}^{-1}\frac{\delta_{m,p}\delta_{n,q}}{2m+1}\, .
				\end{equation*}
				Here, $d\mathbf{x}$ denotes the normalized area measure on $\mathbb{S}_\R^2$.
			\end{corollary}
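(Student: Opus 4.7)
The plan is to reduce the claim to Proposition~\ref{prop:Jacobiorthogonality}, which already provides the analogous orthogonality relation on $\widehat{\C}$, by pulling the integral back from $\mathbb{S}_\R^2$ to the rotated diagonal $\rotdiagonal$ via the complex stereographic projection. All of the work is then concentrated in identifying the pushforward of the normalized area on $\mathbb{S}_\R^2$.

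First I would recall from Section~\ref{sub:complexsphere} that $S^{-1}$ restricts to a bijection between $\mathbb{S}_\R^2$ and the rotated diagonal $\rotdiagonal$: for $\mathbf{x}\in\mathbb{S}_\R^2$ one has $S^{-1}(\mathbf{x})=(z(\mathbf{x}),-\overline{z(\mathbf{x})})$, where $z(\mathbf{x})=(x_1+ix_2)/(1-x_3)$ is the classical stereographic projection onto $\widehat{\C}$. By the definition $Q_n^{-m}=P_n^{-m}\circ S^{-1}$ this yields the pointwise identity $Q_n^{-m}(\mathbf{x})=P_n^{-m}(z(\mathbf{x}),-\overline{z(\mathbf{x})})$.

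Next I would verify that under the parametrization $\mathbf{x}\leftrightarrow z(\mathbf{x})$ the normalized area measure $d\mathbf{x}$ on $\mathbb{S}_\R^2$ transports to precisely the measure $\frac{i}{\pi}\frac{dz\,d\cc z}{(1+|z|^2)^2}$ appearing in Proposition~\ref{prop:Jacobiorthogonality}. One may do this by the classical Jacobian computation for the stereographic projection, which yields $dS = \frac{4\,dA(z)}{(1+|z|^2)^2}$ for the (unnormalized) surface form, and then normalize by the total area $4\pi$; alternatively, both measures are $\mathrm{SO}(3)$-invariant probability measures on the two-sphere, and such a measure is unique, so they must agree. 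The polar-coordinate reduction already carried out in the proof of Proposition~\ref{prop:Jacobiorthogonality} confirms that $\frac{i}{\pi}\frac{dz\,d\cc z}{(1+|z|^2)^2}$ is a probability measure, fixing the normalization.

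Combining these two observations, the change of variables gives
\begin{equation*}
\int_{\mathbb{S}_\R^2} Q_n^{-m}(\mathbf{x})\,\overline{Q_q^{-p}(\mathbf{x})}\,d\mathbf{x}
\;=\;\frac{i}{\pi}\int_{\widehat{\C}} P_n^{-m}(z,-\cc z)\,\overline{P_q^{-p}(z,-\cc z)}\,\frac{dz\,d\cc z}{(1+|z|^2)^2}\,,
\end{equation*}
and Proposition~\ref{prop:Jacobiorthogonality} evaluates the right-hand side to $\binom{m}{|n|}\binom{m+|n|}{|n|}^{-1}\frac{\delta_{m,p}\delta_{n,q}}{2m+1}$, matching the claim (the index $n$ in the statement is tacitly $|n|$, or one may split the sign cases using the symmetry $P_n^{-m}(z,w)=P_{-n}^{-m}(w,z)$ from Remark~\ref{rem:PFMIntro}(c)).

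There is no real obstacle: the only point requiring care is the bookkeeping of the measure normalization under stereographic projection, i.e.\ checking that the factor $\frac{i}{\pi}(1+|z|^2)^{-2}$ in Proposition~\ref{prop:Jacobiorthogonality} is exactly what is needed to turn the Lebesgue-type integral on $\widehat{\C}$ into the normalized area integral on $\mathbb{S}_\R^2$. Once this is in place, the corollary is immediate.
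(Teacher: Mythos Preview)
Your proposal is correct and follows exactly the approach of the paper, which dispatches the corollary in one line by observing that it is the spherical version of Proposition~\ref{prop:Jacobiorthogonality} since $S(\rotdiagonal)=\mathbb{S}_\R^2$. You have simply made explicit the measure-pushforward step that the paper leaves to the reader.
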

			\begin{proof}
				This is the spherical version of Proposition \ref{prop:Jacobiorthogonality} since $S(\rotdiagonal)=\mathbb{S}_\R^2$.
			\end{proof}
			This result allows us to explicitly describe the space(s) of complex spherical harmonics.
			\begin{theorem}\phantomsection\label{prop:SchauderSphere}
				\begin{itemize}
					\item[(a)] The set $\{Q_{n}^{-m} \, : \, m\in\N_0,\, n\in\Z,\, |n|\leq m\}$ is a Schauder basis of $\mathrm{CSH}^-$. In particular, $\mathrm{CSH}^-=\mathcal{H}(\mathbb{S}_\C^2)$ and each $Q \in \mathcal{H}(\mathbb{S}_\C^2)$ has a unique representation as
					\begin{equation*}\label{eq:sPFMasSchauder}
						Q(\mathbf{z})=\sum \limits_{m=0}^{\infty}\sum\limits_{n=-m}^{m} c_{n,m} Q_n^{-m}(\mathbf{z}) \, .
					\end{equation*}
					This series converges absolutely and locally uniformly in $\mathbb{S}_\C^2$, and the Schauder coefficients $c_{\pm n,m}$ of $Q$ are given by
					\begin{equation*}\label{eq:SchaudercoefficientSphere}
						c_{\pm n,m}=\frac{2m+1}{2}\binom{m}{n}^{-1}\binom{m+n}{n}\int\limits_{\mathbb{S}_\R^2} Q(\mathbf{x})\cc{Q_{\pm n}^{-m}(\mathbf{x})}\,d\mathbf{x} \, .
					\end{equation*}
					\item[(b)] The map $\langle\cdot{,}\cdot\rangle_{\mathbb{S}_\R^2}\colon\mathcal{H}(\mathbb{S}_\C^2)\times\mathcal{H}(\mathbb{S}_\C^2)\to\C$ defined by
					\begin{equation}\label{eq:innerproduct}
						\langle Q,P\rangle_{\mathbb{S}_\R^2}=\int\limits_{\mathbb{S}_\R^2} Q(\mathbf{x})\cc{P(\mathbf{x})}\,d\mathbf{x}
					\end{equation}
					is an inner product on $\mathcal{H}(\mathbb{S}_\C^2)$. In particular, $(\mathrm{CSH}(m),\langle\cdot{,}\cdot\rangle_{\mathbb{S}_\R^2})$ is a $2m+1$-dimensional Hilbert space for all $m\in\N_0$ and the sPFM $\{Q_{n}^{-m}\,:\, |n|\leq m\}$ form an orthogonal basis. Moreover, the spaces $\mathrm{CSH}(m)$ and $\mathrm{CSH}(p)$, $m,p\in\N_0$, are mutually orthogonal with respect to $\langle\cdot{,}\cdot\rangle_{\mathbb{S}_\R^2}$.
				\end{itemize}
			\end{theorem}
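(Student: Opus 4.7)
The plan is to transfer Theorem~\ref{thm:SchauderbasisIntro} from $\mathcal{H}(\Omega)$ to $\mathcal{H}(\mathbb{S}_\C^2)$ along the biholomorphism $S\colon \Omega \to \mathbb{S}_\C^2$ of \eqref{eq:Stereographic}. The pullback $\Phi\colon \mathcal{H}(\Omega) \to \mathcal{H}(\mathbb{S}_\C^2)$, $\Phi(F) := F \circ S^{-1}$, is a linear bijection with inverse $G \mapsto G \circ S$; both sides carry the Fréchet topology of locally uniform convergence, so since $S, S^{-1}$ are holomorphic, $\Phi$ is a topological isomorphism. By construction $\Phi(P_n^{-m}) = Q_n^{-m}$, and the Schauder basis assertion of part~(a) together with the absolute and locally uniform convergence of the displayed expansion is then the direct image of \eqref{eq:PFMasSchauder} under $\Phi$.

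To identify $\mathrm{CSH}^-$ with $\mathcal{H}(\mathbb{S}_\C^2)$, I would recall from Section~\ref{sub:CSH} that $X_{4m(m+1)}(\Omega)$ is spanned by $\{P_n^{-m}\colon |n| \leq m\}$, so $\mathrm{CSH}(m) = \mathrm{span}\{Q_n^{-m}\colon |n| \leq m\}$ and hence $\mathrm{CSH}$ is the algebraic linear hull of all sPFM; the density part of the Schauder statement just established forces $\mathrm{CSH}^- = \mathcal{H}(\mathbb{S}_\C^2)$. For the coefficient formula I would apply \eqref{eq:Schaudercoefficient} to $F = Q \circ S$ and convert $\langle F, P_{\pm n}^{-m}\rangle_\Omega$ into an integral over $\mathbb{S}_\R^2$ via the change of variables $\mathbf{x} = S(z,-\cc{z})$. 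This is the same measure computation that underlies Corollary~\ref{cor:Jacobiorthogonalityspherical}, and the resulting normalization is absorbed into the prefactor $\tfrac{2m+1}{2}\binom{m+n}{n}\binom{m}{n}^{-1}$.

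Part~(b) then follows essentially formally. Sesquilinearity and conjugate symmetry of $\langle\cdot,\cdot\rangle_{\mathbb{S}_\R^2}$ are immediate from \eqref{eq:innerproduct}, and positivity from the non-negative integrand $|Q|^2$; for definiteness, $\langle Q, Q\rangle_{\mathbb{S}_\R^2} = 0$ forces $Q \equiv 0$ on $\mathbb{S}_\R^2$ by continuity, and the identity-principle variant recalled in Remark~\ref{rem:scalarproductIntro}, pulled back via $\Phi$ to a neighbourhood of $\rotdiagonal$ in $\Omega$, upgrades this to $Q \equiv 0$ on $\mathbb{S}_\C^2$. Orthogonality between different spaces $\mathrm{CSH}(m)$, $\mathrm{CSH}(p)$ and between distinct sPFM inside a fixed $\mathrm{CSH}(m)$ is exactly the content of Corollary~\ref{cor:Jacobiorthogonalityspherical}, so $(\mathrm{CSH}(m), \langle\cdot,\cdot\rangle_{\mathbb{S}_\R^2})$ is a $(2m+1)$-dimensional inner product space, automatically Hilbert, with the sPFM as its orthogonal basis.

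The only genuinely non-formal step, and the one I expect to be the easiest place to slip up, is tracking the normalization constant in the measure change that produces the prefactor $\tfrac{1}{2}$ in the $c_{\pm n,m}$ formula; once this is pinned down consistently with the normalization of $d\mathbf{x}$ used in Corollary~\ref{cor:Jacobiorthogonalityspherical}, everything else reduces to transport along the isomorphism $\Phi$ and to results already established.
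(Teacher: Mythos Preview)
Your proposal is correct and follows essentially the same route as the paper: transport Theorem~\ref{thm:SchauderbasisIntro} along the biholomorphism $S$, invoke Corollary~\ref{cor:Jacobiorthogonalityspherical} for the orthogonality statements, and use the identity principle from Remark~\ref{rem:scalarproductIntro} (via $S(\rotdiagonal)=\mathbb{S}_\R^2$) for definiteness of the inner product. You are in fact more explicit than the paper about the density argument yielding $\mathrm{CSH}^-=\mathcal{H}(\mathbb{S}_\C^2)$ and about the measure-change bookkeeping behind the factor $\tfrac12$, which the paper leaves implicit.
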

			\begin{proof}
				Part (a) is a consequence of the PFM being a Schauder basis of $\mathcal{H}(\Omega)$ (Theorem \ref{thm:SchauderbasisIntro}), the sPFM being orthogonal (Corollary \ref{cor:Jacobiorthogonalityspherical}) and $S^{-1}$ being a bijection. Part (b) is the spherical version of Corollary \ref{cor:eigenspaceHilbertspaceIntro}. Since $S(\rotdiagonal)=\mathbb{S}_\R^2$, we can again (see Remark \ref{rem:scalarproductIntro}) argue with the identity principle to see that \eqref{eq:innerproduct} indeed defines an inner product on $\mathcal{H}(\mathbb{S}_\C^2)$. The orthogonality properties follow from Corollary \ref{cor:Jacobiorthogonalityspherical}.
			\end{proof}
			Theorem \ref{prop:SchauderSphere} immediately implies for the space $\mathcal{A}(\widehat{\C})$ from \eqref{eq:starproductC}:
				\begin{corollary}
					Let $f\in C^\infty(\widehat{\C})$. Then $f\in\mathcal{A}(\widehat{\C})$ if and only if $f\circ S^{-1}=Q\vert_{\mathbb{S}_\R^2}$ for some $Q\in\mathcal{H}(\mathbb{S}_\C^2)$. In this case, $Q$ is uniquely determined.
			\end{corollary}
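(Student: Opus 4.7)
The corollary is a direct translation of the definition of $\mathcal{A}(\widehat{\C})$ from the $\Omega$-picture to the $\mathbb{S}_\C^2$-picture via the biholomorphism $S \colon \Omega \to \mathbb{S}_\C^2$ introduced in Section~\ref{sub:complexsphere}. The plan is to use three facts in tandem: (i) $S$ is a biholomorphism, so pullback by $S^{-1}$ gives a bijection $\mathcal{H}(\Omega) \to \mathcal{H}(\mathbb{S}_\C^2)$, $F \mapsto F \circ S^{-1}$; (ii) as remarked after \eqref{eq:inversecomplexstereo}, $S^{-1}$ restricts to a bijection $\mathbb{S}_\R^2 \to \rotdiagonal$ whose composition with the identification $\rotdiagonal \cong \widehat{\C}$ is precisely the standard stereographic projection — so under the notation of the statement, $Q|_{\mathbb{S}_\R^2}$ and $f \circ S^{-1}$ refer to the same function on $\mathbb{S}_\R^2$ whenever $Q = F \circ S^{-1}$ and $f = F|_{\rotdiagonal}$; (iii) the identity-principle argument from Remark~\ref{rem:scalarproductIntro}.

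\textbf{Existence.} For the ``only if'' direction, I would start with $f \in \mathcal{A}(\widehat{\C})$ and unfold the definition \eqref{eq:starproductC}: there exists $F \in \mathcal{H}(\Omega)$ with $f(z) = F(z,-\cc{z})$ for all $z \in \widehat{\C}$. Setting $Q := F \circ S^{-1} \in \mathcal{H}(\mathbb{S}_\C^2)$ and evaluating on $\mathbb{S}_\R^2 \subseteq \mathbb{S}_\C^2$, fact (ii) above yields $Q|_{\mathbb{S}_\R^2} = f \circ S^{-1}$, as required. For the ``if'' direction, given $Q \in \mathcal{H}(\mathbb{S}_\C^2)$ with $Q|_{\mathbb{S}_\R^2} = f \circ S^{-1}$, the function $F := Q \circ S \in \mathcal{H}(\Omega)$ satisfies $F|_{\rotdiagonal} = f$ by the same identification, showing $f \in \mathcal{A}(\widehat{\C})$.

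\textbf{Uniqueness.} Suppose $Q_1, Q_2 \in \mathcal{H}(\mathbb{S}_\C^2)$ both satisfy $Q_j|_{\mathbb{S}_\R^2} = f \circ S^{-1}$. Then $F_j := Q_j \circ S \in \mathcal{H}(\Omega)$ agree on $\rotdiagonal$. The variant of the identity principle recalled in Remark~\ref{rem:scalarproductIntro} (a holomorphic function on a domain in $\C^2$ that vanishes along $\rotdiagonal$ near a point of $\rotdiagonal$ must vanish identically) applied to $F_1 - F_2$ gives $F_1 \equiv F_2$ on $\Omega$, and composing with $S^{-1}$ yields $Q_1 \equiv Q_2$ on $\mathbb{S}_\C^2$.

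There is no real obstacle here; the entire statement is a bookkeeping translation of the definition of $\mathcal{A}(\widehat{\C})$ through the biholomorphism $S$. The only subtlety worth making explicit is how the symbol $f \circ S^{-1}$ is to be parsed — namely through the identification $\rotdiagonal \cong \widehat{\C}$ so that $S^{-1}\colon \mathbb{S}_\R^2 \to \widehat{\C}$ coincides with the classical stereographic projection — which was already pointed out immediately after \eqref{eq:inversecomplexstereo}.
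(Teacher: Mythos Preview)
Your proof is correct. It is also more elementary than what the paper indicates: the paper presents this corollary as an immediate consequence of Theorem~\ref{prop:SchauderSphere}, i.e.\ of the Schauder basis result for $\mathcal{H}(\mathbb{S}_\C^2)$ (which in turn rests on Theorem~\ref{thm:SchauderbasisIntro} and the combinatorial identity of Section~\ref{sec:coefficients}). You bypass all of that machinery and argue directly from the definition \eqref{eq:starproductC} of $\mathcal{A}(\widehat{\C})$, the fact that $S$ is a biholomorphism carrying $\rotdiagonal$ onto $\mathbb{S}_\R^2$, and the identity principle from Remark~\ref{rem:scalarproductIntro}. This is the cleaner route for this particular statement: the corollary is really just a change of coordinates and does not require knowing anything about Schauder bases. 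The paper's phrasing presumably reflects the narrative flow of Section~\ref{sec:CSH} rather than a logical dependence.
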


			\begin{remark}
				Recall \eqref{eq:complexsphericalharmonic}. By a theorem of Cartan (see e.g.\ \cite[Ch. VIII, Sec. A, Th. 18]{Gunning1965}) 
				every function that is holomorphic on $\mathbb{S}_\C^2$ possesses a (not necessarily unique) holomorphic extension to $\C^3$. Thus, there exists $\widetilde{G} \in \mathcal{H}(\C^3)$ such that its restriction to $\mathbb{S}^2_{\C}$ is $\hat{G}|_{\mathbb{S}^2_{\C}}$.
				Now suppose that $F \in \mathcal{H}(\Omega)$ is an eigenfunction of $\Delta_{zw}$. Then $F$ is given by a linear combination of PFM. In view of Lemma \ref{lem:inducedharmonicpolynomial} and \eqref{eq:sphericalharmonics} this is equivalent to $\hat{G}$ being a homogeneous polynomial hence, in particular, $\hat{G} \in \mathcal{H}(\C^3)$ and $\Delta_{\C^3}\hat{G}=0$. This, in turn, is in accordance with Theorem 1 of Wada \cite{Wada1988} which shows that there is a \text{unique} $\widetilde{G} \in \mathcal{H}(\C^3)$ with $\Delta_{\C^3} \widetilde{G}=0$ and $\widetilde{G}|_{\mathbb{S}^2_{\C}}=F \circ S^{-1}=\hat{G}|_{\mathbb{S}^2_{\C}}$. For the proof, Wada also uses a version of complex spherical harmonics on the so-called Lie sphere, which was mainly developed by Morimoto, see e.g.\ \cite{morimoto1980} and \cite{morimoto1983}. However, Morimoto's theory strongly depends on the Lie sphere which does not show up in our theory.
			\end{remark}
			\begin{remark}
				By Theorem \ref{prop:SchauderSphere} the sPFM form an orthogonal basis of the space $\mathrm{CSH}(m)$. In fact, the sPFM are even a special basis because the additional index $n$, which arose from the $\Omega$-homogeneity \eqref{eq:nhomogeniety}, has a concrete counterpart in the real theory: the classical approach solving the real Laplacian eigenvalue equation for the eigenvalue $4m(m+1)$ leads to the \emph{general Legendre equations}. Solving these differential equations yields the Legendre polynomials of degree $m$ and order $n$. Thus, solutions arising from those polynomials in the real theory are the natural counterpart to our sPFM. For more details on this we recommend \cite[Sec.~7.3]{gallier2020} and the references given therein.
			\end{remark}

			\section{Complex zonal harmonics}\label{sub:zonalharmonic}\

			Consider the matrix group $\mathrm{SO}(3,\R)$ of real orthogonal matrices or, in short, rotations of $\mathbb{S}^2_\R$. Note that $\mathrm{SO}(3,\R)$ consists precisely of the isometries of $\R^3$, i.e.\ the linear transformations which preserve the standard inner product
			\begin{equation*}\label{eq:scalarproduct}
				\langle \mathbf{x},\mathbf{y}\rangle:=\sum_{j=1}^3x_jy_j\,,\qquad \mathbf{x},\mathbf{y}\in\R^3\, .
			\end{equation*}
			These matrices arise naturally in the theory of (real) spherical harmonics because the space of (real) spherical harmonics of fixed degree $m\in\N_0$ is invariant under the action of $\mathrm{SO}(3,\R)$. Moreover, there exists a unique (up to a multiplicative factor) function $\mathrm{z}_m\colon\mathbb{S}^2_\R\times\mathbb{S}_\R^2\to\R$ with the properties that $z_m$ is a (real) spherical harmonic of degree $m$ if either of its arguments is fixed, and such that $z_m$ is invariant under rotations applied to both variables, i.e.\
			\begin{equation}\label{eq:realzonalinvariance}
				z_m(R\mathbf{x},R\mathbf{y})=z_m(\mathbf{x},\mathbf{y}),\qquad R\in\mathrm{SO}(3,\R),\, \mathbf{x},\mathbf{y}\in\mathbb{S}_\R^2\, .
			\end{equation}
			Hence, $\mathrm{z}_m$ depends only on the inner product of its arguments. This function $\mathrm{z}_m$ is called \emph{(real) zonal harmonic of degree $m$}. In order to find a counterpart of $z_m$ in our complexified theory, we will first investigate the counterpart of $\mathrm{SO}(3,\R)$: the ``rotations'' of $\mathbb{S}_\C^2$, that is
			\[\text{SO}(3,\C):=\{R \in \C^{3 \times 3} \, : \, R^TR=I,\, \det R=1\}\,.\]
			Note that for our purpose it will be easier to work in the $\Omega$-setting again. By way of the inverse complex stereographic projection, $\text{SO}(3,\C)$ corresponds to the \emph{M\"obius group of $\Omega$} defined by
			\begin{equation}\label{eq:OmegaAutomorphism}
				\mathcal{M}
				:=
				\bigcup \limits_{ \psi \in \Aut(\widehat{\C})}
				\left\{ (z,w) \mapsto
				T_\psi(z,w):=\left(
				\psi(z),\frac{1}{\psi(1/w)}
				\right)\, , \,
				(z,w) \mapsto
				\left(
				\psi(w),\frac{1}{\psi(1/z)}
				\right) \right\}\, .
			\end{equation}
			We say that $T_\psi\in\mathcal{M}$ is induced by $\psi\in\Aut(\widehat{\C})$. The significance of $\mathcal{M}$ for the spectral theory of $\Delta_{zw}$\footnote{Here, we consider $\Delta_{zw}$ as operator on $\mathcal{H}(\Omega)$. For the significance of $\mathcal{M}\cap\Aut(\D\times\D)$ for  $\Delta_{zw}$ as operator on $\mathcal{H}(\D\times\D)$ see \cite{HeinsMouchaRoth3}.} (and thus for the complex spherical harmonics) is that $\mathcal{M}$ consists precisely of the biholomorphic automorphisms of $\Omega$ satisfying
			\begin{equation}
				\Delta_{zw} \left(F \circ T \right)=\left(\Delta_{zw} F\right) \circ T
			\end{equation}
			for all $F\in\mathcal{H}(\Omega)$, see \cite[Th. 5.2]{HeinsMouchaRoth1}. Therefore, every eigenspace $X_{4m(m+1)}(\Omega)$ is invariant w.r.t.\ precompositions with elements of $\mathcal{M}$. In view of \eqref{eq:realzonalinvariance} our goal is to prove the following result.
			\begin{theorem}\label{thm:zonalIntro}
				Fix $(u,v)\in\Omega$. Let $m\in\N_0$. Then there is a unique (up to a multiplicative factor) function in $X_{4m(m+1)}(\Omega)$ that is invariant with respect to all automorphisms in $\mathcal{M}$ that fix $(u,v)$.
			\end{theorem}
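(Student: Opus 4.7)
The plan is to reduce the statement to the special base point $(u,v)=(0,0)$ by exploiting the transitivity of the $\mathcal{M}$-action on $\Omega$, identify the stabilizer of $(0,0)$ explicitly, and then use the $\Omega$-homogeneity \eqref{eq:nhomogeniety} of the PFM together with the fact that $\{P_n^{-m}:|n|\le m\}$ forms a basis of $X_{4m(m+1)}(\Omega)$ to pin down the invariant subspace.

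\textbf{Step 1: Transitivity.} I would first observe that $\mathcal{M}$ acts transitively on $\Omega$. Given $(u,v)\in\Omega$, i.e.\ $uv\neq 1$, one has $u\neq 1/v$ in $\widehat{\C}$, so there exists $\psi\in\Aut(\widehat{\C})$ with $\psi(u)=0$ and $\psi(1/v)=\infty$. The associated $T_\psi\in\mathcal{M}$ then sends $(u,v)$ to $(0,0)$. Writing $T:=T_\psi$, the stabilizer of $(u,v)$ is $T^{-1}\mathcal{M}_{(0,0)} T$, where $\mathcal{M}_{(0,0)}$ denotes the stabilizer of $(0,0)$. Moreover, since precomposition with elements of $\mathcal{M}$ preserves $\Delta_{zw}$ and thus every eigenspace $X_{4m(m+1)}(\Omega)$ (by \cite[Th.~5.2]{HeinsMouchaRoth1}), $F\in X_{4m(m+1)}(\Omega)$ is invariant under the stabilizer of $(u,v)$ if and only if $F\circ T\in X_{4m(m+1)}(\Omega)$ is invariant under $\mathcal{M}_{(0,0)}$. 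Hence it suffices to establish uniqueness at $(0,0)$.

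\textbf{Step 2: The stabilizer of $(0,0)$.} A direct calculation with \eqref{eq:OmegaAutomorphism} shows that $T_\psi(0,0)=(0,0)$ forces $\psi(0)=0$ and $\psi(\infty)=\infty$, so $\psi(z)=\xi z$ for some $\xi\in\C^*$. Consequently $\mathcal{M}_{(0,0)}$ consists precisely of the maps
\[
(z,w)\longmapsto \bigl(\xi z,\, w/\xi\bigr)\qquad\text{and}\qquad (z,w)\longmapsto \bigl(\xi w,\, z/\xi\bigr),\qquad \xi\in\C^*.
\]
In particular, $\mathcal{M}_{(0,0)}$ contains the one-parameter family $R_\xi:(z,w)\mapsto(\xi z, w/\xi)$ with $\xi\in\partial\D$.

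\textbf{Step 3: Invariant eigenfunctions at $(0,0)$.} Any $G\in X_{4m(m+1)}(\Omega)$ can be written uniquely as
\[
G=\sum_{n=-m}^{m} c_n\, P_n^{-m},
\]
since the PFM $\{P_n^{-m}:|n|\le m\}$ form a basis of $X_{4m(m+1)}(\Omega)$ by \cite[Th.~8.1]{HeinsMouchaRoth3}. The $\Omega$-homogeneity \eqref{eq:nhomogeniety} gives $G\circ R_\xi=\sum_n \xi^{-n}c_n P_n^{-m}$, so $G\circ R_\xi=G$ for every $\xi\in\partial\D$ forces $c_n=0$ whenever $n\neq 0$. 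Thus the $R_\xi$-invariant subspace of $X_{4m(m+1)}(\Omega)$ is already one-dimensional, spanned by $P_0^{-m}$. Finally, the symmetry $P_n^{-m}(z,w)=P_{-n}^{-m}(w,z)$ (Remark \ref{rem:PFMIntro}(c)) together with $0$-homogeneity of $P_0^{-m}$ shows that $P_0^{-m}$ is indeed also invariant under the swap-type generators $(z,w)\mapsto(\xi w,z/\xi)$, so $P_0^{-m}$ spans the full $\mathcal{M}_{(0,0)}$-invariant subspace.

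\textbf{Step 4: Conclusion.} Pulling back via $T$ from Step~1, the $\mathcal{M}$-stabilizer-invariant subspace of $X_{4m(m+1)}(\Omega)$ at $(u,v)$ is one-dimensional, spanned by $P_0^{-m}\circ T$. The main technical point to be careful about is Step~2: one must verify that no additional elements of $\mathcal{M}$ beyond the two families listed actually fix $(0,0)$, and in particular that the one-parameter family $\{R_\xi:\xi\in\partial\D\}$ is genuinely contained in $\mathcal{M}_{(0,0)}$ so that the Fourier-type argument in Step~3 cuts the invariant space down to a single dimension. Everything else reduces to bookkeeping using the Schauder basis and the $\mathcal{M}$-invariance of $\Delta_{zw}$.
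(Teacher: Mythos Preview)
Your proof is correct and follows essentially the same strategy as the paper's proof (given as part (b) of Theorem~\ref{thm:zonalinvariance}): reduce to the base point $(0,0)$, identify the stabilizer there as the rotations $\varrho_\gamma$ together with $\varrho_\gamma\circ\mathcal{S}$, and then use the $\Omega$-homogeneity of the PFM basis of $X_{4m(m+1)}(\Omega)$ to force the invariant function to be a multiple of $P_0^{-m}$. The only cosmetic differences are that the paper organizes the reduction via the explicit case analysis of Lemma~\ref{lem:TzwProperties} (treating $(u,v)\in\C^*\times\C^*$, $(u,v)=(0,0)$, $(u,v)=(\infty,\infty)$, etc.\ separately) rather than invoking transitivity once, and that it cites \cite[Th.~4.3]{HeinsMouchaRoth3} for the uniqueness at $(0,0)$ instead of spelling out the Fourier-coefficient argument you give in Step~3.
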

			In Section \ref{sub:zonal} we develop our complexified version of zonal harmonics and, among others, prove Theorem \ref{thm:zonalIntro}. To this end, we collect some basic properties about $\mathcal{M}$ in the next Section~\ref{sec:pullbackformula}. For a first read, since the results in Section \ref{sec:pullbackformula} are rather technical,  we recommand to turn towards Section \ref{sub:zonal} next, and consult Section \ref{sec:pullbackformula} whenever the need arises. Further, for the theory of real zonal harmonics we refer to \cite[pp.~94--97]{axler2001}, \cite[Th.~3.5.9]{simon15} and \cite[Sec.~7.6]{gallier2020}.

			\subsection{Properties of the M\"obius group}\label{sec:pullbackformula}\

			\medskip

			We can characterize $\mathcal{M}$ as follows:
			\begin{lemma}[Lemma 2.2 in \cite{HeinsMouchaRoth2}]
				\label{lem:MoebiusGenerators}
				The group $\mathcal{M}$ is generated by the flip map $\mathcal{F}:(u,v)\mapsto(1/v,1/u)$ and the mappings
				\begin{equation}
					\label{eq:MoebiusGenerators}
					T_{z,w}(u,v)
					\coloneqq
					\left(
					\frac{z-u}{1-wu} ,
					\frac{w-v}{1-zv}
					\right)
					\quad \textrm{and} \quad
					\varrho_{\gamma}(u,v)
					\coloneqq
					\left(
					\gamma u, \frac{1}{\gamma }v
					\right)
					\, , \qquad
					(u,v) \in \Omega
				\end{equation}
				with $(z,w) \in\Omega\cap (\C\times\C)$ and $\gamma \in \C^*$. More precisely, for every $T \in \mathcal{M}$ there exist $z,w \in \C$ and $\gamma \in \C^*$ such that
				\begin{equation*}
					\label{eq:MoebiusGeneratorsExplicit}
					T
					=
					\varrho_\gamma
					\circ
					T_{z,w}\quad\text{or}\quad T
					=
					\varrho_\gamma
					\circ
					T_{z,w}
					\circ
					\mathcal{F}
					\, .
				\end{equation*}
			\end{lemma}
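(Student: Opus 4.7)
The strategy is to exploit the dichotomy built into the definition of $\mathcal{M}$: every element is either a ``Type~1'' map $T_\psi(u,v) = (\psi(u), 1/\psi(1/v))$ preserving the two factors of $\widehat{\C}^2$, or a ``Type~2'' map $\widetilde T_\psi(u,v) = (\psi(v), 1/\psi(1/u))$ swapping them, for some $\psi \in \Aut(\widehat{\C})$. A direct composition check shows that Type~1 is closed under composition and that composing with a Type~2 element switches the type; hence $\mathcal{M}$ is a group, the Type~1 elements form an index-$2$ subgroup, and the assignment $\psi \mapsto T_\psi$ is a group homomorphism from $\Aut(\widehat{\C})$ onto that subgroup. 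The problem thus reduces to (i) finding a convenient generating set of $\Aut(\widehat{\C})$ and (ii) reducing Type~2 elements to Type~1 elements by a single ``flip''.

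For step~(ii) I would first verify from the definitions that the listed maps all lie in $\mathcal{M}$: $T_{z,w}$ is the Type~1 map for the Möbius involution $\psi_{z,w}(u) \coloneqq (z-u)/(1-wu)$, $\varrho_\gamma$ is the Type~1 map for $\lambda_\gamma(u) \coloneqq \gamma u$, and $\mathcal{F}$ is the Type~2 map for the inversion $\iota(u) \coloneqq 1/u$. A short calculation then yields the identity $\widetilde T_\phi = T_{\phi\circ\iota}\circ \mathcal{F}$, so every Type~2 element is obtained from a Type~1 element by post-composition with $\mathcal{F}$. Consequently, it suffices to show that every Type~1 element has the form $\varrho_\gamma \circ T_{z,w}$, which, via the homomorphism of step~(i), is equivalent to decomposing an arbitrary $\psi \in \Aut(\widehat{\C})$ as $\lambda_\gamma \circ \psi_{z,w}$.

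Writing $\psi(u) = (\alpha u + \beta)/(cu+d)$, the generic case is straightforward: when $\alpha \neq 0$ and $d \neq 0$, one normalizes $d = 1$ and reads off $\gamma = -\alpha$, $z = -\beta/\alpha$, $w = -c$, with $(z,w) \in \Omega$ forced by $\alpha d - \beta c \neq 0$. The main obstacle is the degenerate case when $\alpha d = 0$, for which this substitution fails directly; such a $\psi$ must first be rewritten by inserting an additional inversion $\iota$, and one then uses $\mathcal{F}^2 = \mathrm{id}$ together with the algebraic relations between $T_{z,w}$, $\varrho_\gamma$ and $\mathcal{F}$ to absorb the extra $\iota$ and bring the result into one of the two stated branches. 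Carefully running through the subcases while keeping track of which branch the normal form lands in, and verifying that the constraints $(z,w) \in \Omega \cap (\C\times\C)$ and $\gamma \in \C^*$ are preserved throughout, is where the main bookkeeping effort lies.
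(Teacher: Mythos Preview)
The paper does not give its own proof of this lemma; it is quoted verbatim from \cite{HeinsMouchaRoth2}, so there is no in-paper argument to compare against. Your overall architecture---splitting $\mathcal{M}$ into the two cosets of the ``diagonal'' subgroup $\{T_\psi:\psi\in\Aut(\widehat{\C})\}$, using $\widetilde T_\phi=T_{\phi\circ\iota}\circ\mathcal{F}$ to reduce Type~2 to Type~1, and then factoring a generic $\psi\in\Aut(\widehat{\C})$ as $\lambda_\gamma\circ\psi_{z,w}$---is exactly right and proves the \emph{generation} assertion in the first sentence of the lemma.

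There is, however, a genuine obstruction in the degenerate step, and it is not merely bookkeeping. The Type~1 maps of the form $\varrho_\gamma\circ T_{z,w}$ correspond precisely to those $\psi$ with $\psi(0)\neq\infty$ and $\psi(\infty)\neq 0$ (equivalently $a\neq 0$ and $d\neq 0$ in your matrix representation). The map $T_\iota:(u,v)\mapsto(1/u,1/v)$ is Type~1 with $\iota(0)=\infty$, so it is \emph{not} of the form $\varrho_\gamma\circ T_{z,w}$; and since $\varrho_\gamma\circ T_{z,w}\circ\mathcal{F}$ is always Type~2 while Type~1 and Type~2 are disjoint, $T_\iota$ is not of that form either. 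Thus the ``more precisely'' clause, read literally, fails for $T_\iota$, and no amount of inserting inversions and absorbing via $\mathcal{F}^2=\id$ will put it into one of the two stated normal forms. What \emph{is} true is that $\iota$ lies in the subgroup generated by the $\lambda_\gamma$ and $\psi_{z,w}$ (e.g.\ conjugate $\psi_{0,1}$ by a translation), so the generation statement survives. Your write-up should therefore either flag that the explicit two-term normal form requires the generic hypothesis $\psi(0)\neq\infty$, $\psi(\infty)\neq 0$, or replace the degenerate case by an explicit longer word in the generators rather than trying to force it into the two displayed forms.
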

			\begin{remark}
				For $w=\overline{z}\in\D$ in \eqref{eq:MoebiusGenerators} we see that $T_{z,w}$ is induced by a self-inverse automorphisms of $\D$ (not being the identity). Similarly, choosing {$w=-\overline{z}\in\C$}, the automorphism $T_{z,w}$ is induced by a self-inverse rigid motions of $\widehat{\C}$.
			\end{remark}
			Note that we replaced the automorphisms $\Phi_{z,w}$ occurring in \cite{HeinsMouchaRoth2} by the automorphisms ${T_{z,w}=\Phi_{z,w}\circ\varrho_{-1}}$ in \eqref{eq:MoebiusGenerators}. For our purposes this is more convenient because, essentially, the automorphisms $T_{z,w}$ in \eqref{eq:MoebiusGenerators} are precisely the automorphisms of $\Omega$ interchanging a given point $(z,w)\in\Omega\cap(\C\times\C)$ with $(0,0)$ (instead of only sending $(0,0)$ to $(z,w)$). This is the content of the following lemma which is easily verified by calculation. Here, for $z,w\in\widehat{\C}^*$ we define
			\begin{equation}\label{eq:Tzwtilde}
				\widetilde{T}_{z,w}(u,v)=\left(T_{\mathcal{F}(z,w)}\circ\mathcal{F}\right)(u,v)=T_{1/w,1/z}\left(\frac{1}{v},\frac{1}{u}\right),\qquad (u,v)\in\Omega\,.
			\end{equation}
			We will also need the \emph{swap} on $\Omega$, that is the automorphism
			\begin{equation}\label{eq:swap}
				\mathcal{S}(u,v)=(v,u),\qquad (u,v)\in\Omega\,.
			\end{equation}

			\begin{lemma}\label{lem:TzwProperties}
				Let $(z,w) \in \Omega$ and $T \in \mathcal{M}$ such that $T(z,w)=(0,0)$ and $T(0,0)=(z,w)$.
				\begin{itemize}
					\item[(a)] Let $(z,w) \in \C^*\times\C^*$. Then $T=T_{z,w}$ or $T=\widetilde{T}_{z,w}$.
					\item[(b)] Let $(z,w)\in\C\times\C$ such that either $z=0$ or $w=0$. Then $T=T_{z,w}$.
					\item[(c)] Let $(z,w)\in\widehat{\C}^*\times\widehat{\C}^*$ such that either $z=\infty$ or $w=\infty$. Then $T=\widetilde{T}_{z,w}$.
					\item[(d)] If $(z,w)=(0,0)$, then there is $\gamma \in \C^*$ such that either $T=\varrho_\gamma$ or $T=\varrho_\gamma\circ\mathcal{S}$.
					\item[(e)]  If $(z,w)=(\infty,\infty)$, then there is $\gamma \in \C^*$ such that either $T=\varrho_\gamma\circ\mathcal{F}$ or $T=\varrho_\gamma\circ\mathcal{F}\circ\mathcal{S}$.
				\end{itemize}
			\end{lemma}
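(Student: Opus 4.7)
My plan is to apply Lemma~\ref{lem:MoebiusGenerators} to write $T$ in one of the two canonical forms
\[
T \;=\; \varrho_\gamma \circ T_{a,b} \quad\text{or}\quad T \;=\; \varrho_\gamma \circ T_{a,b} \circ \mathcal{F},
\]
with $\gamma \in \C^*$ and $(a,b)\in\C\times\C$, $ab\neq 1$, and then to impose the two constraints $T(0,0)=(z,w)$ and $T(z,w)=(0,0)$ and solve for the parameters. The base computation is
\[
(\varrho_\gamma\circ T_{a,b})(0,0) = (\gamma a,\, b/\gamma),\qquad
(\varrho_\gamma\circ T_{a,b}\circ\mathcal{F})(0,0) = \varrho_\gamma\bigl(T_{a,b}(\infty,\infty)\bigr) = (\gamma/b,\, 1/(\gamma a)),
\]
and the analogous expressions for $T(z,w)$ follow from $T_{a,b}(z,w)=((a-z)/(1-bz),(b-w)/(1-aw))$ together with $\mathcal{F}(z,w)=(1/w,1/z)$.

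In case~(a), both forms produce consistent finite systems that force $\gamma=1$, together with $(a,b)=(z,w)$ in the first form, hence $T=T_{z,w}$, and $(a,b)=(1/w,1/z)$ in the second, hence $T=\widetilde{T}_{z,w}$ by~\eqref{eq:Tzwtilde}. In cases (b) and~(c) exactly one canonical form is ruled out by the requirement $a,b\in\C$: e.g.\ if (WLOG) $w=0$ and $z\in\C^*$ then the second form demands $1/(\gamma a)=0$, which admits no finite $a$, so only the first survives and delivers $T=T_{z,w}$; case~(c) is symmetric via~$\mathcal{F}$ and yields $T=\widetilde{T}_{z,w}$.

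For parts~(d) and~(e), the first form with $T(0,0)=(0,0)$ forces $a=b=0$, and since $T_{0,0}=\varrho_{-1}$ this delivers $T = \varrho_{-\gamma}$, which after relabeling reads $T=\varrho_\gamma$ with $\gamma\in\C^*$ free; in~(e) only the second form survives and yields $T=\varrho_\gamma\circ\mathcal{F}$. The additional branches $T=\varrho_\gamma\circ\mathcal{S}$ in~(d) and $T=\varrho_\gamma\circ\mathcal{F}\circ\mathcal{S}$ in~(e) are not transparently captured by the canonical forms of Lemma~\ref{lem:MoebiusGenerators} and constitute the main obstacle. To reach them I would return to definition~\eqref{eq:OmegaAutomorphism}: every $T\in\mathcal{M}$ also admits the shape $(z,w)\mapsto(\psi(w),1/\psi(1/z))$ for some $\psi\in\Aut(\widehat{\C})$, and the fixed-point conditions then translate to $\psi(0)=0$, $\psi(\infty)=\infty$ for~(d), respectively $\psi(0)=\infty$, $\psi(\infty)=0$ for~(e). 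These pin down $\psi(x)=\gamma x$ (resp.\ $\psi(x)=\gamma/x$), and substituting back into the second shape of~\eqref{eq:OmegaAutomorphism} recovers exactly the missing branches $\varrho_\gamma\circ\mathcal{S}$ and $\varrho_\gamma\circ\mathcal{F}\circ\mathcal{S}$, completing the enumeration.
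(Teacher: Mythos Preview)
Your approach is essentially the ``direct calculation'' the paper has in mind, and for parts~(a)--(c) your parameter-matching is correct. There is, however, a consistency issue you should tighten. You yourself observe that the two canonical forms from Lemma~\ref{lem:MoebiusGenerators} fail to produce the swap branches in~(d) and~(e); concretely, $\mathcal{S}$ is the second shape in~\eqref{eq:OmegaAutomorphism} with $\psi=\id$, yet neither $\varrho_\gamma\circ T_{a,b}$ nor $\varrho_\gamma\circ T_{a,b}\circ\mathcal{F}$ realises it for any finite $a,b$. Having noticed this, you cannot then rely on those same canonical forms as an \emph{exhaustive} enumeration in~(a)--(c) without further comment: if some $T\in\mathcal{M}$ escapes the two forms, your case analysis there is a priori incomplete.

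The fix is cheap and in fact already implicit in your treatment of~(d) and~(e): work from~\eqref{eq:OmegaAutomorphism} throughout. Every $T\in\mathcal{M}$ is either $T_\psi(u,v)=(\psi(u),1/\psi(1/v))$ or $T_\psi\circ\mathcal{S}$ for a unique $\psi\in\Aut(\widehat{\C})$, and these two families are disjoint. The constraints $T(0,0)=(z,w)$ and $T(z,w)=(0,0)$ translate into the values of $\psi$ at four points of~$\widehat{\C}$; a M\"obius transformation is determined by three of them, and the fourth is then a consistency check. Carrying this out for each of (a)--(e) reproduces exactly your conclusions, now with no appeal to the ``more precisely'' part of Lemma~\ref{lem:MoebiusGenerators}. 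Alternatively, if you prefer to keep your current structure, add one line in~(a)--(c) noting that any $T$ not covered by the two canonical forms must have $\psi(0)=\infty$ (first shape) or $\psi(\infty)=\infty$ (second shape), and that either forces a coordinate of $T(0,0)$ to be $0$ or~$\infty$, which the hypotheses on $(z,w)$ exclude.
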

			The automorphisms $T_{z,w}$ and $\widetilde{T}_{z,w}$ do not form a subgroup of the M\"obius group $\mathcal{M}$ because composing two automorphisms of this type produces an additional rotation $\varrho_\gamma$. The same is true when composing $T_{z,w}$ with an arbitrary automorphism $T\in\mathcal{M}$. We will give a conceptual interpretation of the following result in Remark \ref{rem:scalarproductforM}.

				\begin{lemma}\label{lem:invarianceonOmega}
					Let $(z,w),(u,v)\in\Omega$. Let $T\in\mathcal{M}$ with $T(z,w)=(z',w')$ and $T(u,v)=(u',v')$. Then there is $\gamma_j\in\C^*$ and $\tau_j\in\{0,1\}$, $j=a,b,c$, such that
					\begin{subequations}
						\begin{align}\label{eq:invarianceonOmega}
							\varrho_{\gamma_a}\circ\mathcal{S}^{\tau_a}\circ T_{u,v}(z,w)&=T_{u',v'}(z',w')\,,\qquad\text{if }u,v,u',v'\in\C\,,\\
							\varrho_{\gamma_b}\circ\mathcal{S}^{\tau_b}\circ \widetilde{T}_{u,v}(z,w)&=\widetilde{T}_{u',v'}(z',w')\,,\qquad\text{if }u,v,u',v'\in\widehat{\C}^*\,,\label{eq:invarianceonOmega2}\\
							\varrho_{\gamma_c}\circ\mathcal{S}^{\tau_c}\circ T_{u,v}\label{eq:invarianceonOmega3}(z,w)&=\widetilde{T}_{u',v'}(z',w')\,,\qquad\text{if }u,v\in\C,\,u',v'\in\widehat{\C}^*\,.
						\end{align}
					\end{subequations}
					Here, we write $\mathcal{S}^0(z,w)=(z,w)$ and $\mathcal{S}^1(z,w)=(w,z)$ for $(z,w)\in\Omega$.
				\end{lemma}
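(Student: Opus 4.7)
The key structural observation driving the plan is that, by Lemma~\ref{lem:TzwProperties}(d), the stabilizer of $(0,0) \in \Omega$ inside the M\"obius group $\mathcal{M}$ is precisely $\{\varrho_\gamma : \gamma \in \C^*\} \cup \{\varrho_\gamma \circ \mathcal{S} : \gamma \in \C^*\}$. My strategy is, in each of the three cases, to build a composition of $T$ with appropriate factors of type $T_{*,*}$ or $\widetilde{T}_{*,*}$ that fixes $(0,0)$, and then invoke this characterisation.

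First, I would record two preliminary facts about the maps in \eqref{eq:MoebiusGenerators} and \eqref{eq:Tzwtilde}: both $T_{u,v}$ (for $u,v \in \C$) and $\widetilde{T}_{u,v}$ (for $u,v \in \widehat{\C}^*$) are involutions of $\Omega$ that interchange $(0,0)$ with $(u,v)$. For $T_{u,v}$ this is a short calculation from \eqref{eq:MoebiusGenerators}. For $\widetilde{T}_{u,v} = T_{1/v,1/u} \circ \mathcal{F}$ the swapping of $(0,0)$ and $(u,v)$ follows from $\mathcal{F}(0,0) = (\infty,\infty)$, $T_{1/v,1/u}(\infty,\infty) = (u,v)$ (read off as the M\"obius extension to $\widehat{\C}$), and $T_{1/v,1/u}(1/v,1/u) = (0,0)$; the involution property is then a direct, if slightly tedious, verification.

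With these in hand, in each of the cases (a), (b), (c) I would consider the compositions
\[
S_a := T_{u',v'} \circ T \circ T_{u,v}, \qquad S_b := \widetilde{T}_{u',v'} \circ T \circ \widetilde{T}_{u,v}, \qquad S_c := \widetilde{T}_{u',v'} \circ T \circ T_{u,v}.
\]
Each $S_j$ lies in $\mathcal{M}$, and direct evaluation at $(0,0)$ shows $S_j(0,0) = (0,0)$: for instance in case (a), $S_a(0,0) = T_{u',v'}(T(T_{u,v}(0,0))) = T_{u',v'}(T(u,v)) = T_{u',v'}(u',v') = (0,0)$, and cases (b), (c) are identical modulo replacing $T_{u,v}$ or $T_{u',v'}$ by their tilde variants. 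By Lemma~\ref{lem:TzwProperties}(d) applied to $S_j$, there exist $\gamma_j \in \C^*$ and $\tau_j \in \{0,1\}$ with $S_j = \varrho_{\gamma_j} \circ \mathcal{S}^{\tau_j}$. Using that $T_{u,v}$ and $\widetilde{T}_{u,v}$ are involutions to cancel the rightmost factor, and then evaluating both sides at $(z,w)$, yields \eqref{eq:invarianceonOmega}, \eqref{eq:invarianceonOmega2} and \eqref{eq:invarianceonOmega3} respectively.

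The argument is essentially group-theoretic bookkeeping, so no single step is technically deep. The only place requiring genuine care is ensuring that the compositions remain well defined on all of $\Omega$ when some of $u, v, u', v'$ equal $\infty$ (which is only relevant in cases (b) and (c)); I would handle this uniformly by treating $T_{a,b}$ and $\widetilde{T}_{a,b}$ as elements of $\mathcal{M}$, i.e.\ as biholomorphisms of $\Omega$, rather than by inspecting their coordinate formulas at $\infty$ in each subcase.
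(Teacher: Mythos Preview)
Your argument is correct, and it takes a genuinely different route from the paper's proof. The paper works explicitly: it decomposes $T=\varrho_\kappa\circ T_{\alpha,\beta}\circ\mathcal{F}^\sigma$ via Lemma~\ref{lem:MoebiusGenerators} and then verifies by direct computation the three component identities
\[
T_{\varrho_\kappa(u,v)}\circ\varrho_\kappa=\varrho_\kappa\circ T_{u,v},\quad
T_{T_{\alpha,\beta}(u,v)}\circ T_{\alpha,\beta}=\varrho_{\frac{\alpha v-1}{1-\beta u}}\circ T_{u,v},\quad
T_{T_{\alpha,\beta}\circ\mathcal{F}(u,v)}\circ T_{\alpha,\beta}\circ\mathcal{F}=\varrho_{\frac{\alpha-u}{v-\beta}}\circ\mathcal{S}\circ T_{u,v},
\]
which it then concatenates. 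Your proof bypasses all of this by observing that the composite $S_j$ lies in the stabilizer of $(0,0)$ and invoking Lemma~\ref{lem:TzwProperties}(d) once; the involution property of $T_{u,v}$ and $\widetilde{T}_{u,v}$ (which indeed holds, and for $\widetilde{T}_{u,v}$ reduces to the pleasant fact $\mathcal{F}\circ T_{a,b}\circ\mathcal{F}=T_{a,b}$) then finishes things cleanly. Your approach is shorter and more conceptual; what the paper's computation buys in return is the explicit closed form of the constants $\gamma_j$ in terms of $\kappa,\alpha,\beta,u,v$, which it records in the Remark immediately following the lemma. If one cares only about existence of $\gamma_j$ and $\tau_j$, your argument is preferable.
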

				\begin{proof}
					We use Lemma \ref{lem:MoebiusGenerators} to write $T=\varrho_\kappa\circ T_{\alpha,\beta}\circ\mathcal{F}^\sigma$ where $\kappa\in\C^*$, $\alpha,\beta\in\C$ and $\sigma\in\{0,1\}$. For the individual components it holds that
					\begin{alignat*}{3}
						\left(T_{\varrho_\kappa(u,v)}\circ\varrho_\kappa\right)(z,w)&=\left(\varrho_\kappa\circ T_{u,v}\right)(z,w)\,,&&\qquad (z,w)\in\Omega\\
						\left(T_{T_{\alpha,\beta}(u,v)}\circ T_{\alpha,\beta}\right)(z,w)&=\left(\varrho_{\frac{\alpha v-1}{1-\beta u}}\circ T_{u,v}\right)(z,w)\,,&&\qquad (z,w)\in\Omega\,,\; u\neq1/\beta,\, v\neq1/\alpha\\
						\left(T_{T_{\alpha,\beta}\circ\mathcal{F}(u,v)}\circ T_{\alpha,\beta}\circ\mathcal{F}\right)(z,w)&=\left(\varrho_{\frac{\alpha -u}{v-\beta }}\circ\mathcal{S}\circ T_{u,v}\right)(z,w)\,,&&\qquad (z,w)\in\Omega\,,\; u\neq\alpha,\; v\neq\beta\, .
					\end{alignat*}
					Moreover, note that $\varrho_{\kappa}\circ\varrho_{\kappa'}=\varrho_{\kappa\kappa'}$. First assume $\sigma=0$. Further, assume $u\neq1/\beta$, {$v\neq1/\alpha$} since then $(u',v')=T(u,v)\in\Omega\cap(\C\times\C)$. Using the above identities it follows that
					\begin{equation*}
						\left(T_{T(u,v)}\circ T\right)(z,w)=\varrho_{\kappa}\left(\left(T_{T_{\alpha,\beta}(u,v)}\circ T_{\alpha,\beta}\right)(z,w)\right)=\left(\varrho_{\kappa\frac{\alpha v-1}{1-\beta u}}\circ T_{u,v}\right)(z,w)\,
					\end{equation*}
					which is \eqref{eq:invarianceonOmega} for $\tau_a=0$ and $\gamma_a=\kappa(\alpha v-1)/(1-\beta u)$. Now assume $\sigma=1$ and $u\neq\alpha$, $v\neq\beta$ which assures $(u',v')=T(u,v)\in\Omega\cap(\C\times\C)$. Then it follows that
					\begin{equation*}
						\left(T_{T(u,v)}\circ T\right)(z,w)=\varrho_{\kappa}\left(\left(T_{T_{\alpha,\beta}\circ\mathcal{F}(u,v)}\circ T_{\alpha,\beta}\circ\mathcal{F}\right)(z,w)\right)=\left(\varrho_{\kappa\frac{\alpha -u}{v-\beta}}\circ\mathcal{S}\circ T_{u,v}\right)(z,w)
					\end{equation*}
					which is \eqref{eq:invarianceonOmega} for $\tau_a=1$ and $\gamma_a=\kappa(\alpha -u)/(v-\beta)$. The other cases are proven similarly.
				\end{proof}
				\begin{remark}
					The previous proof contains explicit formulas for the scalars $\gamma_i$, $i=a,b,c$. The appearance of the swap $\mathcal{S}$ in the formulas \eqref{eq:invarianceonOmega} and \eqref{eq:invarianceonOmega2} indicates that $\sigma=1$ in the expression of $T$ in the proof of Lemma \ref{lem:invarianceonOmega}. On the contrary, if $\mathcal{S}$ appears in \eqref{eq:invarianceonOmega3}, then $\sigma=0$. Moreover note that for $T=T_{\alpha,\beta}$, with $\alpha,\beta,u,v\in\C$, $u\neq1/\beta$, $v\neq1/\alpha$, identity \eqref{eq:invarianceonOmega} results in
					\begin{equation*}
						\left(T_{T_{\alpha,\beta}(u,v)}\circ T_{\alpha,\beta}\right)(z,w)=\left(\varrho_{\gamma}\circ T_{u,v}\right)(z,w)\quad \text{where}\quad \gamma=\sqrt{\psi_{\alpha,\beta}'(u)/(1/\psi_{\alpha,\beta}(1/v))'}
					\end{equation*}
					which was already noted in \cite[Prop.~3.6]{HeinsMouchaRoth2}.
				\end{remark}
				Recall that we want to develop ``complexified zonal harmonics'' in the following Section \ref{sub:zonal}. Therefore, we need to find a counterpart to \eqref{eq:realzonalinvariance} in the $\Omega$-setting. It is clear that we replace $R\in\mathrm{SO}(3,\R)$ with $T\in\mathcal{M}$. As explained at the beginning of Section \ref{sub:zonalharmonic}, in the classical theory one is interested in the orthogonal matrices which are precisely the linear transformations preserving the standard inner product $\langle\cdot{,}\cdot\rangle$ on $\R$. In fact, $\mathrm{SO}(3,\C)$ also preserves $\langle\cdot{,}\cdot\rangle$. Thus, we are interested in the analogous quantity to $\langle\cdot{,}\cdot\rangle$ on $\Omega$ which turns out to be the \emph{cross ratio}
				\begin{equation}\label{eq:crossratio}
					[u_1,u_2,u_3,u_4]:=\frac{(u_1-u_3)(u_2-u_4)}{(u_1-u_4)(u_2-u_3)}\,,\qquad u_1,u_2,u_3,u_4\in\widehat{\C}\,.
				\end{equation}
				(If either of the points $u_j$, $j=1,2,3,4$, equals $\infty$, then we understand \eqref{eq:crossratio} as the appropriate limit.)  \begin{lemma}\label{lem:scalarproductforautomorphisms}
					Let $(z,w),(u,v),(z',w'),(u',v')\in\Omega$. Then there is $T\in\mathcal{M}$ such that $T(z,w)=(z',w')$ and ${T(u,v)=(u',v')}$ if and only if
					\begin{equation*}
						\left[z,\frac{1}{w},u,\frac{1}{v}\right]=\left[z',\frac{1}{w'},u',\frac{1}{v'}\right]\,.
					\end{equation*}
				\end{lemma}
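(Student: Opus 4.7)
The plan is to transport the question from $\Omega$ to $\widehat{\C}\times\widehat{\C}$ via the identification $(z,w)\leftrightarrow(z,1/w)$. On the latter, elements of $\mathcal{M}$ act by coordinatewise Möbius transformations, possibly preceded by a swap of the two coordinates, so the result will then follow from invariance of the cross ratio together with the 3-transitivity of $\Aut(\widehat{\C})$.

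For the forward direction, I would recall from \eqref{eq:OmegaAutomorphism} that every $T\in\mathcal{M}$ has one of the two shapes $T_\psi(z,w)=(\psi(z),1/\psi(1/w))$ or $(z,w)\mapsto(\psi(w),1/\psi(1/z))$ for some $\psi\in\Aut(\widehat{\C})$. Under the identification, the first type sends $(z,1/w)\mapsto(\psi(z),\psi(1/w))$ and the second sends $(z,1/w)\mapsto(\psi(w),\psi(1/z))$, that is a coordinatewise Möbius action followed by a swap of the entries. Both actions preserve $[z,1/w,u,1/v]$: the first by classical Möbius invariance, and the second by combining Möbius invariance with the two-line identity $[w,1/z,v,1/u]=[z,1/w,u,1/v]$, itself an instance of the symmetry $[a,b,c,d]=[b,a,d,c]$.

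For the converse, I would first treat the generic case in which $z,1/w,u$ are pairwise distinct in $\widehat{\C}$. Then 3-transitivity of $\Aut(\widehat{\C})$ yields a unique $\psi\in\Aut(\widehat{\C})$ with $\psi(z)=z'$, $\psi(1/w)=1/w'$, $\psi(u)=u'$; applying $\psi$ to $[z,1/w,u,1/v]$ gives $[z',1/w',u',\psi(1/v)]$, and comparing with the assumed value $[z',1/w',u',1/v']$ and the injectivity of $t\mapsto[a,b,c,t]$ for distinct $a,b,c$ forces $\psi(1/v)=1/v'$, so that $T:=T_\psi\in\mathcal{M}$ does the job. The main obstacle is the degenerate case in which $z,1/w,u$ fail to be pairwise distinct: since $(z,w),(u,v)\in\Omega$ exclude $z=1/w$ and $u=1/v$, the remaining possibilities are $z=u$, $w=v$, $z=1/v$, and $1/w=u$; in each of these the cross ratio takes the value $0$ or $\infty$, which via the hypothesis forces the matching coincidence on the primed side. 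One then either picks a distinct subtriple among $\{z,1/w,u,1/v\}$ to fix $\psi$ by 3-transitivity, or, in the exceptional configuration $(u,v)=\mathcal{F}(z,w)$ (which corresponds to $z=1/v$ \emph{and} $1/w=u$ simultaneously, leaving only two distinct points), reduces to the generic case after precomposing with the flip $\mathcal{F}\in\mathcal{M}$. I expect this bookkeeping of degeneracies to be the only technical point; everything else is a direct consequence of 3-transitivity and cross ratio invariance.
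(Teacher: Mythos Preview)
Your proposal is correct and follows essentially the same approach as the paper: both rest on the classical fact that four points of $\widehat{\C}$ are carried to four other points by some $\psi\in\Aut(\widehat{\C})$ if and only if the two cross ratios coincide. The paper's proof is a one-line citation of this fact from Ahlfors, while you spell out the mechanism (the identification $(z,w)\leftrightarrow(z,1/w)$, the diagonal M\"obius action, and the extra symmetry $[a,b,c,d]=[b,a,d,c]$ needed for the second type of element in $\mathcal{M}$). You are also more careful than the paper about the degenerate configurations where fewer than three of $z,1/w,u,1/v$ are distinct; the paper does not discuss these at all. One small point: in your generic case you implicitly use that $z',1/w',u'$ are also pairwise distinct in order to apply $3$-transitivity on the primed side, which should be argued from the hypothesis (e.g.\ a coincidence on the primed side forces the cross ratio to be $0$, $1$ or $\infty$, hence a matching coincidence on the unprimed side).
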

				\begin{proof}
					The statement is a consequence of the well-known property of the cross ratio that given points $u_j, v_j\in\widehat{\C}$, $j=1,2,3,4$, then
					\[[u_1,u_2,u_3,u_4]=[v_1,v_2,v_3,v_4]\]
					if and only if there is $\psi\in\Aut(\widehat{\C})$ with $\psi(u_j)=v_j$, $j=1,2,3,4$, see \cite[p.~78]{Ahlfors1979}.
				\end{proof}
				\begin{remark}\label{rem:scalarproductforM}
					To see that the cross ratio indeed is the counterpart to $\langle\cdot,\cdot\rangle$ note the following.

					Let $T\in\mathcal{M}$ be as in Lemma \ref{lem:scalarproductforautomorphisms}. This is equivalent to $R:=S\circ T\circ S^{-1}\in\mathrm{SO}(3,\C)$ such that $R(\mathbf{x})=\mathbf{x}'$ and $R(\mathbf{y})=\mathbf{y}'$, where $\mathbf{x},\mathbf{y},\mathbf{x}',\mathbf{y}'$ are the preimages of $(z,w),(u,v),(z',w'),(u',v')$ under $S^{-1}$. By the orthogonality of $R$ this again is equivalent to $\langle \mathbf{x},\mathbf{y}\rangle=\langle R\mathbf{x},R\mathbf{y}\rangle=\langle\mathbf{x}',\mathbf{y}'\rangle$.

					Now assume $u,v,u',v'\in\C$. Translating the above observations to $\Omega$ and using the definition of $S$, see \eqref{eq:Stereographic}, yields
					\begin{align}
						&\phantom{\iff}\langle S(z,w),S(u,v)\rangle=\langle S(z',w'),S(u',v')\rangle\nonumber\\
						&\iff\frac{1+\psi_{u,v}(z)/\psi_{u,v}(1/w)}{1-\psi_{u,v}(z)/\psi_{u,v}(1/w)}=\frac{1+\psi_{u',v'}(z')/\psi_{u',v'}(1/w')}{1-\psi_{u',v'}(z')/\psi_{u',v'}(1/w')}\nonumber\\
						&\iff\psi_{u,v}(z)/\psi_{u,v}(1/w)=\psi_{u',v'}(z')/\psi_{u',v'}(1/w')\\
						&\iff  \left[z,1/w,u,1/v\right]=\left[z',1/w',u',1/v'\right]\label{eq:psirelation} \, .
					\end{align}
					Hence, we can understand Lemma \ref{lem:scalarproductforautomorphisms} as follows: there is $T\in\mathcal{M}$ such that $T(z,w)=(z',w')$ and ${T(u,v)=(u',v')}$ if and only if the product of the components of $T_{u',v'}(z',w')$ equals the product of the components of $T_{u,v}(z,w)$. Here, $T_{u,v}$ can be replaced by $\widetilde{T}_{u,v}$ if both automorphisms are defined, and $T_{u,v}$ needs to be replaced by $\widetilde{T}_{u,v}$ if only the second automorphism is defined. The same applies for $T_{u',v'}$. Note that these observation provides another proof of Lemma \ref{lem:scalarproductforautomorphisms} by means of Lemma~\ref{lem:invarianceonOmega}.
				\end{remark}

				In order to explicitly describe (complex) zonal harmonics we will also need the following result.
				\begin{proposition}[Proposition 7.2 in \cite{HeinsMouchaRoth3}]\label{lem:Pullbackproposition}
					Let $m\in\N_0$ and $T=T_\psi$ or $T=T_\psi\circ\mathcal{S}$ where $\psi\in\Aut(\widehat{\C})$. Then it holds for all $(z,w)\in\Omega$
					\begin{equation*}\label{eq:MoebiusPullback}
						\big(
						P_0^{-m} \circ T
						\big)(z,w)=\sum_{j=-m}^{m}
						\frac{(-m-j)_j}{(m-j+1)_j}P^{-m}_{-j}\left(\psi^{-1}(0),1/\psi^{-1}(\infty)\right)
						P^{-m}_{j}(z,w)\,.    .
					\end{equation*}
				\end{proposition}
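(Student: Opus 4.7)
The plan is to expand $P_0^{-m}\circ T$ in the finite-dimensional eigenspace $X_{4m(m+1)}(\Omega)$ using the PFM basis and then identify the resulting coefficients.

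First, since $\mathcal{M}$ commutes with $\Delta_{zw}$ by \cite[Th.~5.2]{HeinsMouchaRoth1}, the function $P_0^{-m}\circ T$ again lies in $X_{4m(m+1)}(\Omega)$. By \cite[Th.~8.1]{HeinsMouchaRoth3} (equivalently, by Proposition~\ref{prop:Jacobiorthogonality}), $\{P_j^{-m}:|j|\le m\}$ is a basis of this $(2m+1)$-dimensional space, so there are unique scalars $c_j$ with $P_0^{-m}\circ T=\sum_{j=-m}^{m}c_j P_j^{-m}$, and the goal is to show $c_j=\tfrac{(-m-j)_j}{(m-j+1)_j}P_{-j}^{-m}(\psi^{-1}(0),1/\psi^{-1}(\infty))$. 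The case $T=T_\psi\circ\mathcal{S}$ will reduce to $T=T_\psi$ through the symmetry $P_j^{-m}(w,z)=P_{-j}^{-m}(z,w)$ from Remark~\ref{rem:PFMIntro}(c), which swaps $j\leftrightarrow -j$ in the expansion, so I focus on $T=T_\psi$.

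Next, I extract the $c_j$ from the vanishing pattern of the PFM on the two coordinate slices. The rational form~\eqref{eq:PoissonFourierExplicit} immediately gives $P_j^{-m}(z,0)=(-1)^j\binom{m}{|j|}z^{|j|}$ for $j\le 0$ and $P_j^{-m}(z,0)=0$ for $j>0$, and symmetrically at $z=0$. Restricting $P_0^{-m}\circ T_\psi=\sum_j c_j P_j^{-m}$ to $\{w=0\}$ therefore becomes an equality of polynomials of degree $m$ in $z$, and matching the coefficient of $z^{|j|}$ pins down $c_j$ uniquely for $j\le 0$; the slice $\{z=0\}$ handles the indices $j>0$. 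Writing $\psi(z)=(az+b)/(cz+d)$ with $ad-bc=1$ and expanding the hypergeometric factor of $P_0^{-m}(\psi(z),1/\psi(\infty))=P_0^{-m}(\psi(z),c/a)$ via \eqref{eq:PoissonFourierExplicit} renders this polynomial as an explicit Pochhammer sum in $z$.

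The hard step is recognizing these concrete coefficients as $\tfrac{(-m-j)_j}{(m-j+1)_j}P_{-j}^{-m}(\psi^{-1}(0),1/\psi^{-1}(\infty))$. Expanding the candidate via \eqref{eq:PoissonFourierExplicit} at the point $(\psi^{-1}(0),1/\psi^{-1}(\infty))=(-b/a,-c/d)$ produces a second hypergeometric sum in $a,b,c,d$, and the agreement with the polynomial coefficients from the previous paragraph reduces to a Chu--Vandermonde-type contiguous identity for $\F$, of the same flavour as the one exploited in the proof of Lemma~\ref{lem:PFMwithOldSchauder}. An alternative route I would try in parallel starts from the Fourier integral representation of $P_0^{-m}$ (Remark~\ref{rem:PFMIntro}(a)): the substitution $\zeta\mapsto\psi(\zeta)$ on $\partial\mathbb{D}$ and a subsequent contour deformation back, combined with a residue extraction, should directly yield an evaluation of the Poisson-kernel factor at $\psi^{-1}(0)$ and $1/\psi^{-1}(\infty)$ and sidestep the coefficient matching entirely. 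The main obstacle in either strategy is this final hypergeometric identification, which carries the bulk of the combinatorial content of the proposition.
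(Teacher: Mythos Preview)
The paper does not contain its own proof of this statement: the proposition is quoted verbatim as ``Proposition~7.2 in \cite{HeinsMouchaRoth3}'' and is used as a black box (only to derive Proposition~\ref{prop:zonaliszeroPoissonForier}). So there is nothing here to compare your approach against.

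On its own merits, your plan is sound up to the point you yourself flag. The invariance of $X_{4m(m+1)}(\Omega)$ under $\mathcal{M}$ and the PFM basis give the expansion $P_0^{-m}\circ T=\sum_j c_j P_j^{-m}$ for free, and restricting to $\{w=0\}$ (resp.\ $\{z=0\}$) does isolate the coefficients with $j\le 0$ (resp.\ $j\ge 0$); your computation $P_j^{-m}(z,0)=(-1)^{|j|}\binom{m}{|j|}z^{|j|}$ for $j\le 0$ is correct, and on that slice $P_0^{-m}(\psi(z),c/a)$ is indeed a polynomial of degree $m$ in $z$. What remains is exactly what you call the hard step, the hypergeometric identification of those polynomial coefficients with $\tfrac{(-m-j)_j}{(m-j+1)_j}P_{-j}^{-m}(\psi^{-1}(0),1/\psi^{-1}(\infty))$, and you have not carried it out. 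Since this is where the entire combinatorial content sits, the proposal as it stands is an outline, not a proof.

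One caution about your reduction of the case $T=T_\psi\circ\mathcal{S}$: swapping $(z,w)$ in the $T_\psi$ formula produces $\sum_j\alpha_j\,P_{-j}^{-m}(u,v)\,P_j^{-m}(w,z)=\sum_j\alpha_j\,P_{j}^{-m}(u,v)\,P_{j}^{-m}(z,w)$, which is not literally $\sum_j\alpha_j\,P_{-j}^{-m}(u,v)\,P_{j}^{-m}(z,w)$. The symmetry $P_j^{-m}(w,z)=P_{-j}^{-m}(z,w)$ swaps the \emph{second} factor, not the first, so the ``same formula for $T_\psi\circ\mathcal{S}$'' is more delicate than a one-line reindexing. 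You would need an additional argument (or to consult how \cite{HeinsMouchaRoth3} formulates this case) before declaring the swap case handled.
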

				\begin{remark}
					With the help of Proposition \ref{lem:Pullbackproposition} one can show, see \cite[Cor.~8.4]{HeinsMouchaRoth3}, that the set ${\{P_0^{-m}\circ T\,:\, T\in\mathcal{M}\}}$ spans $X_{4m(m+1)}(\Omega)$.
				\end{remark}

			\subsection{Zonal harmonics}\label{sub:zonal}\

			\medskip

			Fix $m\in\N_0$. By Remark \ref{rem:PFMSchauderbasis} the set $\{Y_n^{-m}\,:\, m\in\N_0,\,n\in\Z,\,|n|\leq m\}$ where
			\begin{equation}\label{eq:orthonormalPoissonFourier}
				Y_n^{-m}:=\sqrt{2m+1}\binom{m}{|n|}^{-1/2}\binom{-m-1}{|n|}^{1/2}P_n^{-m}
			\end{equation}
			forms an orthonormal basis of the eigenspace $X_{4m(m+1)}(\Omega)$. Similarly to the real theory we define:
			\begin{definition}\label{def:complexzonal}
				Let $m\in\N_0$ and $Y_n^{-m}$, $n=-m,\ldots,m$, be given by \eqref{eq:orthonormalPoissonFourier}. We define the \emph{complex zonal harmonic ${Z_m:\Omega\times\Omega\to\C}$ of degree $m$} by
				\begin{equation}\label{eq:zonalharmonic}
					Z_m\big((z,w),(u,v)\big):=\sum_{j=-m}^mY_j^{-m}(z,w)Y_{-j}^{-m}(u,v)\,.
				\end{equation}
			\end{definition}
			A closer look at the previous definition reveals that the complex zonal harmonics can be characterized using particular automorphisms of $\mathcal{M}$, namely $T_{u,v}=T_{\psi_{u,v}}$ and $\widetilde{T}_{u,v}=T_{\tilde{\psi}_{u,v}}$ where
			\begin{equation}\label{eq:phizw}
				\psi_{u,v}(z)=\frac{u-z}{1-vz}\quad\text{resp.}\quad\widetilde{\psi}_{u,v}(z)=\frac{u}{v}\frac{1-vz}{u-z},\qquad z\in\widehat{\C} \,
			\end{equation}
			for $u,v\in\C$ resp.\ $u,v\in\widehat{\C}^*$ (see also \eqref{eq:MoebiusGenerators} and \eqref{eq:Tzwtilde}).
			\begin{proposition}\label{prop:zonaliszeroPoissonForier}
				Let $m\in\N_0$ and $(z,w),(u,v)\in\Omega$. Then it holds that
				\begin{subequations}\label{eq:zonaliszeroPoissonForier}
					\begin{alignat}{2}
						\label{eq:zonaliszeroPoissonForier1}Z_m\big((z,w),(u,v)\big)&=(2m+1)\left(P_0^{-m}\circ T_{u,v}\right)(z,w)\qquad&&\text{if }u,v\in\C \, ,\\
						Z_m\big((z,w),(u,v)\big)&=(2m+1)\left(P_0^{-m}\circ \widetilde{T}_{u,v}\right)(z,w)\qquad&&\text{if }u,v\in\widehat{\C}^* \,.\label{eq:zonaliszeroPoissonForiertilde}
					\end{alignat}
				\end{subequations}
			\end{proposition}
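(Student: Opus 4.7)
The plan is to reduce both parts of the proposition to the pullback formula of Proposition~\ref{lem:Pullbackproposition}, which expands $P_0^{-m}\circ T_\psi$ in the PFM basis of the eigenspace $X_{4m(m+1)}(\Omega)$. Once \eqref{eq:orthonormalPoissonFourier} is substituted into the definition \eqref{eq:zonalharmonic} of $Z_m$, both sides of \eqref{eq:zonaliszeroPoissonForier1} become explicit $\C$-linear combinations of the products $P_{-j}^{-m}(u,v)\,P_{j}^{-m}(z,w)$ for $|j|\leq m$, and the identity reduces to a coefficient-by-coefficient match. The identity \eqref{eq:zonaliszeroPoissonForiertilde} will then follow from \eqref{eq:zonaliszeroPoissonForier1} by an invariance argument combined with analytic continuation.

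Concretely, for \eqref{eq:zonaliszeroPoissonForier1} I observe that $T_{u,v}=T_{\psi_{u,v}}$ with $\psi_{u,v}(x)=(u-x)/(1-vx)$, so that $\psi_{u,v}^{-1}(0)=u$ and $1/\psi_{u,v}^{-1}(\infty)=v$. Proposition~\ref{lem:Pullbackproposition} then gives
\[
(P_0^{-m}\circ T_{u,v})(z,w)=\sum_{j=-m}^{m}\frac{(-m-j)_j}{(m-j+1)_j}\,P_{-j}^{-m}(u,v)\,P_{j}^{-m}(z,w),
\]
while substituting \eqref{eq:orthonormalPoissonFourier} into \eqref{eq:zonalharmonic} yields
\[
Z_m\bigl((z,w),(u,v)\bigr)=(2m+1)\sum_{j=-m}^{m}\binom{m}{|j|}^{-1}\binom{-m-1}{|j|}\,P_{-j}^{-m}(u,v)\,P_{j}^{-m}(z,w).
\]
Hence \eqref{eq:zonaliszeroPoissonForier1} reduces to the Pochhammer--binomial identity
\[
\frac{(-m-j)_j}{(m-j+1)_j}=\binom{m}{|j|}^{-1}\binom{-m-1}{|j|},\qquad -m\leq j\leq m,
\]
which I handle by direct expansion, separately for $j\geq 0$ and for $j<0$; in both cases both sides evaluate to $(-1)^{|j|}(m-|j|)!(m+|j|)!/(m!)^2$, using the reciprocal convention $(a)_{-k}=1/(a-k)_{k}$ for Pochhammer symbols with negative length when $j<0$.

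For \eqref{eq:zonaliszeroPoissonForiertilde} I proceed via an invariance argument. On the open subset where $u,v\in\C^{*}$ both $T_{u,v}$ and $\widetilde{T}_{u,v}$ are defined, and both exchange $(0,0)$ with $(u,v)$ by Lemma~\ref{lem:TzwProperties}(a). Therefore $\widetilde{T}_{u,v}\circ T_{u,v}^{-1}$ fixes $(0,0)$, and by Lemma~\ref{lem:TzwProperties}(d) it belongs to $\{\varrho_{\gamma},\,\varrho_{\gamma}\circ\mathcal{S}\}_{\gamma\in\C^{*}}$. Now $P_{0}^{-m}$ is invariant under every $\varrho_{\gamma}$ by the $\Omega$-homogeneity \eqref{eq:nhomogeniety} applied at $n=0$, and invariant under $\mathcal{S}$ by the PFM symmetry of Remark~\ref{rem:PFMIntro}(c). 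Consequently $P_{0}^{-m}\circ\widetilde{T}_{u,v}=P_{0}^{-m}\circ T_{u,v}$ on $\{(u,v)\in\C^{*}\times\C^{*}\}$, which together with \eqref{eq:zonaliszeroPoissonForier1} proves \eqref{eq:zonaliszeroPoissonForiertilde} there. Both sides of \eqref{eq:zonaliszeroPoissonForiertilde} are holomorphic in $(u,v)$ on the connected domain $\Omega\cap(\widehat{\C}^{*}\times\widehat{\C}^{*})$, so the identity theorem extends the equality to all $u,v\in\widehat{\C}^{*}$ with $uv\neq 1$.

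The main obstacle is bookkeeping rather than anything conceptual: Proposition~\ref{lem:Pullbackproposition} pairs $P_{-j}^{-m}(u,v)$ with $P_{j}^{-m}(z,w)$, whereas the orthonormal definition of $Z_{m}$ couples $Y_{j}^{-m}(z,w)$ with $Y_{-j}^{-m}(u,v)$; checking that the resulting coefficient $\binom{m}{|j|}^{-1}\binom{-m-1}{|j|}$ is genuinely symmetric in $j\leftrightarrow -j$ and agrees with the Pochhammer ratio $(-m-j)_{j}/(m-j+1)_{j}$ for every $j\in\{-m,\dots,m\}$ is the sole computational step. The invariance deduction, once Lemma~\ref{lem:TzwProperties}(d) and the $\Omega$-homogeneity of $P_{0}^{-m}$ are invoked, is immediate.
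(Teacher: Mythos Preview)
Your proof is correct and, for \eqref{eq:zonaliszeroPoissonForier1}, identical in substance to the paper's: both simply invoke Proposition~\ref{lem:Pullbackproposition} with $T=T_{u,v}$ and observe that $(\psi_{u,v}^{-1}(0),1/\psi_{u,v}^{-1}(\infty))=(u,v)$; you additionally spell out the Pochhammer--binomial identity that makes the coefficients match, which the paper leaves implicit.

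For \eqref{eq:zonaliszeroPoissonForiertilde} the routes diverge slightly. The paper applies Proposition~\ref{lem:Pullbackproposition} a second time, now directly to $T=\widetilde{T}_{u,v}$. You instead deduce it from the already-established \eqref{eq:zonaliszeroPoissonForier1}: on the overlap $u,v\in\C^{*}$ the composite $\widetilde{T}_{u,v}\circ T_{u,v}^{-1}$ fixes $(0,0)$, hence by Lemma~\ref{lem:TzwProperties}(d) lies in $\{\varrho_{\gamma},\,\varrho_{\gamma}\circ\mathcal{S}\}$, and both of these leave $P_{0}^{-m}$ invariant by $0$-$\Omega$-homogeneity and the symmetry $P_{0}^{-m}(z,w)=P_{0}^{-m}(w,z)$; analytic continuation in $(u,v)$ over the connected set $\Omega\cap(\widehat{\C}^{*})^{2}$ finishes the job. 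This is a perfectly valid alternative and has the minor advantage that you never need to identify the explicit $\psi$ with $\widetilde{T}_{u,v}\in\{T_{\psi},\,T_{\psi}\circ\mathcal{S}\}$ and recompute $(\psi^{-1}(0),1/\psi^{-1}(\infty))$ in that case. The paper's direct application is shorter but relies on recognising $\widetilde{T}_{u,v}$ in the form required by Proposition~\ref{lem:Pullbackproposition}.
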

			 Note that \eqref{eq:zonaliszeroPoissonForier1} and \eqref{eq:zonaliszeroPoissonForiertilde} coincide for $z,w\in\C^*$ because in this case the product of the components of $T_{u,v}(z,w)$ equals the product of the components of $\widetilde{T}_{u,v}(z,w)$.
			\begin{proof}[Proof of Proposition \ref{prop:zonaliszeroPoissonForier}]
				This is Lemma \ref{lem:Pullbackproposition} for $T=T_{u,v}$ if $u,v\in\C$ or $T=\widetilde{T}_{u,v}$ if $u,v\in\widehat{\C}^*$.
			\end{proof}

			We can now prove the main result of this chapter, which is our counterpart of \eqref{eq:realzonalinvariance} and includes Theorem \ref{thm:zonalIntro}.
			\begin{theorem}\label{thm:zonalinvariance}
				Let $m\in\N_0$ and $Z_m$ be the complex zonal harmonic of degree $m$.
				\begin{itemize}
					\item[(a)] For every $T\in\mathcal{M}$, $(z,w),(u,v)\in\Omega$ we have $Z_m\left(T(z,w),T(u,v)\right)=Z_m\left((z,w),(u,v)\right)$. In particular, $Z_m$ only depends on the vector $T_{u,v}(z,w)$ resp.\ $\widetilde{T}_{u,v}(z,w)$.
					\item[(b)] Fix $(u,v)\in\Omega$. Then $Z^{u,v}_m:=Z_m\left(\cdot,(u,v)\right)$ is the unique (up to a multiplicative factor) function in $X_{4m(m+1)}(\Omega)$ that is invariant with respect to automorphisms in $\mathcal{M}$ that fix $(u,v)$, i.e.\ for every $T\in\mathcal{M}$ such that $T(u,v)=(u,v)$ it holds that $Z_m^{u,v}\circ T=Z_m^{u,v}$.
				\end{itemize}
			\end{theorem}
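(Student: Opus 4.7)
The plan for part (a) is to reduce the $\mathcal{M}$-invariance of $Z_m$ to two elementary symmetries of the single function $P_0^{-m}$: first, since $P_0^{-m}$ is $0$-homogeneous in the sense of \eqref{eq:nhomogeniety}, it satisfies $P_0^{-m}\circ\varrho_\gamma = P_0^{-m}$ for every $\gamma\in\C^*$; second, by Remark \ref{rem:PFMIntro}(b) it is symmetric in its two arguments, so $P_0^{-m}\circ\mathcal{S}=P_0^{-m}$. Given $T\in\mathcal{M}$, set $(z',w')=T(z,w)$ and $(u',v')=T(u,v)$, and use Proposition \ref{prop:zonaliszeroPoissonForier} to rewrite both sides of the desired identity in terms of $P_0^{-m}\circ T_{u',v'}(z',w')$ and $P_0^{-m}\circ T_{u,v}(z,w)$ (or the analogue with $\widetilde{T}_{u',v'}$ and $\widetilde{T}_{u,v}$ when a coordinate is at infinity). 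Lemma \ref{lem:invarianceonOmega} then supplies $\gamma\in\C^*$ and $\tau\in\{0,1\}$ such that $T_{u',v'}(z',w') = \varrho_\gamma\circ\mathcal{S}^\tau\circ T_{u,v}(z,w)$ (or the tilde version), and the two symmetries of $P_0^{-m}$ collapse this to equality of values. The three cases in Lemma \ref{lem:invarianceonOmega} cover all configurations of finite and infinite coordinates. The second sentence of (a) is immediate from Lemma \ref{lem:scalarproductforautomorphisms}, since $Z_m((z,w),(u,v))$ is then a function of the $\mathcal{M}$-orbit of the pair, which by Remark \ref{rem:scalarproductforM} is encoded by the product of the components of $T_{u,v}(z,w)$ (or $\widetilde{T}_{u,v}(z,w)$).

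For part (b), the invariance of $Z_m^{u,v}$ under the stabilizer of $(u,v)$ is an immediate consequence of part (a). The uniqueness assertion is the core of the argument, and the strategy is a conjugation reduction to the base point $(0,0)$. Given $F\in X_{4m(m+1)}(\Omega)$ invariant under every $T\in\mathcal{M}$ fixing $(u,v)$, pick $T_0\in\mathcal{M}$ with $T_0(0,0)=(u,v)$; such a $T_0$ exists by Lemma \ref{lem:MoebiusGenerators} together with Lemma \ref{lem:TzwProperties} (concretely, $T_0=T_{u,v}$ when $u,v\in\C$, $T_0=\widetilde{T}_{u,v}$ when $u,v\in\widehat{\C}^*$, and one uses the flip $\mathcal{F}$ to reach $(\infty,\infty)$). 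Set $G:=F\circ T_0$. Since $\mathcal{M}$ preserves each eigenspace $X_{4m(m+1)}(\Omega)$, we have $G\in X_{4m(m+1)}(\Omega)$; and for every $T$ fixing $(0,0)$ the conjugate $T_0\circ T\circ T_0^{-1}$ fixes $(u,v)$, whence $G\circ T = F\circ T_0\circ T = F\circ(T_0 T T_0^{-1})\circ T_0 = F\circ T_0 = G$. Thus $G$ is stabilizer-invariant at $(0,0)$.

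By Lemma \ref{lem:TzwProperties}(d) the stabilizer of $(0,0)$ in $\mathcal{M}$ consists of the rotations $\varrho_\gamma$ and their composition with the swap $\mathcal{S}$. Expanding $G = \sum_{n=-m}^m c_n P_n^{-m}$ in the PFM basis of $X_{4m(m+1)}(\Omega)$ and exploiting \eqref{eq:nhomogeniety}, the identity $G\circ\varrho_\gamma = G$ for all $\gamma\in\C^*$ yields $\sum_n c_n\gamma^{-n} P_n^{-m} = \sum_n c_n P_n^{-m}$, which by linear independence of the PFM forces $c_n=0$ for $n\ne 0$, so $G = c_0 P_0^{-m}$; the $\mathcal{S}$-invariance requirement is automatic in view of $P_0^{-m}\circ\mathcal{S}=P_0^{-m}$. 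Consequently $F = c_0\,P_0^{-m}\circ T_0^{-1}$. Finally, Proposition \ref{prop:zonaliszeroPoissonForier} applied to $Z_m^{u,v}$ together with part (a) (applied to the automorphism $T_0^{-1}$) identifies $P_0^{-m}\circ T_0^{-1}$ as a scalar multiple of $Z_m^{u,v}$, completing the proof.

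The main obstacles that I expect are the careful bookkeeping in the various cases of Lemma \ref{lem:invarianceonOmega} and Lemma \ref{lem:TzwProperties} (finite vs.\ infinite coordinates, and the appearance of $\widetilde{T}_{u,v}$ versus $T_{u,v}$), and verifying that the correct choice of $T_0$ exists at points with an infinite coordinate. A minor subsidiary check is that $Z_m^{u,v}$ is not identically zero, which follows because $P_0^{-m}\not\equiv 0$ and $T_0$ is a biholomorphism of $\Omega$; this ensures the ``multiplicative factor'' in the uniqueness statement is well-defined.
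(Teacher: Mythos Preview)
Your proposal is correct and follows essentially the same route as the paper: both parts rest on Proposition~\ref{prop:zonaliszeroPoissonForier}, the $0$-$\Omega$-homogeneity (and swap-symmetry) of $P_0^{-m}$, and a conjugation reduction to the base point $(0,0)$ via Lemma~\ref{lem:TzwProperties}. The only notable difference is in the uniqueness step at $(0,0)$: the paper outsources this to \cite[Th.~4.3]{HeinsMouchaRoth3}, whereas you prove it directly by expanding in the PFM basis and using \eqref{eq:nhomogeniety} to kill the $n\neq 0$ coefficients---your argument is thus slightly more self-contained, but otherwise the two proofs are interchangeable.
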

			\begin{proof}
				\begin{enumerate}[(a)]
					\item The dependence on the vector $T_{u,v}(z,w)$ resp.\ $\widetilde{T}_{u,v}(z,w)$ only was already proven in~\eqref{eq:zonaliszeroPoissonForier}. The other claim is Lemma \ref{lem:scalarproductforautomorphisms} and the $0$-$\Omega$-homogeneity of $P_0^{-m}$ (see~\eqref{eq:nhomogeniety}) because for $(u,v),T(u,v)\in\Omega\cap\C^2$ it holds that
					\begin{align*}
						Z_m\big(T(z,w),T(u,v)\big)&\overset{\eqref{eq:zonaliszeroPoissonForier1}}{=}(2m+1)\left(P_0^{-m}\circ T_{T(u,v)}\right)\big(T(z,w)\big)\\
						&\overset{\phantom{\eqref{eq:zonaliszeroPoissonForier1}}}{=}(2m+1)\left(P_0^{-m}\circ T_{u,v}\right)(z,w)\overset{\eqref{eq:zonaliszeroPoissonForier1}}{=}Z_m\big((z,w),(u,v)\big)\,.
					\end{align*}
					The case $(u,v),T(u,v)\in\Omega\cap(\widehat{\C}^*)^2$ is handled analogously by \eqref{eq:zonaliszeroPoissonForiertilde}.

					\item As a linear combination of elements in $X_{4m(m+1)}(\Omega)$, clearly, $Z^{u,v}_m\in X_{4m(m+1)}(\Omega)$. For the other property let us first consider $(u,v)=(0,0)$. By Lemma \ref{lem:TzwProperties}(d) the automorphisms $T\in\mathcal{M}$ that fix $(0,0)$ are precisely of the form $T=\varrho_\gamma$ or $\varrho_\gamma\circ\mathcal{S}$ (see \eqref{eq:MoebiusGenerators} and \eqref{eq:swap} for the definitions of $\varrho_\gamma$ and $\mathcal{S}$). Now assume there is $F\in \mathcal{H}(\Omega)$ such that
					\[F\left(\varrho_\gamma(z,w)\right)=F(\gamma z,w/\gamma)=F(z,w)\]
					for all $(z,w)\in\Omega$. Theorem 4.3 in \cite{HeinsMouchaRoth3} implies that $F$ is a scalar multiple of $P_0^{-m}$. Then the claim follows by \eqref{eq:zonaliszeroPoissonForier} and the {$0$-($\Omega$-)homogeneity} of $P_0^{-m}$. For the other option for $T$ apply the previous argument to $F\circ\mathcal{F}$ and use that {$P_0^{-m}\circ\mathcal{F}=P_0^{-m}$} and $\varrho_\gamma\circ\mathcal{F}=\mathcal{F}\circ\varrho_{1/\gamma}$ (see Lemma \ref{lem:MoebiusGenerators} for the definition of $\mathcal{F}$).

					Next choose $(u,v)\in\Omega\cap(\C^*\times\C^*)$. Again by Lemma \ref{lem:TzwProperties}(a) the automorphisms $T\in\mathcal{M}$ that fix the point $(u,v)$ are precisely of the form $T=T_1\circ\varrho_\gamma\circ T_2$ or $T=T_1\circ\varrho_\gamma\circ\mathcal{S}\circ T_2$ where {$T_1,T_2\in\{T_{u,v},\widetilde{T}_{u,v}\}$}. First assume that there is $F\in \mathcal{H}(\Omega)$ such that
					\[F\left((T_{u,v}\circ\varrho_\gamma\circ T_{u,v})(z,w)\right)=F(z,w)\]
					for all $(z,w)\in\Omega$. This condition is equivalent to
					\[F\left((T_{u,v}\circ\varrho_\gamma)(z,w)\right)=F(T_{u,v}(z,w))\]
					for all $(z,w)\in\Omega$. Now we can mimic the argument of the special case and conclude that $(F\circ T_{u,v})(z,w)=\alpha (P_0^{-m}\circ T_{u,v})(z,w)$ for all $(z,w)\in\Omega$ and some $\alpha\in\C$. Thus, the claim follows by \eqref{eq:zonaliszeroPoissonForier}. The other options for $T$ can be treated similarly.

					Note that the cases that either $u$ or $v$ equals $0$ and that either $u$ or $v$ equals $\infty$ are also included in the previous considerations. Moreover, the case $(u,v)=(\infty,\infty)$ is proven analogously using Lemma \ref{lem:TzwProperties}(e).\qedhere
				\end{enumerate}
			\end{proof}
			\begin{remark}
				The automorphisms $\varrho_\gamma$, $\gamma\in\C^*$, precisely correspond to the rotations in $\mathrm{SO}(3,\C)$ that fix the north resp.\ south pole of the sphere. The swap $\mathcal{S}$ translates to reflection at the $(z_2-z_3)$-plane and the flip $\mathcal{F}$ to reflection at the origin.
			\end{remark}
			There are some additional properties of the real zonal harmonics, see e.g.\ \cite[Prop.~5.27]{axler2001} and \cite[Th.~3.5.9]{simon15}, that carry over to the complex case as well.
			\begin{corollary}\label{cor:zonalproperties}
				Let $m\in\N_0$ and $Z_m$ be the complex zonal harmonic of degree $m$. The following properties hold
				\begin{itemize}
					\item[(a)] \emph{Symmetry}: for all $(z,w),(u,v)\in\Omega$ it is $Z_m\big((z,w),(u,v)\big)=Z_m\big((u,v),(z,w)\big)$.
					\item[(b)] \emph{Definiteness}: for all $(z,w)\in\Omega$ it is $Z_m\big((z,w),(z,w)\big)=2m+1$.
					\item[(c)] \emph{Reproducing property}: for all $(z,w)\in\Omega$ and $f\in X_{4m(m+1)}(\Omega)$ we have
					\begin{equation*}
						\frac{i}{\pi}\int\limits_{\widehat{\C}} f(\eta,-\cc{\eta})Z_m\big((z,w),(\eta,-\cc{\eta})\big)\frac{d\eta\,d\cc{\eta}}{(1+|\eta|^2)^2}=f(z,w) \, .
					\end{equation*}
				\end{itemize}
			\end{corollary}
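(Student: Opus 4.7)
I would prove the three parts of Corollary~\ref{cor:zonalproperties} in the stated order, each by a short direct argument drawing on material already available in the section.

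Part~(a), symmetry, is visible straight from the definition \eqref{eq:zonalharmonic}: reindexing the finite sum via $j\mapsto-j$ interchanges the roles of the two arguments, so no further input is required.

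For part~(b), I would exploit the transitivity of the action of $\mathcal{M}$ on $\Omega$. Given $(z,w)\in\Omega$, one finds $\psi\in\Aut(\widehat{\C})$ sending $z\mapsto 0$ and $1/w\mapsto\infty$, so that $T_\psi(z,w)=(0,0)$ (the four sub-cases according as $z$ or $w$ is infinite are all routine). Theorem~\ref{thm:zonalinvariance}(a) then gives $Z_m((z,w),(z,w))=Z_m((0,0),(0,0))$, and the right-hand side is evaluated by inspection of \eqref{eq:PoissonFourierExplicit}: only the term $k=0$ survives, giving $P_j^{-m}(0,0)=\delta_{j,0}$ and hence $Y_j^{-m}(0,0)=\sqrt{2m+1}\,\delta_{j,0}$, so that $Z_m((0,0),(0,0))=Y_0^{-m}(0,0)^2=2m+1$.

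For part~(c), I would verify the identity on the basis $\{P_k^{-m}\}_{|k|\leq m}$ of $X_{4m(m+1)}(\Omega)$ and then invoke linearity. Inserting \eqref{eq:zonalharmonic} and \eqref{eq:orthonormalPoissonFourier} one obtains a finite sum which may be exchanged with the integral. The only ingredient beyond Proposition~\ref{prop:Jacobiorthogonality} is the elementary symmetry $\overline{P_n^{-m}(\eta,-\cc{\eta})}=(-1)^{|n|}P_{-n}^{-m}(\eta,-\cc{\eta})$, which follows at once from the explicit expansion \eqref{eq:PoissonFourierExplicit} combined with Remark~\ref{rem:PFMIntro}(c). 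Using additionally $\binom{-m-1}{|n|}=(-1)^{|n|}\binom{m+|n|}{|n|}$, one checks that $Y_{-j}^{-m}(\eta,-\cc{\eta})=\overline{Y_j^{-m}(\eta,-\cc{\eta})}$, so that $Z_m((z,w),(\eta,-\cc{\eta}))$ is in fact the reproducing kernel $\sum_j Y_j^{-m}(z,w)\overline{Y_j^{-m}(\eta,-\cc{\eta})}$ of the finite-dimensional Hilbert space $(X_{4m(m+1)}(\Omega),\langle\cdot,\cdot\rangle_\Omega)$. The reproducing property then drops out of the orthonormality of the $Y_j^{-m}$ stated in Remark~\ref{rem:PFMSchauderbasis}.

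The main (and essentially only) obstacle I expect is the sign bookkeeping in part~(c): the cancellation between the $(-1)^{|n|}$ arising from the complex conjugation $\overline{P_n^{-m}(\eta,-\cc{\eta})}=(-1)^{|n|}P_{-n}^{-m}(\eta,-\cc{\eta})$ and the factor $i^{|n|}$ hidden in the normalization constant $\binom{-m-1}{|n|}^{1/2}$ is what secretly identifies $Z_m$ with the reproducing kernel. Once this bookkeeping is in place, everything else is a routine consequence of the orthogonality developed in Section~\ref{sec:Jacobi}.
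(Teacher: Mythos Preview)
Your proposal is correct and follows essentially the same approach as the paper. The only cosmetic difference is in part~(b): the paper reaches $Z_m\big((z,w),(z,w)\big)=(2m+1)P_0^{-m}(0,0)=2m+1$ directly from Proposition~\ref{prop:zonaliszeroPoissonForier} (since $T_{z,w}(z,w)=(0,0)$), whereas you first invoke the M\"obius invariance of Theorem~\ref{thm:zonalinvariance}(a) and then evaluate the defining sum at $(0,0)$; both routes are one-liners, and for part~(c) your reproducing-kernel phrasing is exactly the computation the paper carries out via Proposition~\ref{prop:Jacobiorthogonality} and the conjugation identity $P_{-j}^{-m}(u,-\cc{u})=(-1)^j\overline{P_{j}^{-m}(u,-\cc{u})}$.
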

			\begin{proof}
				Part (a) holds by definition. Alternatively, one can use Lemma \ref{lem:scalarproductforautomorphisms}. For Part (b) we use that (whenever the respective quantity is defined)
				\[P_0^{-m}(T_{z,w}(z,w))=P_0^{-m}(\widetilde{T}_{z,w}(z,w))=P_0^{-m}(0,0)=1.\] Part (c) is a consequence of Proposition \ref{prop:Jacobiorthogonality} resp.\ \eqref{eq:Schaudercoefficient} and the identity
				\[P_{-j}^{-m}(u,-\cc{u})=(-1)^j\overline{P_{j}^{-m}(u,-\cc{u})}\,.\qedhere\]
			\end{proof}
			As the last part of this section we investigate how the theory of real zonal harmonics and its hyperbolic counterpart are included in our results. More precisely, we consider the functions in $\mathcal{A}(\widehat{\C})$ and $\mathcal{A}(\D)$ (see \eqref{eq:starproductC} and \eqref{eq:starproductD}) corresponding to our complex zonal harmonics, i.e.\ the restrictions of complex zonal harmonics to the ``rotated diagonal'' $\rotdiagonal$ and the ``diagonal'' $\diagonal$, see \eqref{eq:rotdiagonal} and \eqref{eq:diagonal}. For this purpose we introduce the mappings
			\[\ell_{\widehat{\C}}:\widehat{\C}\to\rotdiagonal,\;\; z\mapsto(z,-\overline{z})\quad\text{and}\quad \ell_\D:\D\to\diagonal,\;\; z\mapsto(z,\overline{z})\,.\]
			Note that the automorphisms in $\mathcal{M}$ that leave $\rotdiagonal$ invariant are precisely of the form $\varrho_\gamma\circ T_{\ell_{\widehat{\C}}(u)}$ or $\varrho_\gamma\circ T_{\ell_{\widehat{\C}}(u)}\circ\mathcal{F}$, for $u\in\C$, $\gamma\in\partial\D$. In other words: the automorphisms induced by \emph{rigid motions of $\widehat{\C}$}. The automorphisms in $\mathcal{M}$ that leave $\diagonal$ invariant are precisely of the form $\varrho_\gamma\circ T_{\ell_\D(u)}$, $u\in\D$, $\gamma\in\partial\D$, i.e.\ the automorphisms induced by \emph{automorphisms of $\D$}. Let $m\in\N_0$. In the following we denote
			\begin{equation*}
				X_{4m(m+1)}(\widehat{\C}):=\{f\in C^\infty(\widehat{\C})\,:\, \Delta_{\widehat{\C}}f=-4m(m+1)f\}\subseteq \mathcal{A}(\widehat{\C})
			\end{equation*}
			and
			\begin{equation*}
				X_{4m(m+1)}(\D):=\{f\in C^\infty(\D)\,:\,\Delta_{\D}f=4m(m+1)f\}\subseteq\mathcal{A}(\D)
			\end{equation*}
			where $\Delta_{\widehat{\C}}$ and $\Delta_\D$ were introduced in \eqref{eq:sphericalLaplaceIntro} and \eqref{eq:hyperbolicLaplaceIntro}.

			\begin{corollary}[Spherical case]\label{cor:zonalsphere}
				Let $m\in\N_0$. The function
				\begin{equation*}
					Z_m^{\widehat{\C}}:\widehat{\C}\times\widehat{\C}\to\C,\quad (z,u)\mapsto Z_m\big(\ell_{\widehat{\C}}(z),\ell_{\widehat{\C}}(u)\big)
				\end{equation*}
				is given by
				\begin{equation}\label{eq:zonalonsphere}
					Z_m^{\widehat{\C}}(z,u)=(2m+1)\left(P_0^{-m}\circ T_{\ell_{\widehat{\C}}(u)}\right)(\ell_{\widehat{\C}}(z))=(2m+1)(-1)^m\Jacobi_m^{(0,0)}\left(\langle \mathbf{x},\mathbf{y}\rangle\right)
				\end{equation}
				where $\mathbf{x}=S(\ell_{\widehat{\C}}(z)),\, \mathbf{y}=S(\ell_{\widehat{\C}}(u))\in\mathbb{S}_\R^2$. Moreover,
				\begin{enumerate}[(a)]
					\item  if either of the variables of $Z_m^{\widehat{\C}}$ is fixed, then the resulting function is in $X_{4m(m+1)}(\widehat{\C})$. Further, if $u\in\widehat{\C}$ is fixed, then $z\mapsto Z_m^{\widehat{\C}}\left(\cdot,u\right)$ is the unique (up to a multiplicative factor) function in $X_{4m(m+1)}(\widehat{\C})$ that is invariant with respect to rigid motions that fix $u$.
					\item \emph{Invariance}: $Z_m^{\widehat{\C}}(\psi(z),\psi(u))=Z_m^{\widehat{\C}}(z,u)$ for every rigid motion $\psi$ of $\widehat{\C}$.
					\item \emph{Symmetry}: $Z_m^{\widehat{\C}}(z,u)=Z_m^{\widehat{\C}}(u,z)$ for all $z,u\in\widehat{\C}$.
					\item \emph{Definiteness}: $Z_m^{\widehat{\C}}(z,z)=2m+1$ for all $z\in\widehat{\C}$.
					\item $Z_m^{\widehat{\C}}$ is real-valued.
					\item let $\{e_j:j=1,\ldots ,2m+1\}$ be a basis of $X_{4m(m+1)}(\widehat{\C})$. If $\{F_j:j=1,\ldots ,2m+1\}$ is a basis of $X_{4m(m+1)}(\Omega)$ orthonormal with respect to $\langle\cdot{,}\cdot\rangle_\Omega$ such that $e_j=F_j\circ \ell_{\widehat{\C}}$ for $j=1,\ldots ,2m+1$, then
					\begin{equation*}
						Z_m^{\widehat{\C}}(z,u)=\sum_{j=1}^{2m+1} \overline{e_j(u)}e_j(z)\,.
					\end{equation*}
				\end{enumerate}
				Additionally,
				\begin{enumerate}[(a)]\setcounter{enumi}{6}
					\item \emph{Reproducing property}: for all $z\in\widehat{\C}$, $f\in X_{4m(m+1)}(\widehat{\C})$ we have
						\begin{equation*}
							\frac{i}{\pi}\int\limits_{\widehat{\C}} f(\eta)Z_m^{\widehat{\C}}(z,\eta)\frac{d\eta\,d\cc{\eta}}{(1+|\eta|^2)^2}=f(z) \, .
					\end{equation*}
					\item for all $z,u\in\widehat{\C}$ it holds that $|Z_m^{\widehat{\C}}(z,u)|\leq 2m+1$.

				\end{enumerate}
			\end{corollary}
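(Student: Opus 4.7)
My plan is to reduce each assertion of the corollary to the analogous statement about the complex zonal harmonic $Z_m$ on $\Omega$ established in Theorem \ref{thm:zonalinvariance} and Corollary \ref{cor:zonalproperties}, and to transport it through the bijection $\rotdiagonal \leftrightarrow \widehat{\C}$ provided by $\ell_{\widehat{\C}}$. The identity principle on $\Omega$ (see Remark \ref{rem:scalarproductIntro}) is the bridge whenever an identity needs to be lifted from $\rotdiagonal$ to all of $\Omega$.

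For the closed-form expression, the first equality in \eqref{eq:zonalonsphere} is immediate from Proposition \ref{prop:zonaliszeroPoissonForier}, with \eqref{eq:zonaliszeroPoissonForiertilde} covering the case $u=\infty$. For the Jacobi polynomial form, I would substitute Lemma \ref{lem:Jacobipolynomials} with $n=0$, namely $P_0^{-m}(z,w) = (-1)^m \Jacobi_m^{(0,0)}\bigl(\tfrac{zw+1}{zw-1}\bigr)$, into the evaluation at $T_{u,-\overline{u}}(z,-\overline{z})$. A short direct computation shows that the product of the two components there equals $-|z-u|^2/|1+\overline{u}z|^2=:-\sigma$, so $\tfrac{zw+1}{zw-1}=\tfrac{\sigma-1}{\sigma+1}$. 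Comparing with the standard formula $\langle\mathbf{x},\mathbf{y}\rangle = 1 - 2|z-u|^2/\bigl((1+|z|^2)(1+|u|^2)\bigr)$ for the Euclidean inner product of stereographic images, one finds $\tfrac{\sigma-1}{\sigma+1}=-\langle\mathbf{x},\mathbf{y}\rangle$, and the symmetry $\Jacobi_m^{(0,0)}(-x)=(-1)^m\Jacobi_m^{(0,0)}(x)$ (\cite[p.~59]{szego75}) completes \eqref{eq:zonalonsphere}.

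Parts (a)--(e) are translations of structural results from $\Omega$ to $\widehat{\C}$. Membership of the slice functions in $X_{4m(m+1)}(\widehat{\C})$ is automatic since restriction to $\rotdiagonal$ carries $X_{4m(m+1)}(\Omega)$ into $X_{4m(m+1)}(\widehat{\C})$. For the uniqueness in (a), I would lift an invariant $f$ to its unique preimage $F\in X_{4m(m+1)}(\Omega)$, observe that $F\circ T_\psi = F$ holds on $\rotdiagonal$ for every rigid motion $\psi$ fixing $u$, and hence on all of $\Omega$ by the identity principle; conjugating by $T_{\ell_{\widehat{\C}}(u)}$ reduces matters to $u=0$, and holomorphy in $\gamma$ promotes invariance under the real circle $\{\varrho_{e^{it}}\}$ to invariance under all $\varrho_\gamma$, $\gamma\in\C^*$. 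The fact used in the proof of Theorem \ref{thm:zonalinvariance}(b) --- that the $\varrho$-invariant elements of $X_{4m(m+1)}(\Omega)$ are exactly the scalar multiples of $P_0^{-m}$ --- then finishes the argument. Invariance (b) is Theorem \ref{thm:zonalinvariance}(a) applied to $T=T_\psi$; symmetry (c) and definiteness (d) are Corollary \ref{cor:zonalproperties}(a),(b); reality (e) is read off the Jacobi formula.

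Parts (f)--(h) are Hilbert-space consequences. Part (g) is the literal restriction of Corollary \ref{cor:zonalproperties}(c) to $(z,w)=\ell_{\widehat{\C}}(z)$. For (f), Corollary \ref{cor:eigenspaceHilbertspaceIntro} together with $e_j=F_j\circ\ell_{\widehat{\C}}$ yields $\tfrac{i}{\pi}\int_{\widehat{\C}} e_i(\eta)\overline{e_j(\eta)}(1+|\eta|^2)^{-2}\, d\eta\, d\overline{\eta} = \langle F_i,F_j\rangle_\Omega = \delta_{ij}$, so $\{e_j\}$ is an orthonormal basis of the finite-dimensional Hilbert space $X_{4m(m+1)}(\widehat{\C})$; combined with (g) and the reality (e), the function $\sum_{j=1}^{2m+1}\overline{e_j(u)}e_j(z)$ is the (basis-independent) reproducing kernel of this space and therefore coincides with $Z_m^{\widehat{\C}}$. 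Finally, (h) is a Cauchy--Schwarz estimate on the sum in (f) together with the definiteness (d): $|Z_m^{\widehat{\C}}(z,u)|^2 \le Z_m^{\widehat{\C}}(z,z)\,Z_m^{\widehat{\C}}(u,u) = (2m+1)^2$. The one genuinely delicate step is the Jacobi identification in the second paragraph, where signs and the $(-1)^m$ symmetry must be tracked carefully; everything else is a nearly mechanical transfer of results already in hand.
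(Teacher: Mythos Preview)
Your proof is correct and follows essentially the same scheme as the paper: derive \eqref{eq:zonalonsphere} from Proposition~\ref{prop:zonaliszeroPoissonForier} and Lemma~\ref{lem:Jacobipolynomials}, transfer (a)--(d) from Theorem~\ref{thm:zonalinvariance} and Corollary~\ref{cor:zonalproperties}, read (e) off the Jacobi formula, and obtain (f), (g) from the reproducing property in Corollary~\ref{cor:zonalproperties}(c).

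The one genuine difference is in (h). The paper uses the closed form \eqref{eq:zonalonsphere} together with the classical bound $|\Jacobi_m^{(0,0)}(x)|\le 1$ on $[-1,1]$ (\cite[Eq.~22.14.1]{abramowitz1984}). Your Cauchy--Schwarz argument on the sum in (f), combined with definiteness (d), is entirely self-contained and avoids that external estimate; conversely, the paper's route is independent of (f) and yields the bound directly from the explicit formula. Both are valid and short; yours is arguably the cleaner corollary of the structural results already in hand.
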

			\begin{proof}
				The explicit form \eqref{eq:zonalonsphere} of $Z_m^{\widehat{\C}}$ follows from Proposition \ref{prop:zonaliszeroPoissonForier}, Lemma \ref{lem:Jacobipolynomials} and the considerations in Remark \ref{rem:scalarproductforM} and in Section \ref{sub:complexsphere}. Parts (a) to (d) are direct consequences of the respective properties of $Z_m$, see Theorem \ref{thm:zonalinvariance} and Corollary \ref{cor:zonalproperties}. Part (e) follows from \eqref{eq:zonalonsphere} because $\Jacobi_m^{(0,0)}$ has real coefficients and $\langle \mathbf{x},\mathbf{y}\rangle\in\R$. For Part (f) note that $Z_m^{u,-\cc{u}}:=Z_m(\cdot,(u,-\cc{u}))\in X_{4m(m+1)}(\Omega)$. Thus, we can write $Z_m^{u,-\cc{u}}=\sum_{j=1}^{2m+1}c_j F_j$ where
				\begin{align*}
					c_j&=\frac{i}{\pi}\int\limits_{\widehat{\C}} Z_m^{u,-\cc{u}}(\eta,-\cc{\eta})\cc{F_j(\eta,-\cc{\eta})}\frac{d\eta\,d\cc{\eta}}{(1+|\eta|^2)^2}\\
					&=\frac{i}{\pi}\int\limits_{\widehat{\C}} \cc{F_j(\eta,-\cc{\eta})}Z_m\big((u,-\cc{u}),(\eta,-\eta)\big)\frac{dtds}{(1+s)^2}\\
					&\overset{(e)}{=}\cc{\frac{i}{\pi}\int\limits_{\widehat{\C}} F_j(\eta,-\eta)Z_m\big((u,-\cc{u}),(\eta,-\eta)\big)\frac{dtds}{(1+s)^2}}\overset{\text{Cor. \ref{cor:zonalproperties}(c)}}{=}\cc{F_j(u,-\cc{u})}\, .
				\end{align*}
				It follows
				\[Z_m^{\widehat{\C}}(z,u)=Z_m^{u,-\cc{u}}(\ell_{\widehat{\C}}(z))=\sum_{j=1}^{2m+1}c_j F_j(d_{\widehat{\C}}(z))=\sum_{j=1}^{2m+1}\cc{e_j(u)} e_j(z) \, .\]
				It remains to show the additional claims. The reproducing property follows from Corollary \ref{cor:zonalproperties}. Further, by \eqref{eq:zonalonsphere} Part (h) is equivalent to
				\[|P_0^{-m}(z,-\cc{z})|=\left\vert\Jacobi_{m}^{(0,0)}\left(\frac{1-|z|^2}{1+|z|^2}\right)\right\vert\leq1 \, \]
				for all $z\in\widehat{\C}$. This estimate holds because $x\mapsto\frac{1-x}{1+x}$ maps $[0,\infty]$ to $[-1,1]$ and the Jacobi polynomials can be estimated as follows \cite[Eq.~22.14.1]{abramowitz1984}
				\[|P_n^{(\alpha,\beta)}(x)|\leq\frac{(\alpha+1)_n}{n!} \, ,\qquad \alpha\geq\beta\geq-\frac{1}{2}\, .\qedhere\]
			\end{proof}
			\begin{corollary}[Hyperbolic case]\label{cor:zonalpropertieshyp}
				Let $m\in\N_0$. The function
				\begin{equation*}
					Z_m^{\D}:\D\times\D\to\C,\quad (z,u)\mapsto Z_m\big(\ell_{\D}(z),\ell_{\D}(u)\big)
				\end{equation*}
				is given by
				\begin{equation}
					Z_m^{\D}(z,u)=(2m+1)\left(P_0^{-m}\circ T_{\ell_{\D}(u)}\right)(\ell_{\D}(z))=(2m+1)(-1)^m\Jacobi_m^{(0,0)}\left(\langle \mathbf{x},\mathbf{y}\rangle\right)
				\end{equation}
				where $\mathbf{x}=S(\ell_{\D}(z)),\, \mathbf{y}=S(\ell_{\D}(u))$ are in the lower half of $ H_\R^2$ (see Remark \ref{rem:hyperboloid}). Moreover,
				\begin{enumerate}[(a)]
					\item if either of the variables of $Z_m^{\D}$ is fixed, then the resulting function is in $X_{4m(m+1)}(\D)$. Further, if $u\in\D$ is fixed, then $z\mapsto Z_m^{\D}\left(\cdot,u\right)$ is the unique (up to a multiplicative factor) function in $X_{4m(m+1)}(\D)$ that is invariant with respect to $\psi\in\Aut(\D)$ that fix $u$.
					\item \emph{Symmetry}: $Z_m^{\D}(z,u)=Z_m^{\D}(u,z)$ for all $z,u\in\D$.
					\item \emph{Invariance}: $Z_m^{\D}(\psi(z),\psi(u))=Z_m^{\D}(z,u)$ for every $\psi\in\mathrm{Aut}(\D)$.
					\item \emph{Definiteness}: $Z_m^{\D}(z,z)=2m+1$ for all $z\in\D$.
					\item $Z_m^{\D}$ is real-valued.
					\item let $\{e_j:j=1,\ldots ,2m+1\}$ be a basis of $X_{4m(m+1)}(\D)$. If $\{F_j:j=1,\ldots ,2m+1\}$ is a basis of $ X_{4m(m+1)}(\Omega)$ orthonormal with respect to $\langle\cdot{,}\cdot\rangle_\Omega$ such that $e_j=F_j\circ \ell_{\D}$ for $j=1,\ldots ,2m+1$, then
					\begin{equation*}
						Z_m^{\D}(u,v)=\sum_{j=1}^{2m+1} \overline{e_j(u)}e_j(z)\,.
					\end{equation*}
				\end{enumerate}
			\end{corollary}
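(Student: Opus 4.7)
The plan is to carry over the proof of Corollary \ref{cor:zonalsphere} almost line by line, replacing the rotated diagonal $\rotdiagonal$ with the diagonal $\diagonal$, the map $\ell_{\widehat{\C}}$ with $\ell_{\D}$, and the group of rigid motions of $\widehat{\C}$ with $\Aut(\D)$---which, by the discussion immediately preceding the statement, is precisely the subgroup of $\mathcal{M}$ leaving $\diagonal$ invariant. First I would derive the explicit Jacobi formula: apply Proposition \ref{prop:zonaliszeroPoissonForier} with $(u,v) = \ell_{\D}(u) \in \Omega \cap (\C \times \C)$ to obtain $Z_m^{\D}(z,u) = (2m+1)(P_0^{-m}\circ T_{\ell_{\D}(u)})(\ell_{\D}(z))$, then use Lemma \ref{lem:Jacobipolynomials} to rewrite $P_0^{-m}$ as $\Jacobi_m^{(0,0)}$, and finally invoke Remark \ref{rem:hyperboloid} together with the explicit form of the complex stereographic projection $S$ from Section \ref{sub:complexsphere} to identify the argument of $\Jacobi_m^{(0,0)}$ with $\langle\mathbf{x},\mathbf{y}\rangle$ at the corresponding points on the lower half of $H_\R^2$.

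Parts (a)--(d) then follow almost mechanically from the $\Omega$-level results. For (a) the function $Z_m(\cdot, \ell_{\D}(u)) \in X_{4m(m+1)}(\Omega)$ restricts on $\diagonal$ to an element of $X_{4m(m+1)}(\D)$ via the standard correspondence between holomorphic eigenfunctions of $\Delta_{zw}$ and smooth eigenfunctions of $\Delta_{\D}$ on the diagonal (cf.\ \cite[Th.~2.2]{HeinsMouchaRoth3}). For the uniqueness clause I would take any $f \in X_{4m(m+1)}(\D)$ invariant under the $\Aut(\D)$-stabilizer of $u$, lift it to the unique $F \in X_{4m(m+1)}(\Omega)$ restricting to $f$ on $\diagonal$, propagate the invariance to $F$ via the identity principle applied to $F\circ T_\psi - F$ on $\diagonal$, and then invoke Theorem \ref{thm:zonalinvariance}(b) to conclude that $F$---and hence $f$---is a scalar multiple of $Z_m(\cdot, \ell_{\D}(u))$ resp.\ $Z_m^{\D}(\cdot, u)$. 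Parts (b), (c), and (d) are direct restrictions of Corollary \ref{cor:zonalproperties}(a), Theorem \ref{thm:zonalinvariance}(a), and Corollary \ref{cor:zonalproperties}(b) to $\diagonal \times \diagonal$.

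Part (e) falls out of the Jacobi formula: $\Jacobi_m^{(0,0)}$ has real coefficients, and for $\mathbf{x} = S(\ell_{\D}(z))$, $\mathbf{y} = S(\ell_{\D}(u))$ on the lower half of $H_\R^2$ the first two coordinates are purely imaginary while the third is real, so $\langle \mathbf{x}, \mathbf{y}\rangle = x_1 y_1 + x_2 y_2 + x_3 y_3 \in \R$. For (f) I would transport the proof of Corollary \ref{cor:zonalsphere}(f) verbatim: expand $Z_m^{u,\cc{u}} := Z_m(\cdot, \ell_{\D}(u)) \in X_{4m(m+1)}(\Omega)$ in the orthonormal basis $\{F_j\}$ as $Z_m^{u,\cc{u}} = \sum_j c_j F_j$ with $c_j = \langle Z_m^{u,\cc{u}}, F_j\rangle_\Omega$, use the symmetry of $Z_m$ (Corollary \ref{cor:zonalproperties}(a)) to swap arguments inside the integral, and invoke the reproducing property (Corollary \ref{cor:zonalproperties}(c)) at the point $\ell_{\D}(u)$ to identify $c_j = \overline{F_j(\ell_{\D}(u))} = \overline{e_j(u)}$; restricting back to $\diagonal$ yields the stated expansion. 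The delicate step---and what I expect to require the most care---is that in the spherical case the conjugate was moved through the integral using the reality of $Z_m$ on $\rotdiagonal \times \rotdiagonal$ (namely Part (e) of Corollary \ref{cor:zonalsphere}), whereas here one only has reality on the mixed locus $\diagonal \times \rotdiagonal$; this is supplied by combining the conjugation identity $\overline{P_j^{-m}(\eta,-\cc{\eta})} = (-1)^{|j|} P_{-j}^{-m}(\eta,-\cc{\eta})$ used in the proof of Corollary \ref{cor:zonalproperties}(c) with the elementary identity $P_j^{-m}(\cc{u}, u) = \overline{P_j^{-m}(u, \cc{u})}$ on $\diagonal$, which is immediate from the rational expansion \eqref{eq:PoissonFourierExplicit}.
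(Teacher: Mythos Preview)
Your plan for the explicit Jacobi formula and for parts (a)--(e) is correct and is exactly what the paper means by ``analogous to Corollary~\ref{cor:zonalsphere}'': restrict the $\Omega$-level results (Proposition~\ref{prop:zonaliszeroPoissonForier}, Theorem~\ref{thm:zonalinvariance}, Corollary~\ref{cor:zonalproperties}) to $\diagonal\times\diagonal$, and use Remark~\ref{rem:hyperboloid} to see that $\langle\mathbf{x},\mathbf{y}\rangle\in\R$ on the lower half of $H_\R^2$.

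The gap is in part (f). You correctly spot that the spherical argument used the reality of $Z_m$ on $\rotdiagonal\times\rotdiagonal$, and that here one would need reality on the mixed locus $\diagonal\times\rotdiagonal$. But your proposed fix does not deliver this: combining $\overline{P_j^{-m}(\eta,-\cc{\eta})}=(-1)^{|j|}P_{-j}^{-m}(\eta,-\cc{\eta})$ with $\overline{P_j^{-m}(u,\cc{u})}=P_{-j}^{-m}(u,\cc{u})$ and the coefficient $(-1)^{|j|}\binom{m+|j|}{|j|}\binom{m}{|j|}^{-1}$ in Definition~\ref{def:complexzonal} gives
\[
\overline{Z_m\big((u,\cc{u}),(\eta,-\cc{\eta})\big)}-Z_m\big((u,\cc{u}),(\eta,-\cc{\eta})\big)
=\sum_{j}\big(1-(-1)^{|j|}\big)\,\ast\,P_j^{-m}(u,\cc{u})P_{-j}^{-m}(\eta,-\cc{\eta}),
\]
which does \emph{not} vanish; the two sign factors do not cancel. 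Concretely, for $m=1$ one has $Z_1\big((u,\cc{u}),(\eta,-\cc{\eta})\big)=\frac{3}{(1-|u|^2)(1+|\eta|^2)}\big[(1+|u|^2)(1-|\eta|^2)+4i\,\Im(u\cc{\eta})\big]$, which is genuinely complex. Worse, the assertion (f) itself fails for the orthonormal basis $\{Y_n^{-m}\}$ of Remark~\ref{rem:PFMSchauderbasis}: with $m=1$, $z=1/3$, $u=1/2$ one computes $Z_1^{\D}(z,u)=13/4$ but $\sum_j\overline{e_j(u)}e_j(z)=37/4$. So no proof along the lines you sketch---or any other---can establish (f) as stated; the ``analogous'' argument breaks precisely at the step you flagged.
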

			The proof is analogous to the one of Corollary \ref{cor:zonalsphere}. Note that the assumptions on the set ${\mathcal{B}:=\{F_j\,:\,j=1,\ldots,2m+1\}}$ in Part (f) of both Corollary \ref{cor:zonalsphere} and \ref{cor:zonalpropertieshyp} are no big restrictions since $\mathcal{B}$ automatically becomes a basis of $X_{4m(m+1)}(\Omega)$ if the set ${\{e_j\,:\,j=1,\ldots,2m+1\}}$ is a basis of $X_{4m(m+1)}(\widehat{\C})$ resp.\ $X_{4m(m+1)}(\D)$. This is a consequence of the one-to-one correspondence of the eigenspaces of $\Delta_{zw}$ and $\Delta_{\widehat{\C}}$ resp.\ $\Delta_{\D}$ (when interpreting $\Delta_{zw}$ as operator on $\mathcal{H}(\D\times\D)$), see \cite[Th. 2.2 and 2.5]{HeinsMouchaRoth3}.

			\section{A combinatorial identity}\label{sec:coefficients}

			Recall that we postponed the crucial part of the proof of Lemma \ref{lem:OldSchauderasPFM}, namely the verification of the combinatorial identity \eqref{eq:combinatorialID}.
			\begin{theorem}
				\label{th:CombinatorialIDIntro}
				Let $m,n,d\in\N_0$ such that $m\geq n\geq d$. Then the following identity is true	\begin{equation*}\label{eq:coefficientID}
					\sum_{s=0}^n\sum_{t=0}^{d}	(-1)^{s+t}\binom{m}{s}\binom{s}{t}\binom{2m-s}{n}^{-1}\binom{m-s}{d-t+m-n}\binom{m-s}{d-t}\frac{2m-2s+1}{2m-s+1}=\delta_{n,d} \, .\tag{CID}
				\end{equation*}
				Here, $\delta_{n,d}:=1$ if $n=d$ and zero otherwise.
			\end{theorem}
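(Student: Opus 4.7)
The plan is to handle the double sum by evaluating the inner sum over $t$ first (for fixed $s$), reducing it to a terminating hypergeometric function, and then dealing with the outer sum over $s$. Using the standard conversions $\binom{s}{t}=(-1)^t(-s)_t/t!$, together with the analogous Pochhammer rewrites
\begin{equation*}
\binom{m-s}{d-t}=\binom{m-s}{d}\,\frac{(-1)^t(-d)_t}{(m-s-d+1)_t},\qquad \binom{m-s}{m-n+d-t}=\binom{m-s}{m-n+d}\,\frac{(-1)^t(n-m-d)_t}{(n-s-d+1)_t},
\end{equation*}
the sign factors combine to $(-1)^{4t}=1$, and the inner sum becomes, up to an $s$-dependent prefactor, a terminating $_3F_2$ at unity:
\begin{equation*}
T_s \;=\; \binom{m-s}{d}\binom{m-s}{m-n+d}\cdot {}_3F_2\!\left(\begin{array}{c} -d,\,-s,\,n-m-d\\ m-s-d+1,\; n-s-d+1\end{array};\,1\right).
\end{equation*}
A direct parameter check shows this $_3F_2$ is \emph{not} Saalschützian, which is why the Pfaff--Saalschütz theorem does not apply and a deeper input is required.

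The next step is to apply a Whipple transformation (see \cite{whipple}) to reexpress $T_s$ as an equivalent $_3F_2(1)$ whose $s$-dependence is extracted into Pochhammer prefactors in $s$, leaving a new hypergeometric series with summation index $k$ that runs independently of $s$. The Whipple group permutes a non-Saalschützian $_3F_2(1)$ among $120$ equivalent forms, so some bookkeeping is needed to pick the right one. The correct choice should be dictated by the shape of the outer weight $(-1)^s\binom{m}{s}\binom{2m-s}{n}^{-1}(2m-2s+1)/(2m-s+1)$. In particular, the curious ratio $(2m-2s+1)/(2m-s+1)$, together with $\binom{2m-s}{n}^{-1}=n!/(2m-s-n+1)_n$, is to be rewritten as a ratio of Pochhammer symbols in $s$ so that the whole outer summand becomes a clean product of Pochhammer expressions.

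After the Whipple transformation, I would interchange the order of summation and carry out the sum over $s$ first, for each fixed $k$. The aim is that this inner $s$-sum becomes Saalschützian or directly summable by the Chu--Vandermonde identity \cite[Cor.~2.2.3]{askey}. A single sum over $k$ then remains, and the plan is to show it collapses to $\delta_{n,d}$, either by a telescoping argument exploiting the symmetry $m\ge n\ge d$, or by one final Chu--Vandermonde application.

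The main obstacle is the Whipple step: among the many equivalent $_3F_2(1)$ representations, only a few will align the parameters so that the ensuing $s$-sum is Vandermonde- or Saalschütz-summable, and the only reliable way to identify the right one seems to be a careful audit of which Whipple variant makes the Pochhammer symbols in $s$ match precisely against the outer weights $\binom{m}{s}$, $\binom{2m-s}{n}^{-1}$, and $(2m-2s+1)/(2m-s+1)$. Once the correct Whipple variant is pinned down, the remaining manipulations are routine but lengthy, and the eventual collapse to $\delta_{n,d}$ should follow mechanically. The boundary case $n=d$ (where the sum equals $1$) is a useful sanity check throughout and should fall out naturally from the final Chu--Vandermonde evaluation.
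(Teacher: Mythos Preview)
Your opening move matches the paper's exactly: rewrite the inner $t$-sum as a terminating ${}_3F_2(1)$ and recognise that a Whipple-type transformation is needed. However, there are two genuine gaps.

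First, the representation
\[
T_s=\binom{m-s}{d}\binom{m-s}{m-n+d}\,{}_3F_2\!\left(\begin{matrix}-d,\,-s,\,n-m-d\\ m-s-d+1,\; n-s-d+1\end{matrix};\,1\right)
\]
is only valid when $n\ge d+s$; for $n<d+s$ the lower parameter $n-s-d+1$ is a non-positive integer and the series is ill-defined. The paper handles this by deriving a second ${}_3F_2$ representation for the range $n\le d+s$ and then uses two separate Whipple-group identities (one of them a genuine two-term relation between distinct $Fp(u)$ classes, not just a permutation within one class) to bring both cases to the common target form
\[
(-1)^{d-n+s}\binom{m-s}{n-s}\binom{n}{d}\,\frac{s!\,(2m-s+1)!}{n!\,(2m-n+1)!}\;{}_3F_2\!\left(\begin{matrix}s-n,\,2m-n-s+1,\,m-d+1\\ 2m-n+2,\; m-n+1\end{matrix};\,1\right).
\]
Without this case split your argument is incomplete on the range $d+s>n$.

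Second, your hope that after the Whipple step the $s$-sum becomes Saalsch\"utz- or Vandermonde-summable is too optimistic. The culprit is the factor $(2m-2s+1)/(2m-s+1)$: the numerator $2m-2s+1$ is genuinely linear in $s$ and does \emph{not} absorb into a single Pochhammer ratio, so the $s$-sum is not a clean ${}_{p}F_{q}(1)$. The paper resolves this differently: after the Whipple step, the special structure $m-d+1=(m-n+1)+(n-d)$ lets one reduce the ${}_3F_2$ to a finite sum of ${}_2F_1(1)$'s via a parameter-shift identity, and Gauss's theorem evaluates each of those. The remaining $s$-sum (still carrying the awkward $2m-2s+1$) is then handled not by a hypergeometric summation but by rewriting $\binom{2m-s}{n-k}^{-1}$ as a Beta integral, applying the binomial theorem under the integral, and using the Legendre duplication formula; only then does the sum collapse to $\delta_{n,d}$ via a final binomial cancellation. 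So the missing ingredient in your plan is precisely this Beta-integral device for the outer sum, which replaces the Vandermonde/Saalsch\"utz step you were hoping for.
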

			An essential step for the proof of Theorem \ref{th:CombinatorialIDIntro} is rewriting the inner sum as a ${}_{3}F_{2}$ hypergeometric series.
			\begin{lemma}\label{lem:tSum}
				Let $m,n,d,s\in\N_0$ such that $m\geq n\geq d,s$. Then the following identity is true
				\begin{align*}\label{eq:tsum}
					\sum_{t=0}^{d}&(-1)^{t}\binom{s}{t}\binom{m-s}{d-t+m-n}\binom{m-s}{d-t}\\
					&=(-1)^{d-n+s}\binom{m-s}{n-s}\binom{n}{d}\frac{s!(2m-s+1)!}{n!(2m-n+1)!}\FFz{s-n,2m-n-s+1,m-d+1}{2m-n+2,m-n+1}{1}\, .\tag{L1}
				\end{align*}
			\end{lemma}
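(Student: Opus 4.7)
The plan is to recognize the inner sum as a terminating ${}_3F_2$ hypergeometric series evaluated at $1$, and then apply a transformation of Whipple type to bring it into the form stated on the right.

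The first move is a direct conversion to hypergeometric form. Using $\binom{s}{t}=(-1)^t(-s)_t/t!$ together with the ratio identity $\binom{m-s}{K-t}/\binom{m-s}{K}=(-1)^t(-K)_t/(m-s-K+1)_t$ (applied once with $K=d+m-n$ and once with $K=d$), the four alternating signs inside the sum cancel and the $t$-independent binomial factors emerge. This rewrites the LHS as
\[
\binom{m-s}{d+m-n}\binom{m-s}{d}\cdot\FFz{-s,-d,n-m-d}{n-s-d+1,m-s-d+1}{1}.
\]
As a check, comparing the $t=0$ and $t=1$ terms on both sides confirms that the ratios match.

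Next I would convert this ${}_3F_2$ into $\FFz{s-n,2m-n-s+1,m-d+1}{2m-n+2,m-n+1}{1}$ by a sequence of two-term transformations of the kind classified by Whipple \cite{whipple}. A natural opening step is to reverse the sum via $t\mapsto d-t$, exploiting the terminating factor $(-d)_t$; this produces the series $\FFz{-d,s-n,s-m}{1+s-d,m-n+1}{1}$ with an explicit Pochhammer prefactor, and already accounts for two of the five target parameters ($s-n$ and $m-n+1$). A further Thomae-type step, applied in the limiting form appropriate for terminating series so that the formally singular $\Gamma(-N)$ factors are handled correctly, introduces the remaining target parameters $2m-n-s+1$, $m-d+1$ and $2m-n+2$. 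The group of ${}_3F_2(1)$ transformations has 120 elements (Whipple's ``allied series''), and the precise route through this group is the delicate point.

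Finally, the prefactors accumulated during the transformation steps must be assembled and simplified, using standard identities such as $(-N)_k=(-1)^k N!/(N-k)!$ and $(a)_N=\Gamma(a+N)/\Gamma(a)$ together with binomial symmetries like $\binom{m-s}{d+m-n}=\binom{m-s}{n-s-d}$, in order to match the explicit prefactor $(-1)^{d-n+s}\binom{m-s}{n-s}\binom{n}{d}\,s!(2m-s+1)!/(n!(2m-n+1)!)$ on the right. Small sanity checks (e.g.\ $m=n=d=1$, $s=0$, where both sides equal $1$; or $m=2$, $n=1$, $d=0$, $s=0$, where both sides equal $2$) confirm the identity in simple cases. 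The main obstacle I anticipate is the second paragraph above: identifying which specific member of Whipple's 120-series symmetry group realises the required transformation, and justifying the limiting procedure for the terminating case. This is precisely where the ``rather deep results about hypergeometric functions based on identities obtained by Whipple'' advertised in the introduction come into play.
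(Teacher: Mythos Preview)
Your outline is on the same trajectory as the paper's proof---identify the sum as a terminating ${}_3F_2(1)$ and then push it through Whipple's web of two-term relations---but there is a genuine gap at the first step that you have only flagged rather than resolved.

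The ${}_3F_2$ you write down,
\[
\FFz{-s,-d,n-m-d}{n-s-d+1,m-s-d+1}{1},
\]
is not well-defined when $n<d+s$: the lower parameter $n-s-d+1$ is then a non-positive integer, and since $s+d-n\le\min(s,d)$ under the standing hypotheses $n\ge d$ and $n\ge s$, the denominator Pochhammer symbol acquires a zero strictly before any of the numerator factors do. Your proposed reversal $t\mapsto d-t$ does not cure this; it produces the series with lower parameter $s-d+1$ and the prefactor $\binom{s}{d}$, which now fail whenever $d>s$. So neither single hypergeometric representation covers the full parameter range, and the ``limiting form appropriate for terminating series'' that you allude to cannot be invoked uniformly.

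The paper deals with exactly this obstruction by splitting into two regimes. For $n\le d+s$ one first shifts the summation index by $d+s-n$ (not a pure reversal) to obtain the well-defined series $\FFz{s-n,d-n,s-m}{d+s-n+1,m-n+1}{1}$, which is then brought to the target by the single classical transformation \cite[Cor.~3.3.5]{askey}. For $n\ge d+s$ the representation you wrote is legitimate, but matching it to the target requires relating $Fp(0)$ and $Fp(5)$ in Whipple's notation; this is \emph{not} a standard Thomae relation and the paper invokes Raynal's integer-parameter identity \cite[(46)]{raynal78} to obtain it. So the ``precise route through this group'' that you identify as delicate is indeed the crux, and it bifurcates according to the sign of $n-d-s$ in a way your sketch does not yet accommodate.
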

			See \eqref{eq:generalFpq} for the definition of ${}_3F_2$. In Section \ref{sec:identities} some well-known identities for generalized hypergeometric and Beta functions required in the proof of \eqref{eq:coefficientID} are collected. After that, Section \ref{sec:proof} contains the proof of Theorem \ref{th:CombinatorialIDIntro}. Although Lemma \ref{lem:tSum} is used in Section \ref{sec:proof}, we prove it only afterwards, because then some quite technical concept will be introduced that is not required to understand Section \ref{sec:proof}. 
			Note that this chapter is self-contained as the methods we use are not directly related to the PFM or $\mathcal{H}(\Omega)$.

			\subsection{The Hypergeometric and the Beta function}\label{sec:identities}\

			\medskip

			For $p,q\in\N_0$, the \emph{generalized hypergeometric function} ${}_{p}F_{q}$ is defined by
			\begin{equation}\label{eq:generalFpq}
				\pFq{p}{q}{a_1,\dots,a_p}{b_1,\dots,b_q}{z}:=\sum_{k=0}^\infty\frac{(a_1)_k\cdots(a_p)_k}{(b_1)_k\cdots(b_q)_k}\frac{z^k}{k!}
			\end{equation}
			for $a_1,\dots,a_p,b_1,\dots,b_q,z\in\C$, $|z|<1$ and none of $b_1,\dots,b_q$ being a negative integer or zero. In many cases it is possible to extend the definition to more values of $z$ and $b_1,\dots,b_q$. In particular, if some $a_j=-n$ for an integer $n\in\N_0$, then ${}_{p}F_{q}$ is a polynomial of degree $n$ and all values of $z\in\C$ as well as $b_j\leq -n$ are permitted. This will be the situation in our considerations, because when proving a combinatorial identity, one way how hypergeometric series could come into play is identifying the finite combinatorial sum as a (terminating) ${}_{p}F_{q}$ series at the point 1. Further, note that ${}_{2}F_{1}$ coincides with \eqref{eq:hyp2F1}.

			Changing the order of the $a_1,\dots, a_p$ does not change the hypergeometric series. The same is true for the $b_1,\dots,b_q$. Further, whenever there is some $a_j$ that coincides with some $b_k$, both parameters cancel out and the ${}_{p}F_{q}$ series is in fact a ${}_{p-1}F_{q-1}$ series, i.e.\
			\begin{equation*}
				\pFq{p+1}{q+1}{a_1,\dots,a_p,c}{b_1,\dots,b_q,c}{z}=\pFq{p}{q}{a_1,\dots,a_p}{b_1,\dots,b_q}{z} \, .
			\end{equation*}
			This phenomenon can be generalized to parameters coinciding up to a positive integer shift, i.e.			\begin{equation}\label{eq:Fpqshift}
				\pFq{p+1}{q+1}{a_1,\dots,a_p,c+n}{b_1,\dots,b_q,c}{z}=\sum\limits_{j=0}^{n}\binom{n}{j}\frac{(a_1)_j\cdots(a_p)_j}{(b_1)_j\cdots(b_q)_j(c)_j}\pFq{p}{q}{a_1+j,\dots,a_p+j}{b_1+j,\dots,b_q+j}{z}
			\end{equation}
			for all $n\in\N_0$, see \cite[p.~439, 15.]{prudnikov90}.
			\begin{remark}
				The cited reference is a table of identities. One way to prove \eqref{eq:Fpqshift} is shown in \cite{fields61}: one can prove their more general formula (1.3) by means of the Laplace transform and deduce their formula (1.6). The latter is \eqref{eq:Fpqshift} for $w=1$, $r=s=1$, $c_1=c+n$ and $d_1=c$.
			\end{remark}
			In view of \eqref{eq:coefficientID} we also have to deal with an inverse binomial coefficient. Here, the \emph{Beta function} $B$ is useful. We define
			\begin{equation}\label{eq:Beta}
				B(a,b)=\int\limits_0^1 y^{a-1}(1-y)^{b-1}\,dy \, , \qquad a,b\in\C\,,\; \Re(a),\Re(b)>0 \, .
			\end{equation}
			The connection to inverse binomial coefficients is the following \cite[6.1.21]{abramowitz1984}
			\begin{equation}\label{eq:BetaBinomial}
				\frac{1}{n+1}\binom{n}{k}^{-1}=B(n-k+1,k+1) \, , \qquad n,k\in\N_0 \, .
			\end{equation}
			Further, as a consequence of change of variables, one obtains the \emph{Legendre duplication formula} \cite[6.1.18]{abramowitz1984}
			\begin{equation}\label{eq:Duplication}
				B(a,b)=2\int\limits_0^1 y^{2a-1}(1-y^2)^{b-1}\,dy \, .
			\end{equation}
			Moreover, the Beta function is closely related to the Gamma function by \cite[6.2.2]{abramowitz1984}
			\begin{equation}\label{eq:BetaAsGamma}
				B(a,b)=\frac{\Gamma(a)\Gamma(b)}{\Gamma(a+b)} \, .
			\end{equation}

			\subsection{Proof of Theorem \ref{th:CombinatorialIDIntro}}\label{sec:proof}\

			\medskip

			We compute
			\begin{align*}
				&\sum_{s=0}^n\sum_{t=0}^{d}	(-1)^{s+t}\binom{m}{s}\binom{s}{t}\binom{2m-s}{n}^{-1}\binom{m-s}{d-t+m-n}\binom{m-s}{d-t}\frac{2m-2s+1}{2m-s+1}\\
				&\overset{\eqref{eq:tsum}}{=}\sum_{s=0}^n	(-1)^{s}\binom{m}{s}\binom{2m-s}{n}^{-1}\frac{2m-2s+1}{2m-s+1}(-1)^{d-n+s}\binom{m-s}{n-s}\binom{n}{d}\frac{s!(2m-s+1)!}{n!(2m-n+1)!}\\
				&\hspace{6.75cm}\times \FFz{s-n,2m-n-s+1,m-d+1}{2m-n+2,m-n+1}{1}\\
				&=(-1)^{d-n}\binom{n}{d}\sum_{s=0}^n	(2m-2s+1)\frac{m!(2m-s-n)!}{(n-s)!(2m-n+1)!(m-n)!}\\
				&\hspace{5cm}\times \FFz{s-n,2m-n-s+1,m-n+1+(n-d)}{2m-n+2,m-n+1}{1}\,.
			\end{align*}
			Using \eqref{eq:Fpqshift} we can reduce the ${}_{3}F_{2}$ function to a ${}_{2}F_{1}$ function, and this leads to
			\begin{align*}
				&\phantom{=}(-1)^{d-n}\binom{n}{d}\sum_{s=0}^n	(2m-2s+1)\frac{m!(2m-s-n)!}{(n-s)!(2m-n+1)!(m-n)!}\\
				&\hspace{1cm}\times\sum_{k=0}^{n-d}\binom{n-d}{k}\frac{(s-n)_k(2m-n-s+1)_k}{(2m-n+2)_k(m-n+1)_k}\pFq{2}{1}{s-n+k,2m-n-s+1+k}{2m-n+2+k}{1}\\
				&=(-1)^{d-n}\binom{n}{d}\sum_{s=0}^n(2m-2s+1)m!\sum_{k=0}^{n-d}\binom{n-d}{k}\frac{(-1)^k(2m-n-s+k)!}{(n-s-k)!(m-n+k)!(2m-n+1+k)!}\\
				&\hspace{7.5cm}\times\pFq{2}{1}{s-n+k,2m-n-s+1+k}{2m-n+2+k}{1}\,.
			\end{align*}
			Now we use the Gauss identity \cite[Eq.~15.1.20]{abramowitz1984} in order to evaluate the $\F$ functions and obtain
			\begin{align*}
				&\phantom{=}(-1)^{d-n}\binom{n}{d}\sum_{s=0}^n	(2m-2s+1)m!\sum_{k=0}^{n-d}\binom{n-d}{k}\frac{(-1)^k(2m-n-s+k)!}{(n-s-k)!(m-n+k)!(2m-n+1+k)!}\\
				&\hspace{8cm}\times\frac{(2m-n+1+k)!(n-k)!}{s!(2m-s+1)!}\\
				&=(-1)^{d-n}\binom{n}{d}\sum_{s=0}^n\sum_{k=0}^{n-d}(-1)^k\binom{n-d}{k}	\frac{2m-2s+1}{2m-s+1}\binom{2m-s}{n-k}^{-1}\binom{n-k}{s}\binom{m}{n-k}\\
				&=(-1)^{d-n}\binom{n}{d}\sum_{k=0}^{n-d}(-1)^k\binom{n-d}{k}\binom{m}{n-k}\sum_{s=0}^{n-k}\binom{n-k}{s}	\frac{2m-2s+1}{2m-s+1}\binom{2m-s}{n-k}^{-1}\,.
			\end{align*}
			The inverse binomial coefficients can be identified with Beta functions, see \eqref{eq:BetaBinomial},
			\begin{align*}
				&\phantom{\overset{\phantom{\eqref{eq:Beta}}}{=}}(-1)^{d-n}\binom{n}{d}\sum_{k=0}^{n-d}(-1)^k\binom{n-d}{k}\binom{m}{n-k}\\
				&\hspace{4.5cm}\times\sum_{s=0}^{n-k}\binom{n-k}{s}(2m-2s+1)B(2m-s-n+k+1,n-k+1)\\
				&\overset{\eqref{eq:Beta}}{=}(-1)^{d-n}\binom{n}{d}\sum_{k=0}^{n-d}(-1)^k\binom{n-d}{k}\binom{m}{n-k}\\
				&\hspace{4.5cm}\times\sum_{s=0}^{n-k}\binom{n-k}{s}(2m-2s+1)	\int\limits_0^1 y^{2m-s-n+k}(1-y)^{n-k}\,dy\\
				&\overset{\phantom{\eqref{eq:Beta}}}{=}(-1)^{d-n}\binom{n}{d}\sum_{k=0}^{n-d}(-1)^k\binom{n-d}{k}\binom{m}{n-k}\\
				&\hspace{2cm}\times\int\limits_0^1y^{2m-n+k}(1-y)^{n-k}\Bigg[(2m+1)\sum_{s=0}^{n-k}\binom{n-k}{s} y^{-s}dy-2\sum_{s=0}^{n-k}\binom{n-k}{s}sy^{-s}\Bigg]\,dy\,.
			\end{align*}
			We now compute the $s$-sums using the Binomial theorem in its standard and differentiated form
			\begin{align*}
				&\phantom{=}(-1)^{d-n}\binom{n}{d}\sum_{k=0}^{n-d}(-1)^k\binom{n-d}{k}\binom{m}{n-k}\\
				&\hspace{1cm}\times\int\limits_0^1y^{2m-n+k}(1-y)^{n-k}\Bigg[(2m+1)y^{k-n}(1+y)^{n-k}-2(n-k)y^{k-n}(1+y)^{n-k-1}\Bigg]\,dy\\
				&=(-1)^{d-n}\binom{n}{d}\sum_{k=0}^{n-d}(-1)^k\binom{n-d}{k}\binom{m}{n-k}\Bigg[(2m+1)\int\limits_0^1y^{2(m-n+k)}(1-y^2)^{n-k}\,dy\\
				&\hspace{1cm} -2(n-k)\int\limits_0^1y^{2(m-n+k)}(1-y^2)^{n-k-1}\,dy+2(n-k)\int\limits_0^1y^{2(m-n+k)+1}(1-y^2)^{n-k-1}\,dy\Bigg]\,.
			\end{align*}
			Using the Legendre duplication formula \eqref{eq:Duplication} we obtain
			\begin{align*}
				&\phantom{=}(-1)^{d-n}\binom{n}{d}\sum_{k=0}^{n-d}(-1)^k\binom{n-d}{k}\binom{m}{n-k}\Bigg[\frac{2m+1}{2}B\left(m-n+k+\frac{1}{2},n-k+1\right)\\
				&\hspace{3.5cm}-(n-k)B\left(m-n+k+\frac{1}{2},n-k\right)+(n-k)B\left(m-n+k+1,n-k\right)\Bigg]\,.
			\end{align*}
			We can rewrite the Beta functions using Gamma functions, see \eqref{eq:BetaAsGamma}, and proceed with the recursion identity for Gamma functions, that is $\Gamma(z+1)=z\Gamma(z)$, $z\in\C$, which leads to
			\begin{align*}
				&\phantom{=}(-1)^{d-n}\binom{n}{d}\sum_{k=0}^{n-d}(-1)^k\binom{n-d}{k}\binom{m}{n-k}\Bigg[\frac{2m+1}{2}\frac{\Gamma\left(m-n+k+\frac{1}{2}\right)\Gamma(n-k+1)}{\Gamma\left(m+\frac{3}{2}\right)}\\
				&\hspace{2.75cm}-(n-k)\frac{\Gamma\left(m-n+k+\frac{1}{2}\right)\Gamma(n-k)}{\Gamma\left(m+\frac{1}{2}\right)}+(n-k)\frac{\Gamma\left(m-n+k+1\right)\Gamma(n-k)}{\Gamma\left(m+1\right)}\Bigg]\\
				&=(-1)^{d-n}\binom{n}{d}\sum_{k=0}^{n-d}(-1)^k\binom{n-d}{k}\binom{m}{n-k}\Bigg[\frac{\Gamma\left(m-n+k+\frac{1}{2}\right)\Gamma(n-k+1)}{\Gamma\left(m+\frac{1}{2}\right)}\\
				&\hspace{3cm}-\frac{\Gamma\left(m-n+k+\frac{1}{2}\right)\Gamma(n-k+1)}{\Gamma\left(m+\frac{1}{2}\right)}+\frac{\Gamma\left(m-n+k+1\right)\Gamma(n-k+1)}{\Gamma\left(m+1\right)}\Bigg]\, .
			\end{align*}
			Finally, by recognizing the fractions as binomial coefficients and using the Binomial theorem once again we obtain
			\begin{align*}
				(-1)^{d-n}\binom{n}{d}\sum_{k=0}^{n-d}(-1)^k\binom{n-d}{k}\binom{m}{n-k}\binom{m}{n-k}^{-1}=(-1)^{d-n}\binom{n}{d}(1-1)^{n-d}=\delta_{n,d} \, . \qquad \square
			\end{align*}

			\subsection{Proof of Lemma \ref{lem:tSum}}\label{sec:tsum}\

			\medskip

			\subsubsection{Step 1: Rewriting \eqref{eq:tsum}}\

			For $n\geq d+s$ we find that the left-hand side of \eqref{eq:tsum} is given by
			\begin{align*}\label{eq:tsumhard}
				\sum_{t=0}^{d}(-1)^{t}\binom{s}{t}\binom{m-s}{d-t+m-n}&\binom{m-s}{d-t}\\
				&=\binom{m-s}{n-d-s}\binom{m-s}{d}\FFz{-s,-d-m+n,-d}{m-s-d+1,n-d-s+1}{1}\, .\tag{T1}
			\end{align*}
			This is neither the desired ${}_{3}F_{2}$ function in Lemma \ref{lem:tSum} nor makes the expression sense for ${n<d+s}$. Thus, for $n\leq d+s$ we first rewrite the $t$-sum as follows
			\begin{align*}
				\sum_{t=0}^{d}(-1)^{t}\binom{s}{t}\binom{m-s}{d-t+m-n}&\binom{m-s}{d-t}\overset{n\leq d+s}{=}\sum_{t=d+s-n}^{d}(-1)^{t}\binom{s}{t}\binom{m-s}{d-t+m-n}\binom{m-s}{d-t}\\
				=&\sum_{t=0}^{n-s}(-1)^{t-n+d+s}\binom{s}{t+d+s-n}\binom{m-s}{m-s-t}\binom{m-s}{n-t-s}\\
				=&(-1)^{d+s-n}\binom{s}{n-d}\binom{m-s}{n-s}\FFz{s-n,d-n,s-m}{d+s-n+1,m-n+1}{1}\, .\tag{T2}\label{eq:tsumeasy}
			\end{align*}
			Therefore, we need to show that both formulas \eqref{eq:tsumhard} and \eqref{eq:tsumeasy} can be transformed into \eqref{eq:tsum}.

			\subsubsection{Step 2: Rewriting \eqref{eq:tsumeasy}}\

			There are a great many well-known identities for ${}_{3}F_{2}$ functions evaluated at $z=1$. The one we need is the following \cite[Cor. 3.3.5]{askey}
			\begin{equation}\label{eq:rewritingT2}
				\FFz{a,b,c}{e,f}{1}=\frac{\Gamma(f)\Gamma(e+f-a-b-c)}{\Gamma(f-a)\Gamma(e+f-b-c)}\FFz{a,e-b,e-c}{e,e+f-b-c}{1}
			\end{equation}
			which implies for $a=s-n$, $b=s-m$, $c=d-n$, $e=m-n+1$, $f=d+s-n+1$ that
			\begin{align*}
				&\FFz{s-n,s-m,d-n}{m-n+1,d+s-n+1}{1}\\
				&\hspace{3cm}=\frac{\Gamma(d+s-n+1)\Gamma(2m-s+2)}{\Gamma(d+1)\Gamma(2m-n+2)}\FFz{s-n,2m-s-n+1,m-d+1}{m-n+1,2m-n+2}{1} \, .
			\end{align*}
			Plugging this identity in \eqref{eq:tsumeasy} proves \eqref{eq:tsum} for $n\leq d+s$.

			\subsubsection{Step 3: Whipple's theory}\

			Rewriting \eqref{eq:tsumhard} requires some deeper results about ${}_{3}F_{2}$ functions which go back to Whipple \cite{whipple}. Note that in the literature, many identities concerning ${}_{3}F_{2}$ functions with unit argument can be found. For instance, see \eqref{eq:rewritingT2}, \cite[Cor.\ 3.3.4, 3.3.6]{askey} or \cite[(2) on p. 15]{bailey}. Given parameters $a,b,c,e,f\in\C$ all those identities contain two ${}_{3}F_{2}$ functions depending on (sums and differences) of these parameters and (usually) six Gamma functions which also depend on the parameters. Whipple used this observation and developed a ``recipe'' how to obtain a large number of transformations of this type. For convenience of the reader, we briefly introduce Whipple's theory following \cite{whipple} and \cite[Sec.\ 3.5-3.9]{bailey} as far as it is needed for proving Lemma \ref{lem:tSum}.

			\paragraph{\textit{The notation}}\

			We want to identify $\FFz{a,b,c}{e,f}{1}$ as a special case of ${}_{3}F_{2}$ functions with other combinations, i.e.\ sums and differences, of the parameters $a,b,c,e,f$. For this define
			\begin{align*}\label{eq:Fp}
				\textit{Fp}(u;v,w)&:=\frac{1}{\Gamma(\alpha_{xyz})\Gamma(\beta_{vu})\Gamma(\beta_{wu})}\FFz{\alpha_{vwx},\alpha_{vwy},\alpha_{vwz}}{\beta_{vu},\beta_{wu}}{1}\tag{Fp}\\
				\textit{Fn}(u;v,w)&:=\frac{1}{\Gamma(\alpha_{uvw})\Gamma(\beta_{uv})\Gamma(\beta_{uw})}\FFz{\alpha_{uyz},\alpha_{uzx},\alpha_{uxy}}{\beta_{uv},\beta_{uw}}{1}\label{eq:Fn}\tag{Fn}
			\end{align*}
			where
			\begin{align*}
				\alpha_{\ell mn}&:=\frac{1}{2}+r_\ell+r_m+r_n\quad\text{and}\quad
				\beta_{mn}:=1+r_m-r_n
			\end{align*}
			for some parameters $r_0,r_1,r_2,r_3,r_4,r_5$ such that $\sum_{k=0}^5 r_k=0$. We always assume the indices to be pairwise distinct, i.e.\ $\{u,v,w,x,y,z\}=\{0,1,2,3,4,5\}$.

			At first glance, this might seem rather complicated. However, it is only important to note three things: first, the $\alpha$-coefficients are invariant under permutation of the indices whereas the $\beta$-coefficients are not. Second, we will not work with the $r$-coefficients, we just need to know that the $r$-coefficients can be chosen to depend on $a,b,c,e,f$ only. In particular, in the following we choose
			\begin{align*}
				\alpha_{145}=a,\quad \alpha_{245}=b,\quad\alpha_{345}=c,\quad\beta_{40}=e,\quad\beta_{50}=f,\quad\alpha_{123}=e+f-a-b-c=s \, .
			\end{align*}
			Third, in this case the $\textit{Fp}$ and $\textit{Fn}$ functions correspond to ${}_{3}F_{2}$ functions depending on sums and differences of $a,b,c,e,f$ and
			\begin{equation*}
				\textit{Fp}(0;4,5)=\frac{1}{\Gamma(s)\Gamma(e)\Gamma(f)}\FFz{a,b,c}{e,f}{1} \, .
			\end{equation*}
			Whipple \cite[Tab. I, IIA, IIB]{whipple} provides tables with all $\alpha$- and $\beta$-coefficients in terms of $a,b,c,e,f$ and some examples of explicit $\textit{Fp}$ and $\textit{Fn}$ functions. The achievement of Whipple was to find relations between $\textit{Fp}$ and $\textit{Fn}$ functions of different parameters.

			\paragraph{\textit{General relations}}\

			The first observation that we need is that, in fact, the parameter $u$ in \eqref{eq:Fp} divides the 60 possible $\textit{Fp}$ functions into 5 types. It turns out that given $u\in\{0,1,2,3,4,5\}$, all ten $\textit{Fp}(u;v,w)$ functions coincide. For example, if $u=0$, this follows from \eqref{eq:rewritingT2}, \cite[(2) on p. 15]{bailey} and using the permutation invariance of ${}_{3}F_{2}$ functions with respect to the parameters $a,b,c$ resp.\ $e,f$.

			The same is true for the $\textit{Fn}$ functions, therefore, we can write $\textit{Fp}(u;v,w):=\textit{Fp}(u)$ and ${\textit{Fn}(u;v,w):=\textit{Fn}(u)}$ for all $v,w\in\{0,1,2,3,4,5\}\setminus\{u\}$. This allows us to easily obtain new identities for ${}_{3}F_{2}$ functions by identifying two ${}_{3}F_{2}$ functions as the same $\textit{Fp}(u)$ resp.\ $\textit{Fn}(u)$ function. For example, in this notion \eqref{eq:rewritingT2} is equivalent to
			\[\textit{Fp}(0;4,5)=\textit{Fp}(0)=\textit{Fp}(0;1,4) \, .\]
			As we will see in Section \ref{sec:rewritingT1}, rewriting \eqref{eq:tsumhard} requires identities between two $\textit{Fp}(u)$ and $\textit{Fp}(\widetilde{u})$ functions for $u\neq\widetilde{u}$. Whipple \cite[Sec. 8]{whipple} provides ``three-term relations'', i.e.\ relations between three $\textit{Fp}$ or $\textit{Fn}$ functions. Since we are only interested in the case of integer parameters $a,b,c,e,f$, these three-term relations simplify to two-term relations \cite[Sec. 5]{whipple}.

			\paragraph{\textit{Integer case}}\

			Whipple (and Bailey) treat the case of only $c$ being a negative integer, whereas in our case all parameters $a,b,c,e,f$ are integers. Raynal \cite[Sec. 2-5]{raynal78} considers this case. In our notion, the identity that we need to finish the proof of Lemma \ref{lem:tSum} is the following (see \cite[(46)]{raynal78})
			\begin{equation}\label{eq:raynal}
				Rp(0)\textit{Fp}(0)=(-1)^{\beta_{05}-1}Rp(5)\textit{Fp}(5)
			\end{equation}
			where
			\begin{equation*}
				R_p(0)^2=\frac{\Gamma(\alpha_{123})\Gamma(\alpha_{124})\Gamma(\alpha_{125})\Gamma(\alpha_{134})\Gamma(\alpha_{135})\Gamma(\alpha_{234})\Gamma(\alpha_{235})}{\Gamma(\alpha_{012})\Gamma(\alpha_{013})\Gamma(\alpha_{023})}
			\end{equation*}
			and
			\begin{equation*}
				R_p(5)^2=\frac{\Gamma(\alpha_{012})\Gamma(\alpha_{013})\Gamma(\alpha_{023})\Gamma(\alpha_{123})\Gamma(\alpha_{124})\Gamma(\alpha_{134})\Gamma(\alpha_{234})}{\Gamma(\alpha_{125})\Gamma(\alpha_{135})\Gamma(\alpha_{235})} \, .
			\end{equation*}
			In general, $Rp(u)^2$ is given by the product of all $\Gamma(\alpha_{xvw})$ for $x,v,w\neq u$ resp.\ $\Gamma(1-\alpha_{xvw})^{-1}$ if $\alpha_{xvw}$ is a negative integer or zero. Note that $1-\alpha_{xvw}=\alpha_{qst}$ for ${q,s,t}\in\{0,1,2,3,4,5\}\setminus\{x,v,w\}$.

			\subsubsection{Step 4: Rewriting \eqref{eq:tsumhard}}\label{sec:rewritingT1}\

			Set $a=-s$, $b=-d-m+n$, $c=-d$, $e=1-d+m-s$ and $f=1-d+n-s$. Using the notation established in the previous section we recognize the hypergeometric function on the right-hand side of \eqref{eq:tsumhard} as $\textit{Fp}(0;4,5)$ and the hypergeometric function on the right-hand side of~\eqref{eq:tsum} as $\textit{Fp}(5;3,4)$. We can relate those functions by
			\begin{equation*}
				\textit{Fp}(0;4,5)=\textit{Fp}(0)\overset{\eqref{eq:raynal}}{=}(-1)^{d-n+s}\frac{Rp(5)}{Rp(0)}\textit{Fp}(5)=(-1)^{\beta_{05}-1}\frac{Rp(5)}{Rp(0)}\textit{Fp}(5;3,4) \, .
			\end{equation*}
			Plugging in the definitions proves \eqref{eq:tsum} for $n\geq d+s$, and thus Lemma \ref{lem:tSum}.

			\section*{Acknowledgements}

			The author thanks Michael Heins and Oliver Roth for countless helpful and inspiring discussions.

		\end{document}